\documentclass[hidelinks,onefignum,onetabnum]{siamart251216}

\usepackage{lipsum}
\usepackage{amsfonts}
\usepackage{graphicx}
\usepackage{epstopdf}
\usepackage{booktabs,array}
\usepackage{algorithmic}
\ifpdf
  \DeclareGraphicsExtensions{.eps,.pdf,.png,.jpg}
\else
  \DeclareGraphicsExtensions{.eps}
\fi

\newsiamremark{remark}{Remark}
\newsiamremark{conjecture}{Conjecture}
\newsiamremark{hypothesis}{Hypothesis}
\crefname{hypothesis}{Hypothesis}{Hypotheses}
\newsiamthm{claim}{Claim}
\newsiamremark{fact}{Fact}
\crefname{fact}{Fact}{Facts}

\headers{p-Laplacian diffusion}{W. Zhang, H. Asgaribakhtiari, A. Pawar and R. Parshad}

\title{Novel Dynamics in Models of Angiogenesis with p-Laplacian diffusion \thanks{Submitted to the editors 08/07/2026.
\funding{This work was partially funded by National Science Foundation (Grant No. DMS-2533961)}}}

\author{Wenbo Zhang\thanks{Department of Mathematics, Iowa State University, Ames, IA 50011 USA (\email{wbzhang@iastate.edu}).}
\and Hossein Asgaribakhtiari\thanks{Department of Mechanical Engineering, Iowa State University, Ames, IA 50011 USA
  (\email{asgari@iastate.edu}).}
  \and Aishwarya Pawar\thanks{Department of Mechanical Engineering, Iowa State University, Ames, IA 50011 USA
  (\email{arpawar@iastate.edu}).}
\and Rana D. Parshad\thanks{Department of Mathematics, Iowa State University, Ames, IA 50011
  USA
  (\email{rparshad@iastate.edu}).}
}
\usepackage{amsopn}

\ifpdf
\hypersetup{
  pdftitle={Novel Dynamics in Models of Angiogenesis with p-Laplacian diffusion},
  pdfauthor={Wenbo Zhang, Hossein Asgaribakhtiari, Aishwarya Pawar and Rana D. Parshad}
}

\begin{document}

\maketitle

\begin{abstract}
Ischemic heart diseases represent the leading cause of  mortality worldwide. Revascularization, the process to restore blood flow in blockages, shows promise. To this end, mathematical models for angiogenesis, the process by which new blood vessels form from existing ones, have been extremely well investigated. In the current work, we consider a classical two species model for angiogenesis, consisting of cell and VEGF populations. However, we assume the cells move according to p-Laplacian diffusion, which could be both ``fast" ($1<p<2$) and ``slow" ($p>2$), in addition to normal diffusion ($p=2$). We first show that the system is well posed in a weak sense when $p>\frac{3}{2}$, for sufficiently small initial data. Next, we show that the p-Laplacian can lead to several novel dynamics not reported earlier; these include increased cellular proliferation via bi-modal and multi spike solutions, gain of regularity, prevention of finite time blow-up, cell depletion via finite time extinction, and Turing patterns. We discuss applications of these results for cardiac health via a digital twins framework.
\end{abstract}

\begin{keywords}
angiogenesis, p-Laplacian, weak solution, signal dependent chemotaxis, cardiac health
\end{keywords}

\begin{MSCcodes}
35D30, 35K92, 92C17, 92C50
\end{MSCcodes}

\section{Introduction}\label{intro}
Angiogenesis is the process of the formation of new capillaries from pre-existing vasculature, essential for several physiological processes such as wound healing, embryonic development, and tissue regeneration. While revascularization shows promise as an effective therapeutic method for diseases such as ischemic heart diseases \cite{shepherd2004rapid}, the methods lack the predictability and control required for effective vascular formation. Therapeutic angiogenesis can lead to immature or leaky vessels that lack perfusion capacity for effective tissue repair \cite{agrawal2025role}. The precise control of spatiotemporal vasculature outgrowth is challenging and can lead to maladaptive remodeling \cite{santamaria2020remodeling}. During the sprouting of blood vessels, a specialized subset of endothelial cells known as tip cells is driven by the gradients of pro-angiogenic factors, such as vascular endothelial growth factor (VEGF), followed by stalk cells that extend the sprouting vessel for new vessel formation. Such processes are best modeled via partial differential equations (PDE), with chemotaxis.

Chemotaxis models of cell migration originate in the study of aggregation phenomena such as slime mold formation \cite{keller1970initiation}, and have since been adapted to angiogenesis, the growth of new vessels from an existing vasculature.
The setting that has received the most attention is tumor-induced angiogenesis. In the avascular phase a tumor is sustained by diffusion of
oxygen and nutrients alone; once this supply becomes limiting, hypoxic cells secrete angiogenic factors (most prominently VEGF) which diffuse through the extracellular matrix and reach the endothelial cells (ECs) of nearby venules. The ECs respond by
breaking down the basal lamina and advancing along the resulting chemical gradient, forming sprouts that subsequently interconnect into a capillary
network. This process, and its therapeutic interruption, have motivated an extensive mathematical literature \cite{othmer2004jmb,lankeit2023review}. A second setting calling for angiogenesis modeling is the ischemic heart disease, in which atherosclerotic obstruction of the coronary
vasculature limits myocardial perfusion and which remains a leading cause of mortality worldwide. Due to endogenous hypoxia (VEGF response is rarely
sufficient to restore perfusion), and the fact that many patients are unsuitable for revascularization, therapeutic angiogenesis \cite{simons2003therapeutic} (the exogenous stimulation of collateral vessel formation), has therefore been pursued as an alternative, which motivates quantitative descriptions of how endothelial cells translate chemical cues into vascular architecture.

Keller and Segel \cite{keller1970initiation,KELLER1971,keller1971traveling} were the first to investigate the mathematics behind slime mold aggregation, modeled via two coupled PDE's: 
\begin{equation}\label{ks_model}
  P_t = \nabla\cdot\bigl(D_P\nabla P - \chi\,P\,\nabla W\bigr),
  \qquad
  W_t = D_W\Delta W + g(P,W).
\end{equation} and derived the conditions for pattern-forming instability. The competing forces of the dissipative flux $D_{P}\nabla P$ and the chemotactic drift $-\chi P\nabla W$, was first identified by Keller and Segel, and has since been intensely investigated. If the chemotaxis is sufficiently strong, finite time blow-up of solutions is possible,  \cite{winkler2013finite}, particularly in higher dimensions ($n\geq3$) under certain sufficient conditions of initial data. Rascle and Ziti \cite{rascle1995finite} constructed symmetric self-similar solutions to show that finite-time blow-up is determined by the relative strengths of chemotactic attraction, chemical consumption, and cell diffusion.

As myxobacteria move under food deprived conditions, they not only aggregate but leave slime trails after moving, and the slime may not diffuse. That is, they change environments and tend to move towards a direction of prior cell movement. Inspired by myxobacteria movement, Stevens and Othmer \cite{othmer} derived a PDE system \cref{othmer_model}, from random walkers of particles that obey micro statistical rules. They proposed the following model, motivated via reinforced random walkers,
\begin{equation}\label{othmer_model}
  P_t = D\,\nabla\cdot\Bigl(P\,\nabla\ln\frac{P}{\Phi(W)}\Bigr),
  \qquad
  W_t = F(P,W),
\end{equation}
different from the Keller-Segel system \ref{ks_model}, albeit with competition between diffusion and chemotaxis.

Sleeman and Levine \cite{levine1997siam} further exploded the behavior of \cref{othmer_model} under a specific setting (see equation (4.1) in \cite{levine1997siam})
\begin{equation}\label{levine_model}
  P_t = D\Bigg(P_{xx}+a\Big(P\frac{W_x}{W}\Big)_x\Bigg),
  \qquad
  W_t = \lambda PW-\mu W,
\end{equation}
and found families of exact solutions \cite{levine2001theory,levine1997siam} of the blow-up and collapse cases based on certain parameters of $\Phi(W)$ and $F(P,W)$. Among their findings, they explained the formation of shock-form aggregation which depleted the cells in the surrounding region (this stable bounded aggregation structure is what one expects biologically). Levine, Sleeman, Nilsen-Hamilton and their coauthers further proposed several complex and detailed biochemical models with multi-species and multi-chemicals \cite{levine2000Nilsen_Hamilton,levine2001jmb,levine2009hamilton,levine2001bulletin}.

In recent years, a structurally different route to the same biological problem has attracted attention - the phase-field method. Extensive regularity and simulation results via this method have been obtained. A typical way is to encode angiogenesis assumptions into energy and entropy functionals of cells and chemicals, satisfying physical rules such as second law of thermodynamics. Agosti and Signori \cite{agosti2024analysis} introduced a multi-species Cahn--Hilliard--Keller--Segel system that couples a diffuse-interface description of the tumour phases to a Keller--Segel subsystem for chemotaxis and angiogenesis and establish weak
well-posedness for nonlinear potentials and some regularity and continuous dependence results. For tumor-induced angiogenesis, Vilanova, Colominas and Gomez \cite{vilanova2017mathematical} incorporated the key role played by filopodia during angiogenesis and the regression process of vascular systems through tumor angiogenic factors based on phase field method. Xu et al. employed phase field for developing a mesh-free method to study 3D angiogenesis with realistic model parameters obtained based on the information from image data \cite{xu2020phase}.

There is much motivation for non-linear or density dependent dispersal in the physical and life sciences \cite{stinga2023fractional, vazquez2006porous, antontsev2015evolution}. Among these are two popular models, the p-laplacian equation \cite{antontsev2015evolution} and the porous medium equation \cite{vazquez2006porous}. These models possess finite speed of propagation, as opposed to the infinite propagation speed of standard diffusion, making them realistic for several applications.
Various literature investigates p-Laplacian diffusion in place of Fickian diffusion. That is, the diffusive flux also depends on the gradient of cells (i.e. $|\nabla P|^{p-2}$). The case $1<p<2$ is of ``faster" diffusion and  ``slower" movement of the cells  is captured when $p>2$ - the general $p$-Laplacian permits both these dynamics \cite{antontsev2015evolution}.
This density-dependent flux resembles the volume-filling effect \cite{painter2002volume}, a standard way to prevent overcrowding (multiplying the motility and chemotatic sensitivity by a factor $q(P) = 1-P/P_{max}$ to switch the flux when approaching density saturation). The p-Laplacian diffusion naturally reproduces this mechanism: when $p>2$, $|\nabla P|^{p-2}\nabla P$) in the flux prevents an abrupt aggregation when $|\nabla P|$ is large. 
The effect of dispersal modeled via a $p$-Laplacian operator has been well investigated \cite{wang2023global, yang2022global, cong2016degenerate, rani2024global, li2020global, dewhirst2009dispersal}. The problem is degenerate and the analysis herein is more difficult than the case of linear diffusion \cite{jungel2010diffusive}. However, there is much motivation for non-linear dispersal in biology, \cite{chapman2015long, stinga2023fractional, song2019spatiotemporal, jungel2010diffusive}, as well as in physics and engineering sciences, as well \cite{bonforte2024cauchy, stinga2023fractional}. Two  popular methods to model non-linear dispersal (locally) are the porous medium equation \cite{vazquez2006porous} and the non-Newtonian filtration equation via the $p$-Laplacian operator \cite{antontsev2015evolution}. Herein, the ``type" or qualitative nature of the PDE changes or ``degenerates" in certain parts of the domain or certain parameteric regimes. These equations possess considerably greater mathematical challenges than the standard Laplacian counterpart - but can yield richer dynamics, such as finite time extinction (FTE) \cite{antontsev2015evolution}. Essentially, the porous medium equation degenerates when the solution $P \approx 0$, whilst the non-Newtonian filtration equation degenerates when $\nabla P \approx 0$ - yielding very different mathematical techniques for their individual analysis. 

The effect of dispersal via the $p$-Laplacian operator has recently been considered in chemotaxis systems, \cite{wang2023global}, as well as Lokta-Volterra (LV) type competition systems, \cite{yang2022global, rani2024global, upadhyay2026eco}. Kong and Liu  considered the Cauchy problem for the case of the $p$-Laplacian Keller-Segel model in spatial dimension $n\geq 3$, \cite{cong2016degenerate}, and under small data assumptions, prove the existence of global weak solutions. Certain decay estimates for the solution, as well as the extinction of the solution in finite time, have been established. Wang considered the chemotaxis-hapotaxis model with degradation of the chemical signal and $p$-Laplacian motion, \cite{wang2023global}. Herein, global existence of a weak solution is proved for $p> 1 + f(n)$, where $f$ is a general function that depends on the spatial dimension $n$, and $f \rightarrow 2$ as $n \rightarrow \infty$. Li considered an attraction-repulsion chemotaxis problem with  $p$-Laplacian, \cite{li2020global}, and proved global existence of weak solutions under certain parametric restrictions in the fast case ($1<p<2$), and for any parametric ranges in the slow case $(p>2)$. The case of slow diffusion for a system of two competitors both moving towards a chemical signal was first considered by Yang et al., \cite{yang2022global}, wherein a globally bounded weak solution was proved for any bounded initial data and any positive range of parameters. This was extended to the fast diffusion case in \cite{rani2024global}, where again globally bounded weak solutions were proved for the $1<p<\frac{3}{2}$ case, for any positive data, and for small positive data in the $\frac{3}{2}<p<2$ case. Recently, Zhuang et. al. considered the case of signal dependent sensitivity, with p-laplacian diffusion \cite{zhuang2021global}. The show global existence of weak solution for any initial data, for $p>p^{*}(n)$, where for $n=2$, $p^{*} = \frac{13}{6}$. Thus their result (in $n=2$) is not relevant to the case when $1<p<2$, or the ``fast" diffusion case.

In general, with chemotaxis-type mechanisms, the higher spatial dimension problem $(n\geq 3)$ becomes very challenging and finite time blow-up is possible, even with superlinear damping terms, \cite{winkler2018finite}. Thus, we expect only bounded weak solutions.

We consider p-Laplacian diffusion in \cref{levine_model} and posit our basic model,
\begin{equation}\label{base_model_0}
\begin{cases}
\displaystyle
P_t = D\,\frac{\partial}{\partial x}\!
      \Bigl[\bigl(|P_x|^{p-2}+\theta\bigr)P_x
            - P \frac{\partial}{\partial x}ln\Phi(W) \Bigr],
\\[5pt]
\displaystyle
W_t = \Bigl(\frac{P}{1+\nu W}-\mu\Bigr)W.
\end{cases}
\end{equation}
We find when $1<p<2$, the cell and chemical kinetics evolve cell density towards aggregation, that can cause cells drive aggregation areas to extinction. The aggregation is observed to be bimodal or multi modal, a dynamic different from  existing models, to the best of our knowledge. When $p>2$, the introduction of $|\nabla P|^{p-2}$ models a  ``slower" diffusion process. Biologically, we can consider this behavior as a ``frozen" effect of cell density, as $P(x,t)$ maintains this behavior for long time and can overcome diffusive dampening (See simulations in \cref{simulations}).
 We also add a logistic term to $P$, and regular diffusion to $W$, to obtain \cref{eq:logistic} in \cref{model_sim}. Energy estimates and analysis are performed on \cref{eq:logistic} in \cref{energy}. We prove that there exists globally bounded weak solutions for  \cref{eq:logistic} for $3/2<p<2$ and $\theta=0$. We also perform linear analysis near steady states of \cref{eq:logistic} to obtain instability conditions of pattern-forming in \cref{pattern}. We find patterns have enhanced connected forms with quick saturation when $p>2$, and patterns start to emerge only when noise amplitude exceeds a critical value, leading to very similar bimodal forms.

The manuscript is structured as follows. \cref{intro} gives the biological background and motivation of this work; \cref{model_sim} describes the investigated equations and gives the simulation results both in 1D and 2D, which drives our investigation; \cref{energy} gives our recent regularity results; \cref{pattern} explores pattern-forming behavior  based on instability conditions from linear analysis; \cref{discussions} gives our  conclusions, discussions and describes future work.

\section{Model descriptions and Simulation}\label{model_sim}
\subsection{Model system}
The main model we present is aimed to further investigate aggregation behavior, based on Stevens and Othmer \cite{othmer}, and Sleeman and Levine \cite{levine1997siam}. The shock-form aggregation they found, which depletes mass near the aggregation area,  drives part of our current research. Sleeman and Levine \cite{levine1997siam} derived a family of classical solutions to designate between the finite-time blow up behavior and the collapsing of solution behavior. They analyzed the shock-form aggregation as a transformation from blow up case to collapse case, similar to analysis in hyperbolic PDE. Biologically speaking, the shock-form aggregation is  meaningful, as it leads to strong and stable aggregation. Besides the competition between chemotaxis effect and (Fickian) diffusion dampening, we incorporate p-Laplacian diffusion and find that p-Laplacian diffusion leads to very different dynamics than any other reported, to our best of knowledge - in particular the shock form that occurs with regular diffusion, can be dampened with slow diffusion. Here are governing equations posed on $\Omega \subset \mathbb{R}^n$
\begin{equation}\label{eq:base_model}
\begin{cases}
P_t = D\,\nabla\cdot\Big[(|\nabla P|^{p-2}+\theta)\nabla P - P\,\dfrac{\delta}{(W+\beta)(W+\gamma)}\,\nabla W\Big],\\[6pt]
W_t = \Big(\dfrac{P}{1+\nu W}-\mu\Big)W,
\end{cases}
\end{equation}
where $\nu=1/\lambda$, $\delta/[(W+\beta)(W+\gamma)] = \Phi'(W)/\Phi(W)$
with $\Phi(W)=[(W+\beta)/(W+\gamma)]^{a}$, and all parameters are positive.
And apply zero-flux (homogeneous Neumann) boundary conditions on $P$ upon $\partial\Omega$:
\begin{equation}
\Big[(|\nabla P|^{p-2}+\theta)\nabla P - P\,\frac{\delta}{(W+\beta)(W+\gamma)}\,\nabla W\Big]\cdot\nu = 0.
\label{base_boundary}
\end{equation}
Because $W$ satisfies a pointwise ODE (no spatial derivatives), no boundary condition is required for $W$.
In addition, we introduce a density-Dependent survival modulation on \cref{eq:base_model} (Take one spatial dimension here upon $[0,L]$)
\begin{equation}\label{eq:logistic}
\begin{cases}
\displaystyle
P_t = D\,\frac{\partial}{\partial x}\!
      \Bigl[\bigl(|P_x|^{p-2}+\theta\bigr)P_x
            - P\,\frac{\delta}{(W+\beta)(W+\gamma)}\,W_x\Bigr]
      + rP\!\left(1-\frac{P}{K}\right),
\\[10pt]
\displaystyle
W_t = D_{1}W_{xx} + \Bigl(\frac{P}{1+\nu W}-\mu\Bigr)W,
\end{cases}
\end{equation}
where $r>0$ is the survival--apoptosis modulation rate and $K>0$ is
the density threshold at which survival and apoptosis balance. And zero flux boundary conditions are applied on $P$ (i.e. \cref{base_boundary}) and $W$, where $[W_x]_{x=0,\,L} = 0$.
Compared with the base model, a logistic source term
$rP(1-P/K)$ is added to the $P$-equation,
which breaks mass conservation: the total cell mass
$\int_0^L P\,dx$ now satisfies
\[
  \frac{d}{dt}\int_0^L P\,dx
  = \underbrace{\bigl[\text{flux}\bigr]_0^L}_{=\,0}
    + \int_0^L rP\!\Bigl(1-\frac{P}{K}\Bigr)dx.
\]
Also, Fickian diffusion $D_1 W_{xx}$ is incorporated, as there is much biological motivation for diffusion in the literature for VEGF.

\subsection{Cellular Aggregation with p-Laplacian Diffusion}\label{simulations}
In this section, we show 1D and 2D simulations of model \eqref{eq:base_model} and the model with logistic control \eqref{eq:logistic}. Rich dynamics, such as bimodal aggregation ($1<p<2$) and triangular aggregation ($p>2$), are found in these models, which motivates our investigation. \cref{fig:base_p2} ($p=2$) reproduces the shock form from Levine and Sleeman \cite{levine1997siam}. \cref{fig:base_p3} ($p>2$) shows that the aggregation with slow diffusion does not lead to strong celluar aggregation (that is the aggregation is fairly slow). \cref{fig:base_p15} ($p<2$) shows the bimodal form that preserves the property that locally there is depletion of cells. Before reaching the final profile, we see a local minimum emerges between the two peaks but then disappears due to a mechanism that warrants further investigation. We conjecture this bimodal behavior (or even through multi-modal) is strongly related to initial conditions and the form of the source term in the equation for the chemical $W$. It is also observed that adjusting $p$ has a strong impact on the behavior of our base model \cref{eq:base_model}: increasing $p$ changes the cellular aggregation from bimodal to shock-form, then to hump-form (shown in \cref{fig:base_p_scan}). \cref{fig:base_theta_scan} shows that varying normal diffusion $\theta$ also modulates the dynamics for $1<p<2$. As for our model \cref{eq:logistic} with logistic control of $P$, that is in the cell, \cref{fig:logistic_K_scan} 
illustrates that this logistic term can activate the celluar proliferation at positions of almost extinct cells (i.e. near boundaries when $p<2$ in \cref{fig:logistic_K_scan}). However, a strong logistic term with large carrying capacity $K$ can dampen the cell profiles to a nearly constant steady state of $P=K$, especially for $p\geq2$. We also numerically do experiments on the 2D compartments of our 1D simulations. And 2D simulations essentially align with our 1D tests. In addition, we perform experiments on a 2D cylinder to simulate angiogenesis on a capillary.

\begin{figure}[htbp]
  \centering
  \includegraphics[width=0.92\textwidth]{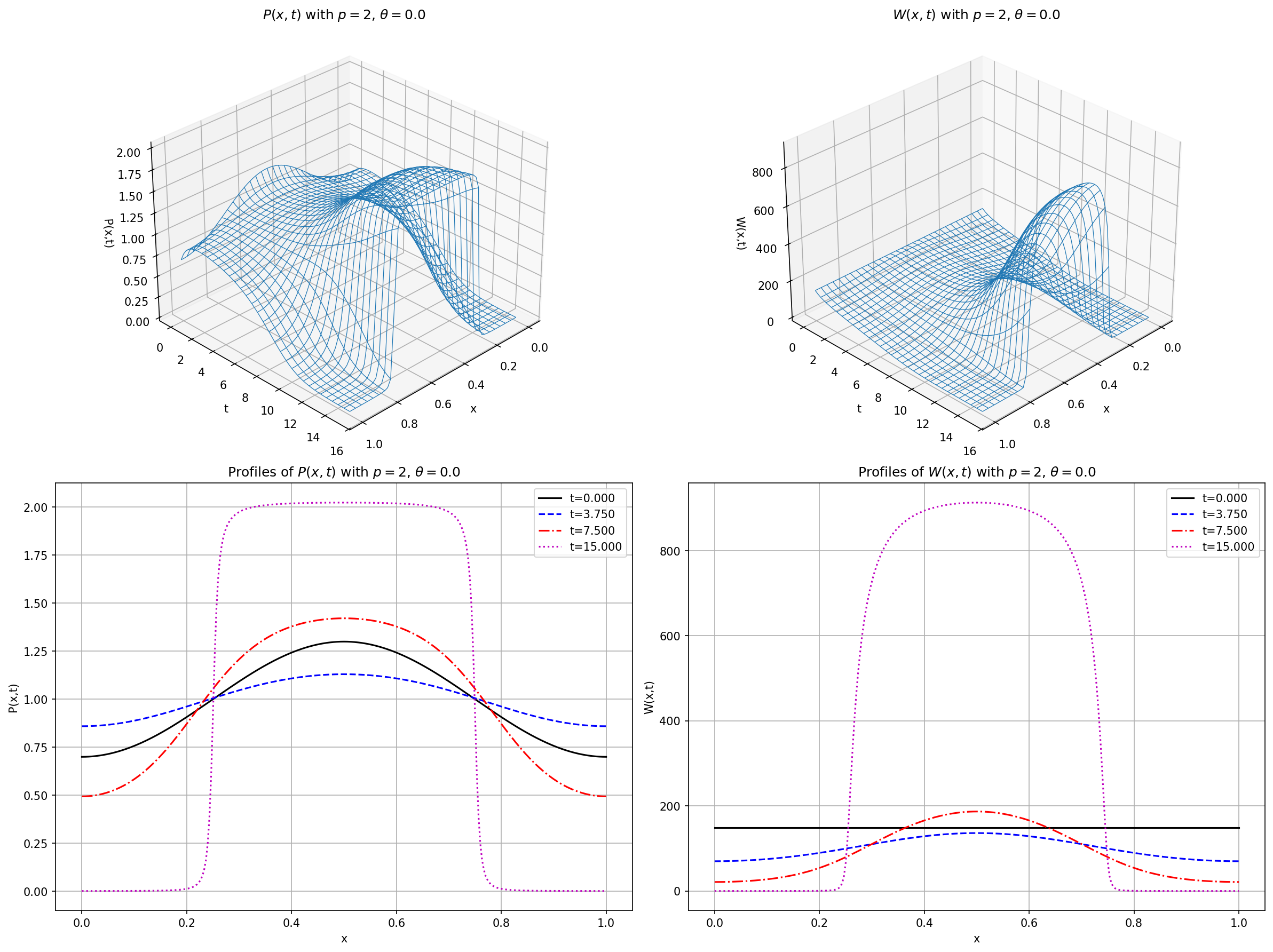}
  \caption{Base model with $p=2$, $D = 0.036$, $\mu = 1.0$, $\lambda = 10^{3}$, $\beta = 0.01$, $\gamma = 10^{2}$, $\delta = 10^{2}$, $L = 1.0$, $\theta=0$, $P(x,0)=1-0.3\cos(2\pi x)$, $W(x,0)=e^5$.
           \textit{Top row}: 3D surface plots of $P(x,t)$ (left) and $W(x,t)$ (right)
           over $x\in[0,1]$, $t\in[0,15]$.
           \textit{Bottom row}: spatial profiles at four representative times
           $t=0,\,3.75,\,7.5,\,15$.
           $P$ develops a sharp shock-like aggregation front that concentrates
           mass in the interior of $[0,1]$; $W$ forms a corresponding high-VEGF
           plateau co-localised with the cell aggregate.}
  \label{fig:base_p2}
\end{figure}

\begin{figure}[htbp]
  \centering
  \includegraphics[width=0.92\textwidth]{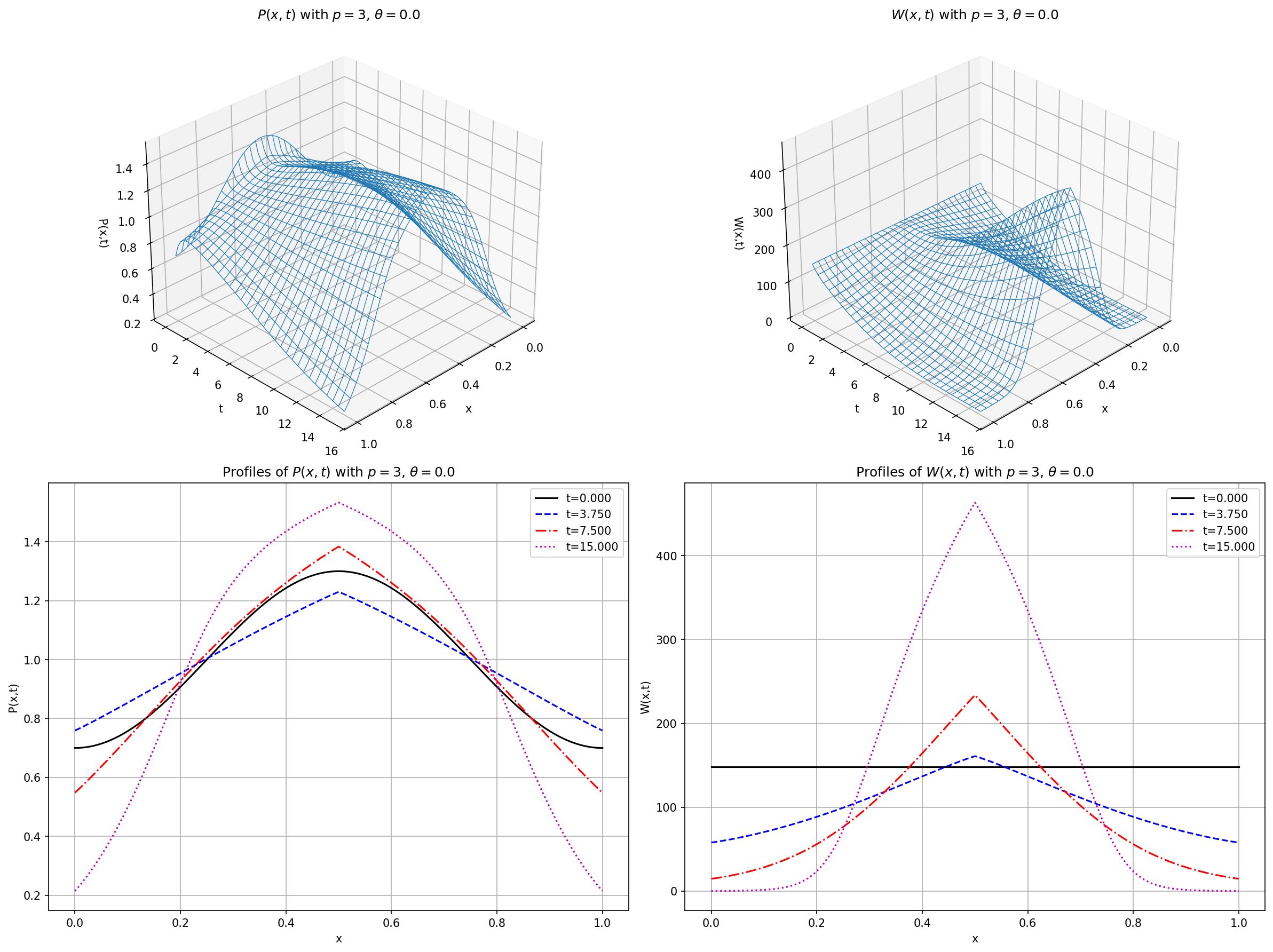}
  \caption{Base model with $p=3$ and other parameters consistent with \cref{fig:base_p2}.
         The stronger nonlinear diffusion ($p=3$) produces a single
         smooth aggregation peak in $P$ that is narrower and more
         concentrated than the $p=2$ shock front
         (Figure~\ref{fig:base_p2}), with the peak shifting towards
         $x\approx 0.5$ by $t=15$.
         Unlike the sharp step-like shock at $p=2$, the profile here
         decays smoothly to near zero at the boundaries.
         $W$ develops a single central maximum ($W\sim 450$ at $t=15$),
         substantially lower than the bimodal VEGF peaks seen at $p=1.5$
         (Figure~\ref{fig:base_p15}).}
  \label{fig:base_p3}
\end{figure}

\begin{figure}[htbp]
  \centering
  \includegraphics[width=0.92\textwidth]{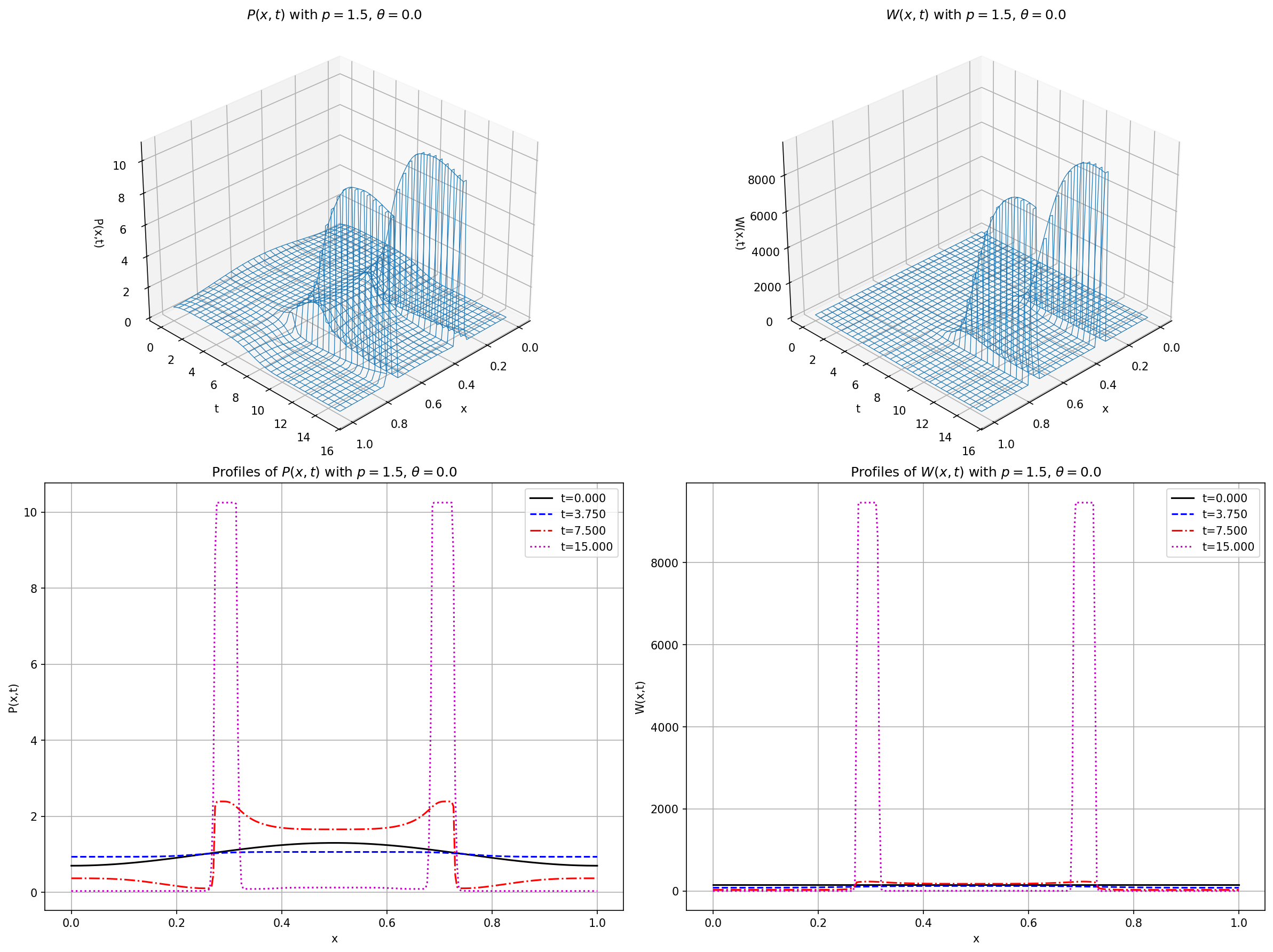}
  \caption{Base model with $p=1.5$ and other parameters consistent with \cref{fig:base_p2}.
           Sub-quadratic diffusion ($p<2$) gives rise to a
           \emph{bimodal} aggregation pattern: $P$ develops two sharp
           symmetric peaks near $x\approx 0.3$ and $x\approx 0.7$,
           with a depleted trough in between.
           The corresponding $W$ profile mirrors this structure,
           exhibiting two co-localised VEGF maxima of very large amplitude.
           This multi-peak behaviour is absent for $p\ge 2$ and is a
           distinctive feature of fast ($p<2$) $p$-Laplacian diffusion.}
  \label{fig:base_p15}
\end{figure}

\begin{figure*}[!t]
  \centering
  \includegraphics[width=\textwidth]{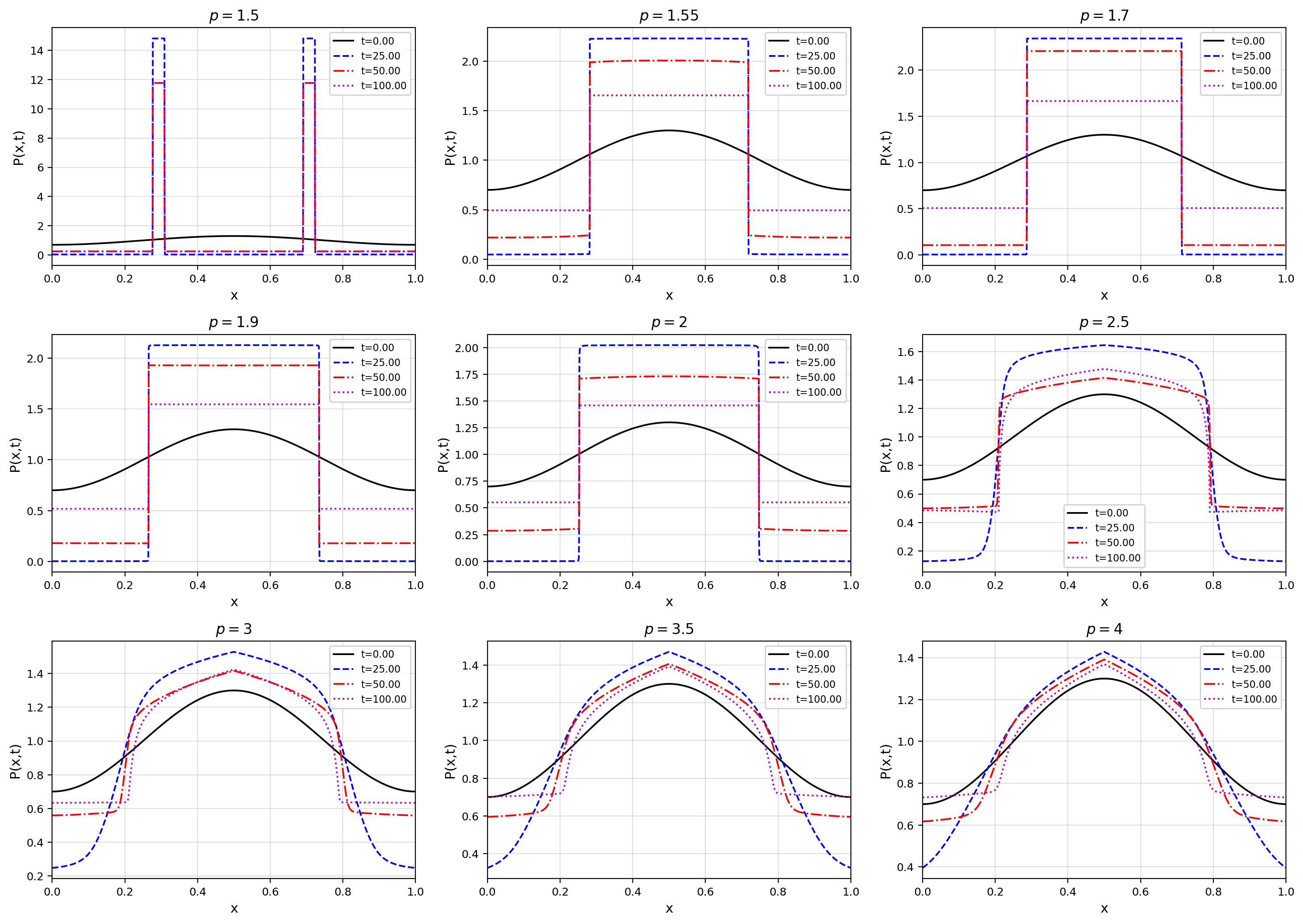}
  \caption{%
    Profiles of the cell density $P(x,t)$ at
    $t=0,\,25,\,50,\,100$, for nine values of the diffusion exponent $p$
    increasing from $p=1.5$ (top left) to $p=4$ (bottom right) and other parameters consistent with \cref{fig:base_p2}.
    The exponent controls the regularity of the aggregate.
    For $p\ge 3$ the initial cosine is amplified and mildly steepened but
    remains smooth for all times; at $p=2.5$ the profile flattens into a
    broad hump with steep shoulders.
    For $1.55\le p\le 2$ the solution collapses onto a plateau: the plateau height decays with time while the outer
    region refills, indicating slow relaxation towards the spatially
    homogeneous state for long time run.
    At $p=1.5$ this plateau is replaced by two narrow spikes pinned at the
    interface positions, attaining $P\approx 15$ at $t=25$ with near-total
    depletion elsewhere (even for long time run).
  }
  \label{fig:base_p_scan}
\end{figure*}

\begin{figure}[htbp]
  \centering
  \includegraphics[width=0.92\textwidth]{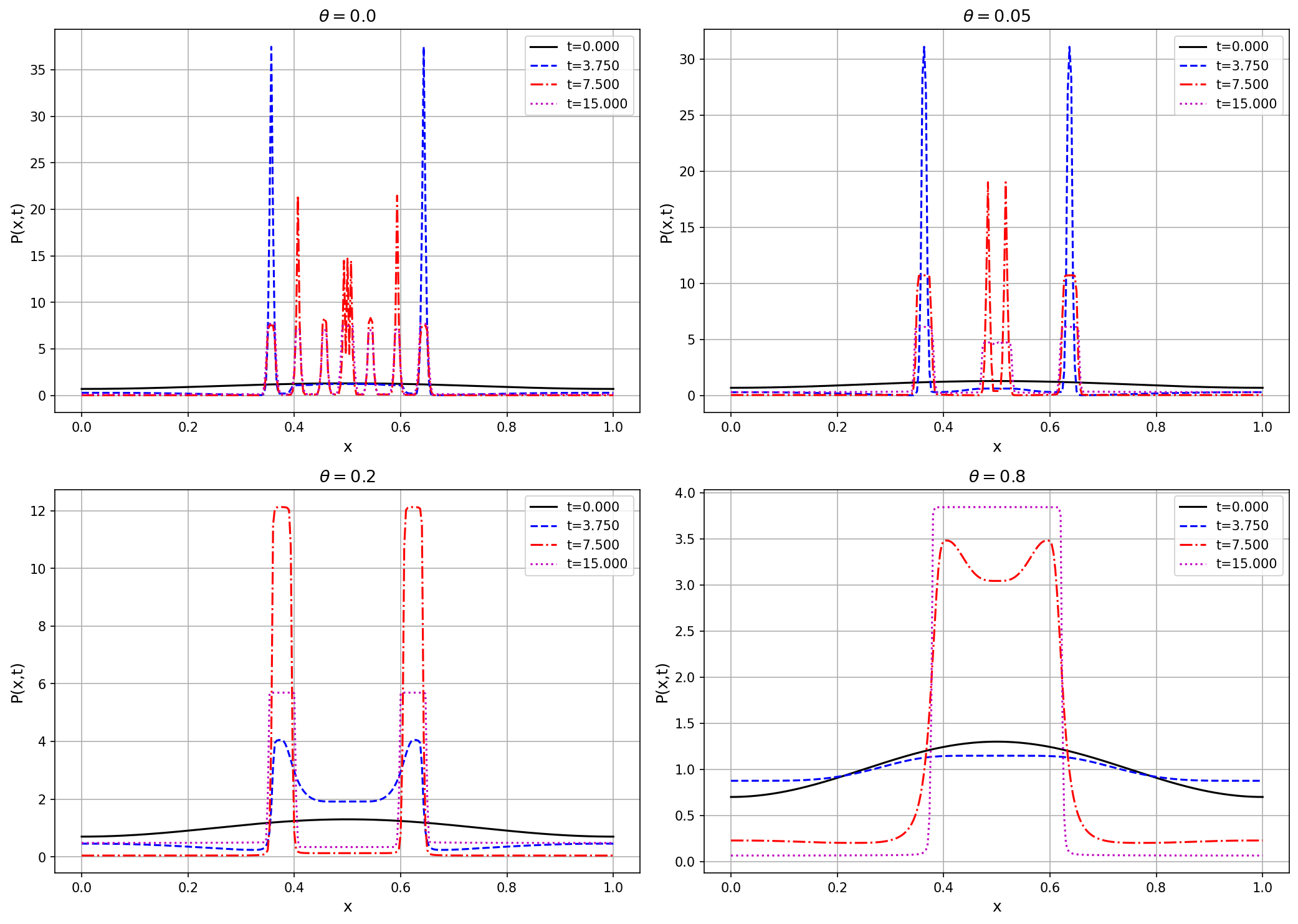}
  \caption{Profiles of the cell density $P(x, t)$: effect of increasing linear diffusion $\theta$ on the
           aggregation pattern for $p=1.5$, $W(x,0)=e^0=1$, and unmentioned parameters consistent with \cref{fig:base_p2}.
           Each panel shows $P(x,t)$ profiles at $t=0,\,3.75,\,7.5,\,15$.
           \textit{Top-left} ($\theta=0$): strongly multi-modal peaks with
           very high amplitude.
           \textit{Top-right} ($\theta=0.05$): peak heights are reduced but
           the bimodal structure persists.
           \textit{Bottom-left} ($\theta=0.2$): further damping; the two
           peaks narrow and their amplitude decreases markedly.
           \textit{Bottom-right} ($\theta=0.8$): a single broad aggregation
           peak replaces the two-peak structure, indicating that normal diffusion has a strong damping effect.}
  \label{fig:base_theta_scan}
\end{figure}

\begin{figure}[htbp]
  \centering
  \includegraphics[width=0.95\textwidth]{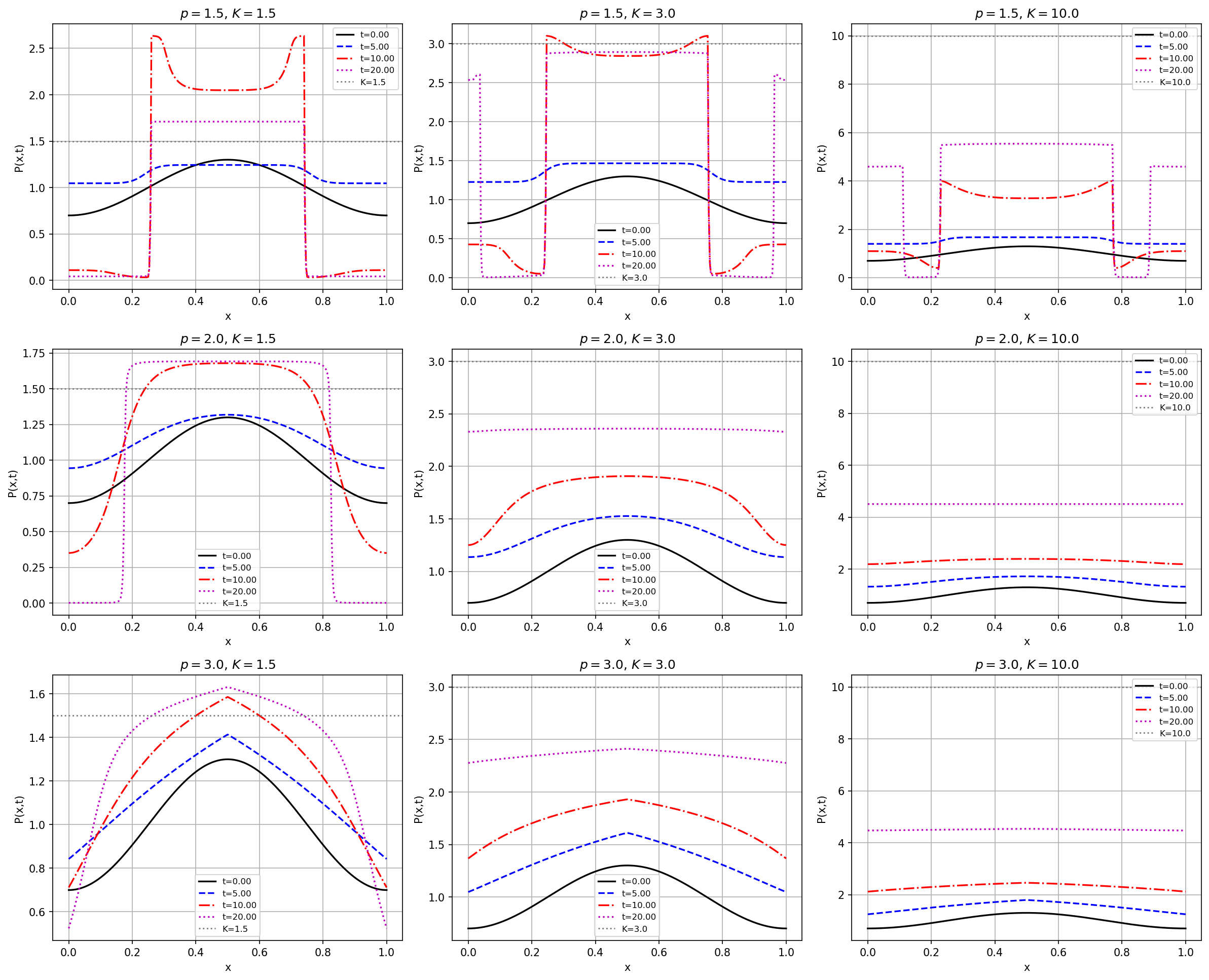}
  \caption{Profiles of $P$ of logistic survival modulation \cref{eq:logistic}:
           effect of the carrying capacity $K$, with $r=0.1$,
           $W_0=e^5$, $T=20$ and unmentioned parameters consistent with \cref{fig:base_p2}.
           Rows correspond to $p=1.5,\,2,\,3$; columns to
           $K=1.5,\,3,\,10$.
           For $p=1.5$ (top row) a small cap ($K=1.5$) suppresses the
           bimodal spikes to moderate amplitude; at $K=3$ two sharp
           peaks near the \textbf{boundaries} re-emerge by $t=10$ and then
           partially collapse; at $K=10$ the cap is more inactive and the
           peaks grow more freely, which also leads to aggregations near boundaries.
           For $p=2$ (middle row) all three panels show smooth profiles that grow toward $K$; the solutions gradually tend to be space-independent for $K=3,\ 10$.
           For $p=3$ (bottom row) profiles are unimodal and
           with triangular-form across all $K$ values, with different amplitude tracking
           $K$; the profiles also gradually tend to be space-independent toward $K$ for $K=3,\ 10$.}
  \label{fig:logistic_K_scan}
\end{figure}

The 2D solution is obtained on both Cartesian plane, and cylindrical artery-like geometry. \cref{fig:2d_b_p} shows that the aggregation dynamics in the 2D simulations exhibit similar behavior to the 1D simulations for normal diffusion ($p=2$) and slow diffusion ($p>2$). However, they diverge significantly for fast diffusion ($1<p<2$). For example, the sharp bimodal aggregation observed in 1D at $t=15$ is instead trapped into attenuated discrete clusters in 2D over the same time frame. This stems from the transverse gradient coupling of the $p$-Laplacian in 2D, which is absent in 1D. In 2D, the non-linear diffusion coefficient scales with $(P_x^2+P_y^2)^{(p-2)/2}$. Because $p<2$, this exponent is negative, and a steep gradient in the transverse $y$-direction diminishes the diffusion coefficient in the $x$-direction. Therefore, a steep gradient in any single direction forces the non-linear contribution of the diffusivity toward zero. This restricts mass transport in all directions and localizes the clusters. In contrast, the exponent $(p-2)/2$ vanishes for $p=2$, or remains positive for $p>2$. In these cases, the transverse gradient is neutralized, or is preserved to enhance local diffusion and mass transport. This results in more striking aggregation fronts depicted in \cref{fig:2d_b_p} for normal and fast diffusion, $p=2.0$ and $p=3.0$, respectively, by preventing mass from accumulating in local clusters.

The comparison of \cref{fig:2d_b_p} with \cref{fig:base_p2} reveals a decreased aggregation in higher spatial dimensions, where the 2D profile at $t=15$ resembles the 1D profile at $t=7.5$. The delay occurs because distributing the mass over a planar area yields lower initial peak concentrations than 1D compression. This reduced local density slows the localized evolution of the chemical field, which weakens the chemotactic drift.

\begin{figure}[!htbp]
\centering
\includegraphics[width=0.95\textwidth]{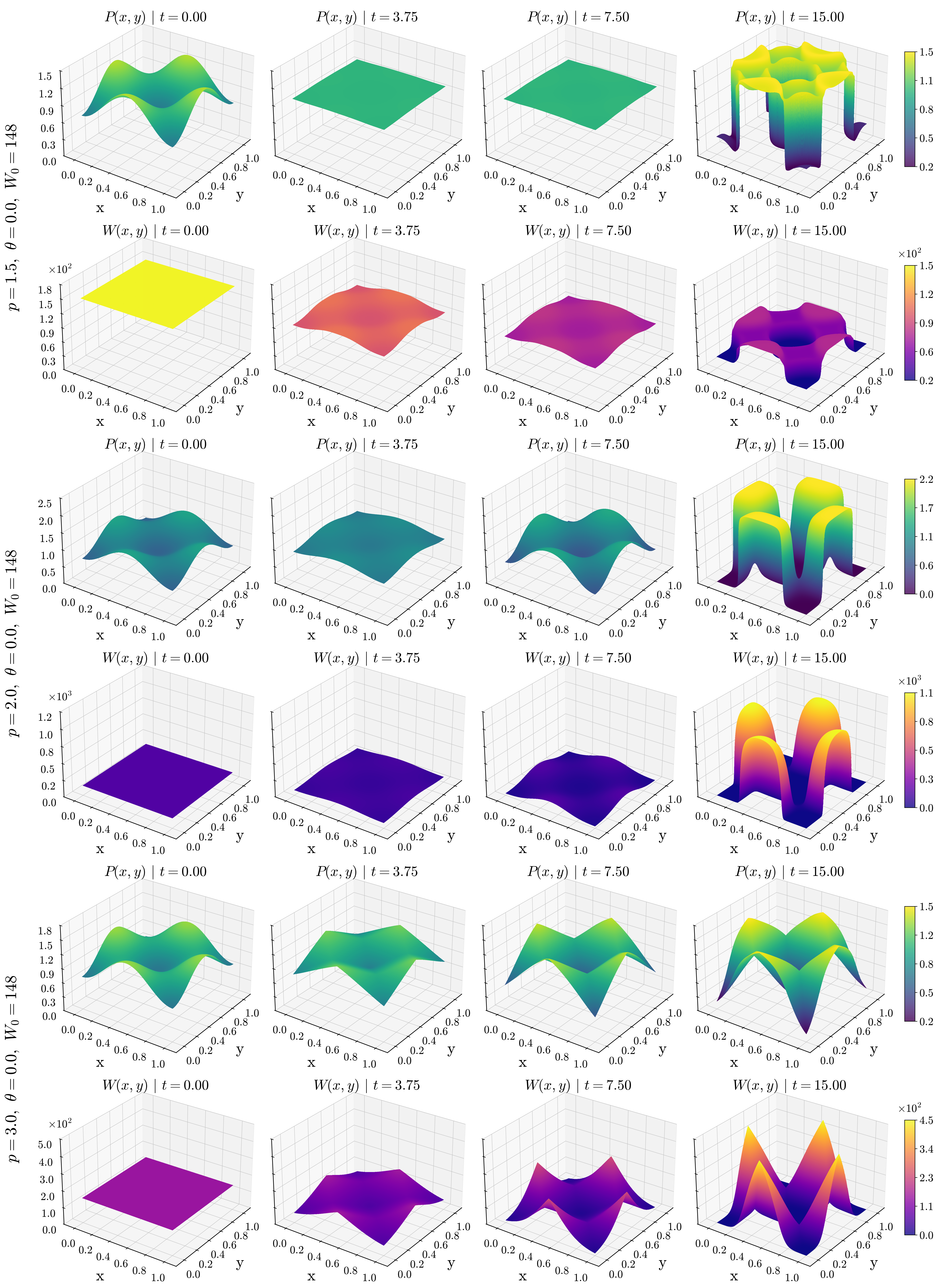}
\caption{Cell aggregation and VEGF concentration, $P(x,y,t)$ (top rows) and $W(x,y,t)$ (bottom rows), respectively, on a planar domain, organized into three blocks based on the diffusion regime. Base model is employed with $D = 0.036$, $\mu = 1.0$, $\lambda = 10^{3}$, $\beta = 0.01$, $\gamma = 10^{2}$, $\delta = 10^{2}$, $\theta=0$, $L = 1.0$, $P(x,y,0)=1.0-0.3\cos(2\pi x)\cos(2\pi y)$, $W(x,y,0)=e^5$ over $[0,1]\times[0,1]$, shown at four representative times of $t=0,\,3.75,\,7.5,\,15$. \textit{Upper block}: fast diffusion ($p=1.5$). \textit{Middle block}: normal diffusion ($p=2.0$). \textit{Lower block}: slow diffusion ($p=3.0$). Rather than forming continuous shock-like fronts, $P$ develops into symmetric clusters across the domain. In fast diffusion, transverse gradient localizes the mass into attenuated aggregates, with $W$ forming plateaus that co-localized with the cell aggregates.}
\label{fig:2d_b_p}
\end{figure}

Cylindrical topology shown in \cref{fig:2d_b_c} introduces periodic boundary conditions along the azimuthal axis ($\phi$) which significantly alters the aggregation dynamics from a planar model. The clusters evolved into interconnected structures on the cylinder. This mimics the initial budding and localized aggregation in vascular sprouting \cite{dudley2023pathological}. The azimuthal gradients ($\nabla \phi$) diminish the axial diffusivity, where the continuous bands of $P$ and $W$ fields are forced to pinch.

\begin{figure}[!htbp]
\centering
\includegraphics[width=0.95\textwidth]{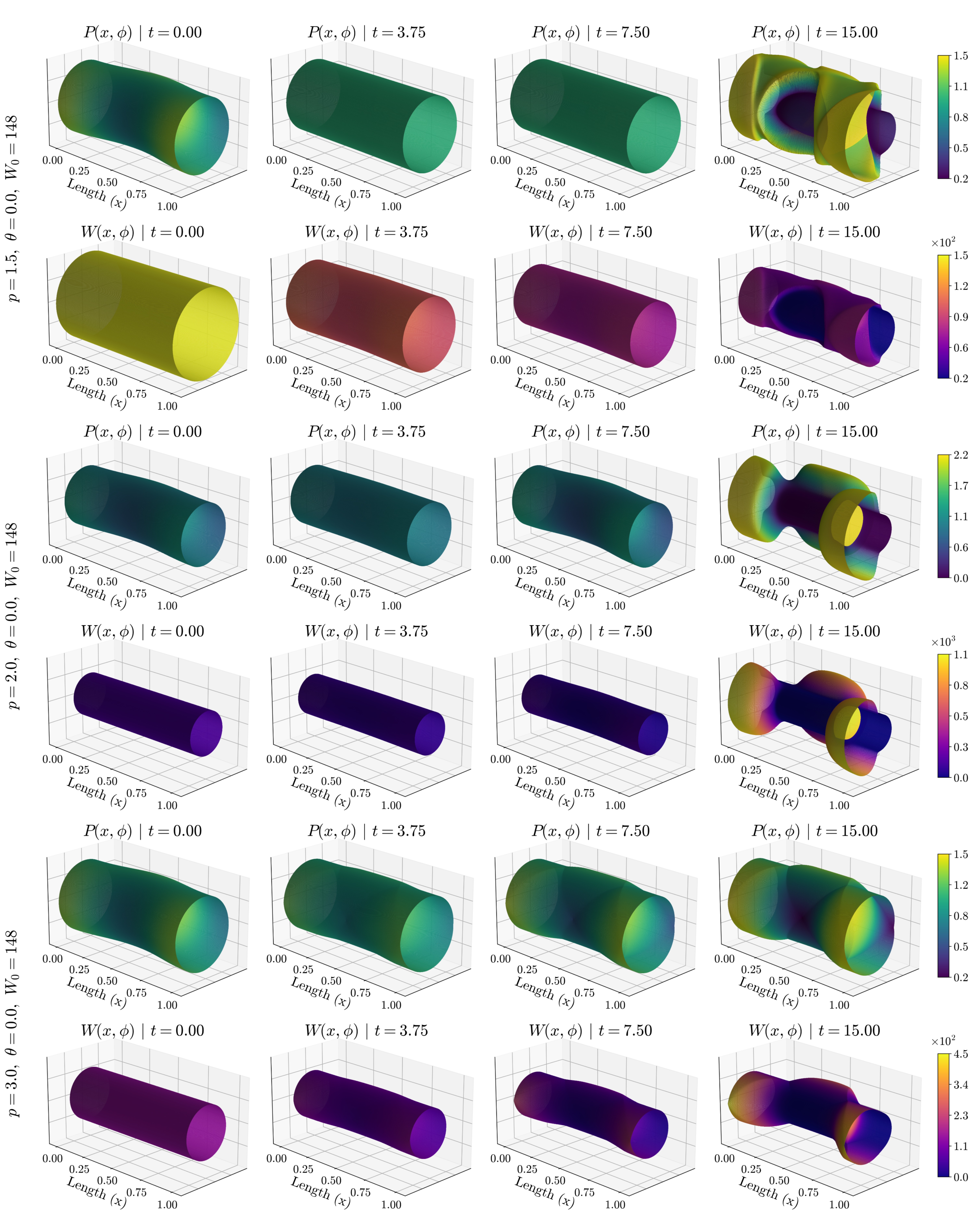}
\caption{Cell aggregation and VEGF concentration, $P(x,\phi,t)$ (top rows) and $W(x,\phi,t)$ (bottom rows), respectively, on a cylindrical domain representing a real blood vessel. The figure is organized into three blocks based on the diffusion regimes, fast ($p=1.5$, upper block), normal ($p=2.0$, middle block), and slow ($p=3.0$, lower block). Base model is employed and other parameters are consistent with \cref{fig:2d_b_c}. The concentrations are visualized as radial deformations from a baseline radius $R_0$, with initial conditions $P(x,\phi,0)=1.0-0.3\cos(2\pi x)\cos(\phi)$ and $W(x,\phi,0)=e^5$ over the cylindrical surface defined by axial length $x \in [0,1]$ and azimuthal angle $\phi \in [0,2\pi)$, shown at four representative times ($t=0,\,3.75,\,7.5,\,15$). Sprouting-like buds emerge along the vessel surface. Consistent with the planar geometry, $W$ develops VEGF plateaus that co-localized with the cell aggregates.}
\label{fig:2d_b_c}
\end{figure}

The volume-filling factor $q(P)=max(1-P/P_{max},0)$ is employed to modify the chemotaxis term in the base model to prevent infinite densities. \cref{fig:2d_vf_p} shows the coupling effect of $q(P)$ on cell aggregation and VEGF distributions on a planar domain, whereas \cref{fig:2d_vf_c} associates with the cylindrical domain. As the local cell density approaches the maximum admissible value, $P \rightarrow P_{max}$, the volume-filling factor approaches zero. And the chemo-tactic flux vanishes while random diffusion persists. This prevents blow-up at the flux level. For a much lower $W(x,y,0)$ than the previous cases, \cref{fig:2d_vf_p} and \cref{fig:2d_vf_c} show several sharp aggregation peaks. The impact of the $q(P)$ is most clearly seen in the fast diffusion regime ($p=1.5$, upper blocks), where several neighboring peaks in the form of clusters are plateaued as the cell density hits $P_{max}$. In contrast, the normal and slow diffusion regimes ($p \ge 2$) concentrate the mass into multi-modal peaks that have not yet reached the $q(P)$ threshold within the same time-frame.

\begin{figure}[!htbp]
\centering
\includegraphics[width=0.95\textwidth]{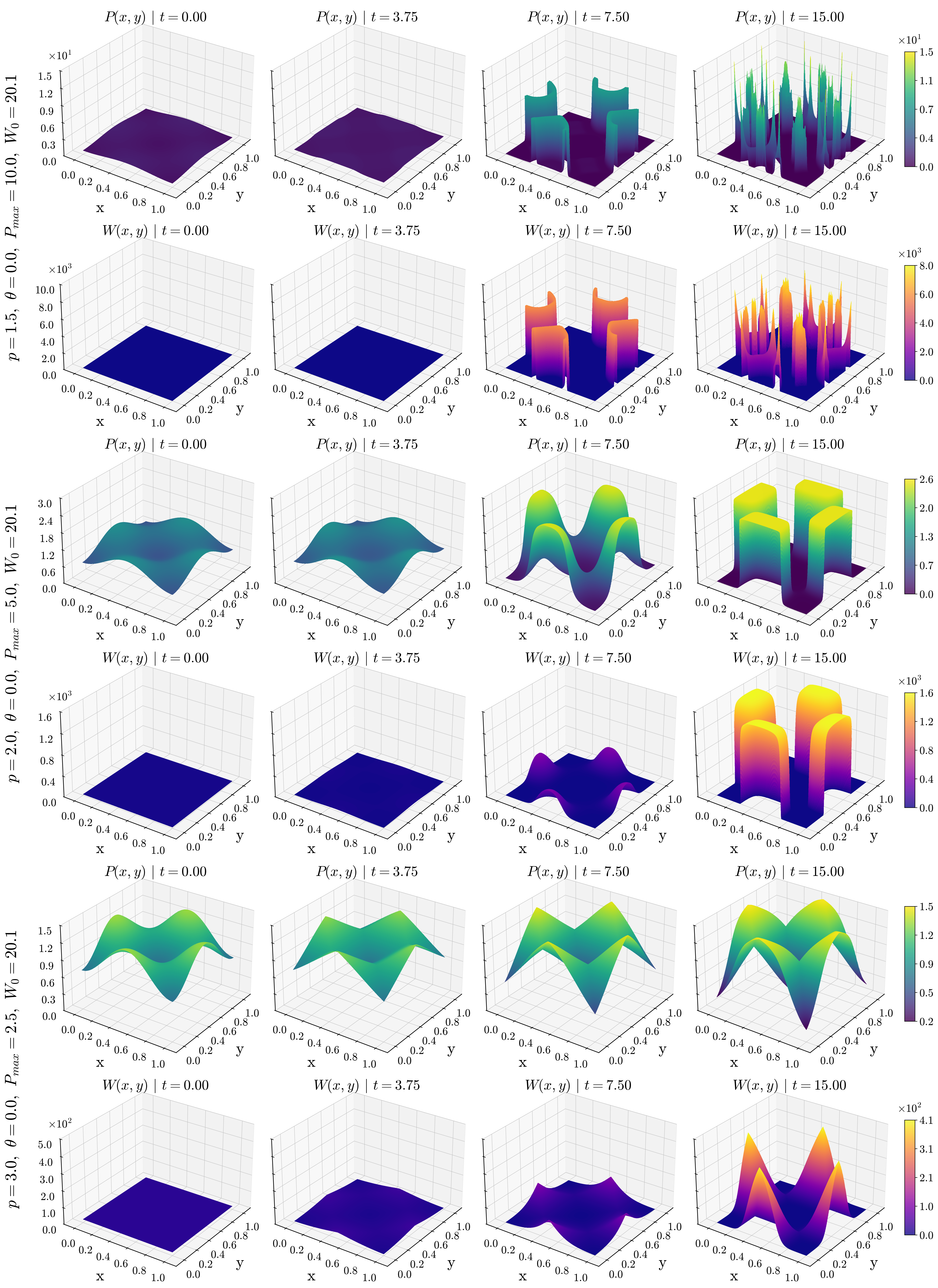}
\caption{Cell aggregation and VEGF concentration, $P(x,y,t)$ (top rows) and $W(x,y,t)$ (bottom rows), respectively, on a planar domain, organized into three blocks based on the diffusion regime. Base model with volume-filling factor is employed with $D = 0.036$, $\mu = 1.0$, $\lambda = 10^{3}$, $\beta = 0.01$, $\gamma = 10^{2}$, $\delta = 10^{2}$, $\theta=0$, $L = 1.0$, $P(x,y,0)=1.0-0.3\cos(2\pi x)\cos(2\pi y)$, $W(x,y,0)=e^3$ over $[0,1]\times[0,1]$, shown at four representative times of $t=0,\,3.75,\,7.5,\,15$. \textit{Upper block}: fast diffusion ($p=1.5$). \textit{Middle block}: normal diffusion ($p=2.0$). \textit{Lower block}: slow diffusion ($p=3.0$).}
\label{fig:2d_vf_p}
\end{figure}

The localized clusters for $p=1.5$ are more pronounced for the cylindrical domain shown in \cref{fig:2d_vf_c}, where thickened geometric patches are formed rather than singular peaks and rings. Once an aggregation band reaches $P_{max}$, the persisting random motility forces the cells to distribute along the azimuthal ($\phi$) and axial ($x$) directions, yielding to localized swellings.

\begin{figure}[!htbp]
\centering
\includegraphics[width=0.95\textwidth]{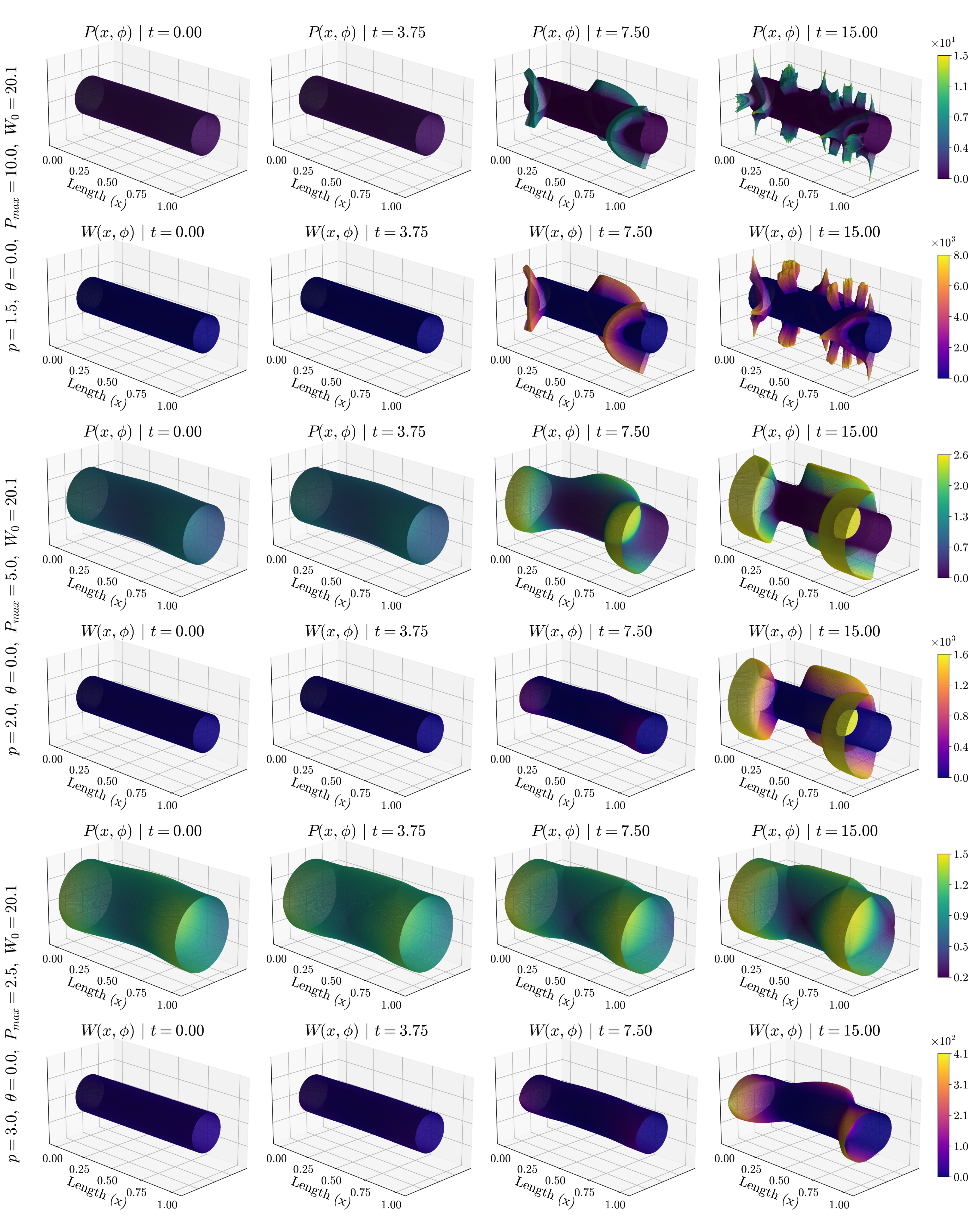}
\caption{Cell aggregation and VEGF concentration, $P(x,\phi,t)$ (top rows) and $W(x,\phi,t)$ (bottom rows), respectively, on a cylindrical domain representing a real blood vessel. The figure is organized into three blocks based on the diffusion regimes, fast ($p=1.5$, upper block), normal ($p=2.0$, middle block), and slow ($p=3.0$, lower block). Base model with volume-filling factor is employed and other parameters are consistent with \cref{fig:2d_vf_p}. The concentrations are visualized as radial deformations from a baseline radius $R_0$, with initial conditions $P(x,\phi,0)=1.0-0.3\cos(2\pi x)\cos(\phi)$ and $W(x,\phi,0)=e^5$ over the cylindrical surface defined by axial length $x \in [0,1]$ and azimuthal angle $\phi \in [0,2\pi)$, shown at four representative times ($t=0,\,3.75,\,7.5,\,15$).}
\label{fig:2d_vf_c}
\end{figure}

The geometry of the initial condition of the cell density dictates the symmetry of the cell aggregation and chemical profiles illustrated in \cref{fig:2d_b_p}, \cref{fig:2d_b_p}, \cref{fig:2d_vf_p}, and \cref{fig:2d_vf_c}. In 1D, $P(x,0)=1.0-0.3\cos(2\pi x)$, whereas the 2D simulations employ $P(x,y,0)=1.0-0.3\cos(2\pi x)\cos(2\pi y)$ for planar and $P(x,\phi,0) = 1.0 - 0.3\cos(2\pi x)\cos(\phi)$ for cylindrical domains. Since the governing PDEs are isotropic, this initial symmetry is conserved. And the concentration profiles of both the cell density $P$ and the chemical field $W$ remain symmetric at every time step regardless of the diffusion regime (fast, normal, or slow).

\section{Energy Estimates and Analysis}\label{energy}

\subsection{Model Formulation}
We consider \cref{eq:logistic} on $\Omega\subset\mathbb{R}^n$ ($n = 1,2$), a bounded domain with smooth boundary, to obtain the following model with  $p>1$,
\begin{equation}
\begin{cases}
P_t = D\,\nabla\cdot\Big[(|\nabla P|^{p-2}+\theta)\nabla P - P\,\dfrac{\delta}{(W+\beta)(W+\gamma)}\,\nabla W\Big] + rP\Big(1-\dfrac{P}{K}\Big),\\[6pt]
W_t = D_1\Delta W + \Big(\dfrac{P}{1+\nu W}-\mu\Big)W,
\end{cases}
\label{eq:main_model}
\end{equation}
(for $x\in\Omega,\ t>0$) subject to the Neumann-type boundary conditions for $x\in\partial\Omega$
\begin{equation}
\Big[(|\nabla P|^{p-2}+\theta)\nabla P - P\,\frac{\delta}{(W+\beta)(W+\gamma)}\,\nabla W\Big]\cdot\nu = 0,
\qquad
\partial_\nu W = 0.
\label{eq:main_model_bcd}
\end{equation}

We state our main result,

\begin{theorem}
\label{thm:w1s}
    Consider \eqref{eq:main_model}. Let $2>p>\frac{3}{2}$, 
    and all other parameters be positive. If the initial data $P_{0}(x) \in C^{\omega}(\Omega), 0 < \omega <1, W_{0}(x) \in C^{2}(\Omega)$, is sufficiently small, $||P_{0}||_{\infty} < C, ||W_{0}||_{\infty} < C$, where $C$ depends on the parameters in the problem, then there exists a globally bounded weak solution to \eqref{eq:main_model}.
\end{theorem}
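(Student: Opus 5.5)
We follow the standard regularize–estimate–pass-to-the-limit route used for $p$-Laplacian chemotaxis systems (cf.\ \cite{rani2024global,li2020global}). First we replace the degenerate flux $|\nabla P|^{p-2}\nabla P$ by $(|\nabla P|^2+\varepsilon)^{(p-2)/2}\nabla P$, $\varepsilon\in(0,1)$. With $\theta\ge 0$ and the smooth, bounded sensitivity $\chi(W)=\delta/((W+\beta)(W+\gamma))$, the regularized problem is a nondegenerate quasilinear parabolic system. Local classical solutions $(P_\varepsilon,W_\varepsilon)$ then exist by Amann's theory, and a blow-up alternative holds in $L^\infty$. Nonnegativity follows from the maximum principle, since $P_\varepsilon$ solves a linear equation with zeroth-order coefficient $r(1-P/K)$ and $W_\varepsilon$ one with $P/(1+\nu W)-\mu$.

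Second, we obtain $\varepsilon$-independent a priori bounds. Integrating the $P$-equation and using the logistic term gives a uniform $L^1$ bound on $P_\varepsilon$. For $W$, the key observation is that the source $\frac{P}{1+\nu W}W\le P/\nu$ is controlled by $P$ alone. Hence, via the Neumann heat semigroup, $\|W\|_{\infty}\le \|W_0\|_\infty+C\sup_s\|P(s)\|_{L^q}$ for $q>n/2$, and we also get $\|\nabla W\|_{L^{r}}$ bounds from $L^q$ bounds on $P$ (by $L^q$–$L^r$ smoothing).

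Third, we test the $P$-equation with $P^{k-1}$, $k\ge 2$:
\[
\frac1k\frac{d}{dt}\!\int P^k + D(k-1)\!\int P^{k-2}|\nabla P|^{p}\le D(k-1)\|\chi\|_\infty\!\int P^{k-1}|\nabla P||\nabla W| + r\!\int P^k-\frac rK\!\int P^{k+1}.
\]
We split the cross term with Young's inequality:
\[
P^{k-1}|\nabla P||\nabla W|\le \tfrac12 P^{k-2}|\nabla P|^p + C P^{k-2+p'}|\nabla W|^{p'}, \qquad p'=p/(p-1)\in(2,3).
\]
The restriction $p>3/2$ is exactly what gives $p'<3$, so that this term can be absorbed. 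We estimate it by H\"older together with the $\nabla W$ bound (in $n\le2$), and by the Gagliardo–Nirenberg inequality applied to $P^{(k+p-2)/p}$, whose gradient is controlled by the dissipation. Because the fast diffusion ($p<2$) makes the dissipation weaker than $|\nabla P|^2$, this does not close for arbitrary data. Instead we close by a bootstrap/continuity argument. As long as $\|P\|_\infty\le 2C_0$ and $\|W\|_\infty\le 2C_0$, the right side is bounded by a small multiple of the dissipation plus the logistic absorption $-\frac rK\int P^{k+1}$. Smallness of $\|P_0\|_\infty,\|W_0\|_\infty$ then keeps the solution inside this ball. Moser–Alikakos iteration in $k$ gives the uniform $L^\infty$ bound, and the blow-up alternative yields global existence for each $\varepsilon$. \textbf{This step is the main obstacle}: the constants must be uniform in $\varepsilon$, and the smallness threshold $C$ must be chosen so the Gagliardo–Nirenberg exponents close precisely when $p>3/2$. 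I would first try to make this work at $k=2$ and then iterate.

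Finally, we pass to the limit. Testing with $P$ gives $\int_0^T\!\int|\nabla P_\varepsilon|^p\le C(T)$. From the equation, $\partial_tP_\varepsilon$ is bounded in $L^{p'}(0,T;(W^{1,p})^*)$, while parabolic estimates bound $W_\varepsilon$ in $W^{2,1}_q$. The Aubin–Lions lemma then gives strong convergence of $P_\varepsilon$ in $L^p$ and of $W_\varepsilon,\nabla W_\varepsilon$ in $L^2$. The flux $(|\nabla P_\varepsilon|^2+\varepsilon)^{(p-2)/2}\nabla P_\varepsilon$ is bounded in $L^{p'}$ and converges weakly to some $\zeta$. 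We identify $\zeta=|\nabla P|^{p-2}\nabla P$ by Minty's monotonicity trick, using the energy identity and the strong convergence of the lower-order terms. Boundedness passes to the limit, which gives the globally bounded weak solution asserted in \cref{thm:w1s}.
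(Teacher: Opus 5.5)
\paragraph{Gap: the small-data closing step.} Your overall architecture matches the paper's: regularize, get the $L^1$ bound, test with powers of $P_\varepsilon$, Young-split the chemotactic term into $P^{k-2+p'}|\nabla W|^{p'}$, close with small data, then Moser--Alikakos, gradient and time-derivative bounds, Aubin--Lions, and identification of the flux. However, the decisive step is not carried out, and the mechanism you sketch for it cannot work.

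You claim that small $\|P_0\|_\infty,\|W_0\|_\infty$ keep the solution in the ball $\{\|P\|_\infty\le 2C_0,\ \|W\|_\infty\le 2C_0\}$, and that inside it the right-hand side is a small multiple of the dissipation plus the logistic absorption. This ignores $r\int P^k$, which is a \emph{linear growth} term. The dissipation cannot absorb it, since it vanishes on spatially constant functions under Neumann conditions. Nor can $-\frac rK\int P^{k+1}$, which is pointwise weaker than $rP^k$ wherever $P<K$. Concretely, take constant data $P_0\equiv c\ll 1$, $W_0\equiv w\ll 1$. The solution is spatially homogeneous, $P(t)\to K$, and, if $K>\mu$, $W(t)\to (K-\mu)/(\mu\nu)$. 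So it leaves every ball of radius below $K$, however small the data.

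The bootstrap radius $R$ must therefore be at least of order $K$, and at that size nothing is small. By Lemma~\ref{lem:2.1prime}(ii) you get $\|\nabla W\|_\infty\le \tilde C(R/\nu+\|\nabla W_0\|_\infty)$. Your Young step, which works precisely because $p'<3$, then yields an $L^\infty$ output of order $(K/r)^{1/(3-p')}(R+\|\nabla W_0\|_\infty)^{p'/(3-p')}$. The exponent $p'/(3-p')$ exceeds one, so this is superlinear in $R$. Closing the bootstrap thus requires a quantitative smallness condition linking $\delta/(\beta\gamma D)$, $K$, $r$ and $\|\nabla W_0\|_\infty$. You have not derived it, and it is essentially the content of the theorem. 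Your observation that $p'<3$ makes the chemotactic term subordinate to $P^{k+1}$ is correct. It shows that everything hinges on the feedback through $\nabla W$, which the proposal leaves open.

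\paragraph{How the paper closes it.} The paper works with the integral functional $X(t)=\int_\Omega(1+P_\varepsilon)^m$ rather than an $L^\infty$ ball. After the same Young split, plus a H\"older step with exponent $\theta_1$, it bounds $\int_\Omega|\nabla W_\varepsilon|^{p\theta_1/(p-1)}$ by a power of $\int_\Omega(1+P_\varepsilon)^m$, using Gagliardo--Nirenberg and the semigroup estimate of Lemma~\ref{lem:2.1prime}. It handles the remaining terms with Lemma~\ref{lem:pe21} (the embedding $W^{1,p}\hookrightarrow L^6$ in $n=2$, which is where $p>3/2$ enters there) and the logistic absorption. It then compares with the scalar ODE $X'=-X+C_1X^{p\theta_1/(p-1)}$. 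Since the exponent exceeds one, solutions starting below the threshold stay bounded, and this is what produces the explicit condition $\|P_\varepsilon(0)\|_\infty\le(\beta\gamma D/(4\delta))^2$.

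To complete your proof you need an inequality of this superlinear type, or an equivalent quantitative bootstrap at the level $R\ge cK$, rather than invariance of a small $L^\infty$ ball. Whatever form it takes, the threshold must accommodate the $O(K)$ level that the logistic source generates.

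\paragraph{The rest of the plan.} The limit passage is fine and in places more careful than the paper's. Bounding $\partial_tP_\varepsilon$ in $L^{p'}(0,T;(W^{1,p}(\Omega))^*)$ rather than $L^1$ is what makes the limiting energy identity, and hence Minty's trick, available.
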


Our strategy to prove the above is to consider a regularised system and derive uniform estimetes in the regularising parameter $\epsilon$. Next we will show that the requisite convergences hold as $\epsilon \rightarrow 0$. For the special case $p=2$, we show there actually exists classical solution, and this is possible even without diffusion in the VEGF.
\subsection{Functional Preliminaries}

\begin{definition}[Weak solution]
Let
\[
0\le P\in L^1_{loc}(\bar\Omega\times[0,\infty)),
\]
\[
0\le W\in L^\infty_{loc}(\bar\Omega\times[0,\infty))\cap L^1_{loc}\big([0,\infty);W^{1,1}(\Omega)\big),
\]
and
\[
(|\nabla P|^{p-2}+\theta)|\nabla P|,\quad P\,|\nabla W|\ \in\ L^1_{loc}(\bar\Omega\times[0,\infty)).
\]
We call $(P,W)$ a global weak solution of \eqref{eq:main_model}--\eqref{eq:main_model_bcd} if the following equalities hold for all $\varphi\in C_0^\infty(\bar\Omega\times[0,\infty))$:
\begin{align*}
-\int_0^\infty\!\!\int_\Omega P\varphi_t\,dx\,dt-\int_\Omega P_0\varphi(\cdot,0)\,dx
={}& -D\int_0^\infty\!\!\int_\Omega (|\nabla P|^{p-2}+\theta)\nabla P\cdot\nabla\varphi\,dx\,dt\\
&+D\int_0^\infty\!\!\int_\Omega \frac{\delta\,P}{(W+\beta)(W+\gamma)}\,\nabla W\cdot\nabla\varphi\,dx\,dt\\
&+\int_0^\infty\!\!\int_\Omega rP\Big(1-\frac{P}{K}\Big)\varphi\,dx\,dt,
\end{align*}
and
\begin{align*}
-\int_0^\infty\!\!\int_\Omega W\varphi_t\,dx\,dt-\int_\Omega W_0\varphi(\cdot,0)\,dx
={}& -D_1\int_0^\infty\!\!\int_\Omega \nabla W\cdot\nabla\varphi\,dx\,dt\\
&+\int_0^\infty\!\!\int_\Omega \Big(\frac{P}{1+\nu W}-\mu\Big)W\,\varphi\,dx\,dt.
\end{align*}
\end{definition}

\bigskip

\begin{lemma}\label{lem:2.1prime}
Let $\Omega\subset\mathbb{R}^n$ ($n\ge 1$) be a bounded domain with smooth boundary, $1\le p,q\le\infty$, and $D_1,\mu>0$.

\begin{enumerate}
\item[(i)] If $\dfrac{n}{2}\Big(\dfrac1p-\dfrac1q\Big)<1$, then there exists $C=C(D_1,\mu,n,p,q,\Omega)>0$ such that whenever
$\zeta\in C^{2,1}(\bar\Omega\times(0,T))\cap C^0(\bar\Omega\times[0,T))$ for $T\in(0,\infty]$ solves
\[
\begin{cases}
\zeta_t=D_1\Delta\zeta-\mu\zeta+g, & (x,t)\in\Omega\times(0,T),\\
\partial_\nu\zeta=0, & (x,t)\in\partial\Omega\times(0,T),\\
\zeta(x,0)=\zeta_0(x), & x\in\Omega,
\end{cases}
\]
with $g\in C^0(\bar\Omega\times[0,T))$ and $\zeta_0\in W^{1,\infty}(\Omega)$, it holds that
\[
\|\zeta(\cdot,t)\|_{L^q(\Omega)} \le C\Big(\sup_{s\in(0,t)}\|g(\cdot,s)\|_{L^p(\Omega)}+\|\zeta_0\|_{L^\infty(\Omega)}\Big)
\qquad\text{for each } t\in(0,T).
\]

\item[(ii)] Assume that $\dfrac12+\dfrac{n}{2}\Big(\dfrac1p-\dfrac1q\Big)<1$. Then there exists $\tilde C=\tilde C(D_1,\mu,n,p,q,\Omega)>0$ such that if
$\zeta\in C^{2,1}(\bar\Omega\times(0,T))\cap C^0(\bar\Omega\times[0,T))$ with $T\in(0,\infty]$ is a solution of the above problem, then
\[
\|\nabla\zeta(\cdot,t)\|_{L^q(\Omega)} \le \tilde C\Big(\sup_{s\in(0,t)}\|g(\cdot,s)\|_{L^p(\Omega)}+\|\nabla\zeta_0\|_{L^\infty(\Omega)}\Big)
\qquad\text{for any } t\in(0,T).
\]
\end{enumerate}
\end{lemma}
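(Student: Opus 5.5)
The plan is to use the variation-of-constants representation and the standard $L^p$--$L^q$ smoothing estimates for the Neumann heat semigroup; this lemma is classical (see, e.g., Winkler's estimates for the Neumann heat semigroup). Write $(e^{\tau\Delta})_{\tau\ge0}$ for the Neumann heat semigroup on $\Omega$. Since $\zeta$ is a classical solution, uniqueness gives
\[
\zeta(\cdot,t)=e^{-\mu t}e^{D_1t\Delta}\zeta_0+\int_0^t e^{-\mu(t-s)}e^{D_1(t-s)\Delta}g(\cdot,s)\,ds .
\]
Taking norms reduces both parts to integrating the semigroup bounds against the exponential factor $e^{-\mu(t-s)}$.

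For (i), I will use the known bound: for $1\le p\le q\le\infty$,
\[
\|e^{\tau\Delta}w\|_{L^q}\le c_1\bigl(1+\tau^{-\frac n2(\frac1p-\frac1q)}\bigr)\|w\|_{L^p}.
\]
If $p>q$, I first use $\|\cdot\|_{L^q}\le|\Omega|^{1/q-1/p}\|\cdot\|_{L^p}$ and work with $q=p$. The initial-data term is handled by the maximum principle and the finite measure of $\Omega$:
\[
\|e^{-\mu t}e^{D_1t\Delta}\zeta_0\|_{L^q}\le|\Omega|^{1/q}\|\zeta_0\|_{L^\infty}.
\]
The Duhamel term is bounded by
\[
c_1\sup_{s}\|g(\cdot,s)\|_{L^p}\int_0^\infty e^{-\mu\sigma}\bigl(1+(D_1\sigma)^{-\frac n2(\frac1p-\frac1q)}\bigr)\,d\sigma .
\]
This integral is finite, uniformly in $t$ and $T$, exactly because $\frac n2(\frac1p-\frac1q)<1$ (integrability at $\sigma=0$) and $\mu>0$ (integrability at infinity).

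For (ii), I differentiate the representation and use two further semigroup facts:
\[
\|\nabla e^{\tau\Delta}w\|_{L^q}\le c_2\bigl(1+\tau^{-\frac12-\frac n2(\frac1p-\frac1q)}\bigr)e^{-\lambda_1\tau}\|w\|_{L^p},
\qquad
\|\nabla e^{\tau\Delta}w\|_{L^\infty}\le c_3\|\nabla w\|_{L^\infty},
\]
where $\lambda_1>0$ is the first nonzero Neumann eigenvalue. The second fact gives
\[
\|\nabla(e^{-\mu t}e^{D_1t\Delta}\zeta_0)\|_{L^q}\le c_3|\Omega|^{1/q}\|\nabla\zeta_0\|_{L^\infty}.
\]
The Duhamel term is bounded as in (i), now with kernel $\sigma^{-\frac12-\frac n2(\frac1p-\frac1q)}e^{-\mu\sigma}$. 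This kernel is integrable on $(0,\infty)$ precisely under the hypothesis $\frac12+\frac n2(\frac1p-\frac1q)<1$. Differentiating under the integral is legitimate because, by assumption, the $s$-integral converges absolutely in $L^q$.

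The main obstacle is the gradient smoothing estimate. In particular, the gradient bound is applied to $g(\cdot,s)$, which need not have zero mean; this is harmless because constants are annihilated by $\nabla$, so one may replace $g$ by $g-\bar g$ and use $\|g-\bar g\|_{L^p}\le 2\|g\|_{L^p}$. Also, the hypothesis only gives $\zeta_0\in W^{1,\infty}$, not $\partial_\nu\zeta_0=0$, so $L^\infty$-gradient stability requires a little care. If that proves delicate, I would first prove the bound for $\zeta_0$ satisfying the compatibility condition and then pass to general $\zeta_0$ by approximation, since only $\|\nabla\zeta_0\|_{L^\infty}$ enters the estimate. Otherwise I would cite the established gradient estimates for the Neumann heat semigroup on smooth bounded domains. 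All constants depend only on $D_1,\mu,n,p,q,\Omega$, as claimed.
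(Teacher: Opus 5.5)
Your proof is correct, and it is the standard argument for this lemma. The paper itself gives no proof: the lemma is quoted as a standard tool from the chemotaxis literature. Your route (Duhamel formula for $e^{t(D_1\Delta-\mu)}$, the Neumann heat-semigroup $L^p$--$L^q$ and gradient smoothing estimates, and integrability of $e^{-\mu\sigma}\sigma^{-\frac n2(\frac1p-\frac1q)}$, resp.\ $e^{-\mu\sigma}\sigma^{-\frac12-\frac n2(\frac1p-\frac1q)}$, on $(0,\infty)$) is how it is usually proved.

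The only ingredient beyond textbook semigroup bounds is the initial-data term in (ii) when $q=\infty$. For $q<\infty$ the known bound $\|\nabla e^{\tau\Delta}w\|_{L^r}\le Ce^{-\lambda_1\tau}\|\nabla w\|_{L^r}$ with $r=\max\{q,2\}$, plus H\"older, suffices. For $q=\infty$ on a non-convex domain, the real obstacle is boundary curvature, not the compatibility condition, so your approximation fallback alone would not close it. Either cite the known $W^{1,\infty}$-stability of the Neumann semigroup, or argue directly in two steps:
\begin{itemize}
\item[(a)] For $\tau\le 1$, apply the maximum principle to $e^{-h}|\nabla u|^2$, with $h$ smooth and $\partial_\nu h\ge C_\Omega$, using $\partial_\nu|\nabla u|^2\le C_\Omega|\nabla u|^2$ on $\partial\Omega$.
\item[(b)] For $\tau\ge 1$, use smoothing together with $\|w-\bar w\|_{L^\infty}\le C\|\nabla w\|_{L^\infty}$.
\end{itemize}
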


\begin{lemma}
\label{lem:wlp}
Consider exponents $0 < p_0 < p_1 < \infty$ and a domain $\Omega$ that is a closed and bounded in $\mathbb{R}^{n}$, $n\geq 1$, and a $u(x) \in L^{p_1}(\Omega)$. Then,

\begin{equation}
||u||_{L^{p_{\theta}}(\Omega)} \leq C||u||^{1-\theta}_{L^{p_{0}}(\Omega)} ||u||^{\theta}_{L^{p_{1}}(\Omega)}
\end{equation}

for all $0 \leq \theta \leq 1$, and where $p_{\theta}$ is defined as $\frac{1}{p_{\theta}} = \frac{1-\theta}{p_0} + \frac{\theta}{p_1}$.
\end{lemma}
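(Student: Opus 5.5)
The statement just stated is the interpolation inequality of Lemma~\ref{lem:wlp}: $\|u\|_{L^{p_\theta}}\le C\|u\|_{L^{p_0}}^{1-\theta}\|u\|_{L^{p_1}}^{\theta}$ on a bounded domain. Our plan is to derive it from H\"older's inequality applied to a splitting of $|u|^{p_\theta}$; in fact we expect to obtain it with $C=1$.

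First we dispose of the trivial cases. If $\theta=0$ or $\theta=1$, then $p_\theta=p_0$ or $p_\theta=p_1$, and the inequality is an identity. For $0<\theta<1$ we have $p_0<p_\theta<p_1$. Since $\Omega$ is bounded and $u\in L^{p_1}(\Omega)$, every norm appearing is finite; this follows from H\"older on a finite-measure set.

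Now fix $\theta\in(0,1)$ and write
\[
|u|^{p_\theta}=|u|^{(1-\theta)p_\theta}\,|u|^{\theta p_\theta}.
\]
We introduce the conjugate exponents
\[
a=\frac{p_0}{(1-\theta)p_\theta},\qquad b=\frac{p_1}{\theta p_\theta}.
\]
The defining relation $\frac{1}{p_\theta}=\frac{1-\theta}{p_0}+\frac{\theta}{p_1}$ gives exactly $\frac1a+\frac1b=1$, with $a,b\in(1,\infty)$. H\"older's inequality then yields
\[
\int_\Omega|u|^{p_\theta}\le\Big(\int_\Omega|u|^{p_0}\Big)^{(1-\theta)p_\theta/p_0}\Big(\int_\Omega|u|^{p_1}\Big)^{\theta p_\theta/p_1}.
\]
Taking the $1/p_\theta$ power gives the claim with $C=1$.

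This argument works even when $0<p_0<1$, where $\|\cdot\|_{L^{p_0}}$ is only a quasi-norm. H\"older's inequality on the integrals needs only $a,b>1$, and that holds here.

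We do not expect a genuine obstacle. The only point requiring care is verifying that $a,b>1$ and that $\frac1a+\frac1b=1$, and both follow directly from the definition of $p_\theta$. Boundedness of $\Omega$ is not actually needed for the inequality itself, only for the finiteness remarks above.
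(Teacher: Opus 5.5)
Your proof is correct: splitting $|u|^{p_\theta}=|u|^{(1-\theta)p_\theta}|u|^{\theta p_\theta}$ and applying H\"older with the conjugate exponents $a=p_0/((1-\theta)p_\theta)$ and $b=p_1/(\theta p_\theta)$ is the standard proof of this interpolation inequality. It gives $C=1$, and it remains valid when $0<p_0<1$. The paper states the lemma without proof as a standard fact, so your argument is the one it implicitly relies on.
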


\begin{lemma}
\label{lem:gns}
Consider a $\phi \in W^{m,q^{'}}(\Omega) \cap L^{q}(\Omega)$. Then if $p^{'}, q^{'}, q \geq 1, 0 \leq \theta \leq 1$, and

\begin{equation}
k - \frac{n}{p^{'}} \leq \theta \left(  m - \frac{n}{q^{'}} \right) - (1 - \theta) \frac{n}{q},
\end{equation}
there exists a constant C such that,

\begin{equation}
	||\phi||_{W^{k,p^{'}}(\Omega)} \leq C ||\phi||^{\theta}_{W^{m,q^{'}}(\Omega)} ||\phi||^{1 - \theta}_{L^{q}(\Omega)}.
	\end{equation}

\end{lemma}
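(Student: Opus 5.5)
The plan is to derive the statement from the classical Gagliardo--Nirenberg inequality on $\mathbb{R}^n$, combined with an extension argument and the boundedness of $\Omega$. Throughout, $\Omega$ is taken bounded with smooth boundary, as in the rest of \cref{energy}. The lemma as worded omits two standard side conditions, so I will read it with them added: $\frac{k}{m}\le\theta\le 1$ with $0\le k<m$, and the usual exceptional case excluded (namely $1<q'<\infty$ with $m-k-\frac{n}{q'}$ a nonnegative integer forces $\theta<1$). The proof only covers the lemma under these conditions, and this is the form in which it would be applied to the regularised system.

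\emph{Step 1: extension to $\mathbb{R}^n$.} I would invoke Stein's universal extension operator $E$. It is simultaneously bounded from $W^{j,s}(\Omega)$ to $W^{j,s}(\mathbb{R}^n)$ for every $j\le m$ and every $1\le s\le\infty$. In particular we get
\[
\|E\phi\|_{W^{m,q'}(\mathbb{R}^n)}\le C\|\phi\|_{W^{m,q'}(\Omega)},\qquad \|E\phi\|_{L^{q}(\mathbb{R}^n)}\le C\|\phi\|_{L^{q}(\Omega)}.
\]
Multiplying by a fixed cutoff equal to $1$ on $\Omega$ lets us assume $E\phi$ is supported in a fixed ball $B$.

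\emph{Step 2: the scaling-critical case.} Assume first that equality holds in the hypothesis,
\[
k-\frac{n}{p'}=\theta\Bigl(m-\frac{n}{q'}\Bigr)-(1-\theta)\frac{n}{q}.
\]
Nirenberg's inequality on $\mathbb{R}^n$ then gives
\[
\|D^k u\|_{L^{p'}}\le C\|D^m u\|_{L^{q'}}^{\theta}\|u\|_{L^q}^{1-\theta}
\]
for $u=E\phi$. For lower-order derivatives $D^j u$ with $j<k$, I would apply the same inequality with suitable exponents. I would then use that $u$ is supported in the bounded set $B$: by Hölder, any $L^{s}$ norm with $s$ larger than the one required controls the required one. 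Together with $\|D^m u\|_{L^{q'}}\le\|u\|_{W^{m,q'}}$, restricting back to $\Omega$ bounds $\|\phi\|_{W^{k,p'}(\Omega)}$ by the right-hand side.

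\emph{Step 3: the strict-inequality case.} If the inequality is strict, define $p''$ by
\[
k-\frac{n}{p''}=\theta\Bigl(m-\frac{n}{q'}\Bigr)-(1-\theta)\frac{n}{q}.
\]
Then $p''\ge p'$, or $p''=\infty$ when the right-hand side is positive, in which case I would use the Morrey/$L^\infty$ version of Nirenberg. Applying Step 2 with $p''$ and then Hölder on the bounded domain gives
\[
\|\phi\|_{W^{k,p'}(\Omega)}\le |\Omega|^{\frac{1}{p'}-\frac{1}{p''}}\|\phi\|_{W^{k,p''}(\Omega)}.
\]
This yields the claim, with a constant depending on $\Omega$.

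I expect the main obstacle to be Step 2, in two places. The first is the endpoint and exceptional cases ($p''=\infty$, $q'=1$, or $\theta=1$), where the Fourier/scaling proof of Nirenberg's inequality fails and a separate argument via Morrey or BMO-type estimates is needed. The second is making sure the lower-order terms of the full $W^{k,p'}$ norm are absorbed without needing a lower bound on $\theta$ other than $\frac{k}{m}\le\theta$. For the lower-order terms I would first try interpolating between $L^q$ and $W^{m,q'}$, using the compact support from Step 1.
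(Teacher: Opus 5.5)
The paper does not prove this lemma at all. It is the classical Gagliardo--Nirenberg inequality, stated in the preliminaries and used as a black box.

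Your reduction is the standard way to obtain the bounded-domain version with full Sobolev norms: Stein extension plus a cutoff to a fixed ball, then Nirenberg's whole-space inequality in the scaling-critical case, then H\"older on the bounded support to pass from the critical exponent $p''$ down to $p'$. So you supply a justification where the paper has only a citation, while still resting on Nirenberg's theorem on $\mathbb{R}^n$.

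You also supply the correct hypotheses, and your added condition $\theta\ge k/m$ is genuinely necessary. Take $n=1$, $\Omega=(0,1)$, $k=1$, $\theta=0$, $p'=1$, $q=\infty$. The displayed condition holds with equality, yet $\|\sin(Nx)\|_{W^{1,1}(0,1)}$ grows like $N$ while $\|\sin(Nx)\|_{L^\infty(0,1)}\le 1$. So the lemma as printed is false. A side benefit of your cutoff is that Nirenberg's exception $k=0$, $q=\infty$, $mq'<n$ (which needs decay at infinity) is handled automatically.

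The step that needs more than you give it is the supercritical sub-case of Step 3, when $g(\theta):=\theta\bigl(m-\frac{n}{q'}\bigr)-(1-\theta)\frac{n}{q}-k>0$. In that case no $p''\in[1,\infty]$ solves the scaling identity. Moreover, the inequality $\|D^ku\|_{L^\infty}\le C\|D^mu\|_{L^{q'}}^{\theta}\|u\|_{L^q}^{1-\theta}$ with this same $\theta$ is false on $\mathbb{R}^n$: dilate $u(x)\mapsto u(\lambda x)$ with $\lambda\to 0$. So it cannot be quoted from Nirenberg.

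The fix is short.
\begin{enumerate}
\item $g$ is affine with $g(k/m)\le 0<g(\theta)$, so there is $\theta^*\in[k/m,\theta)$ with $g(\theta^*)=0$.
\item Apply Nirenberg at $\theta^*$ with exponent $\infty$; this is not the exceptional case, since $\theta^*<1$.
\item Raise the exponent from $\theta^*$ to $\theta$ using $\|\phi\|_{L^q(\Omega)}\le C\|\phi\|_{W^{m,q'}(\Omega)}$.
\end{enumerate}
That embedding holds exactly in this regime: $g(1)\ge g(\theta)>0$ gives $m-\frac{n}{q'}>k\ge 0$, hence $W^{m,q'}(\Omega)\hookrightarrow L^\infty(\Omega)$. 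For non-integer $g(\theta)$ you could instead use Nirenberg's H\"older-seminorm version and bound the sup norm by the seminorm via the compact support.

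The same device handles the lower-order derivatives $D^j\phi$, $j<k$, whenever $\theta\bigl(m-\frac{n}{q'}\bigr)-(1-\theta)\frac{n}{q}>j$. There the same-$\theta$ exponent for $D^j$ would be negative. This is precisely the obstacle you anticipated, and the missing ingredient is again the embedding $W^{m,q'}(\Omega)\hookrightarrow L^q(\Omega)$. In the complementary regime, your same-$\theta$ exponent $p_j$ exceeds $p'$, and your H\"older argument goes through as written.
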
	

Now we state the classical Aubin-Lions compactness Lemma,

\begin{lemma}
\label{lem:al}
Let $X_0, X$ and $X_1$ be three reflexive Banach spaces with 

$X_0 \hookrightarrow \hookrightarrow X \hookrightarrow X_1$. That is $ X_0$ is compactly embedded in $X$ and that $X$ is continuously embedded in $X_1$. For $1\leq p,q \leq \infty$, let

 \begin{equation}
W = \{ u \in L^{p^{'}}([0,T];X_0) \ | \  u^{'} \in L^{q^{'}}([0,T];X_1)  \}
\end{equation}

(i) If $p^{'} < \infty$ then the embedding of $W$ into $L^{p^{'}}([0,T];X)$ is compact.

(ii) If $p^{'} = \infty$ and $q^{'} > 1$ then the embedding of $W$ into $C([0,T];X)$ is compact.
\end{lemma}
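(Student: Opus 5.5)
The plan is to reduce both parts to a uniform-in-$u$ interpolation inequality of Ehrling type, combined with an equicontinuity estimate in time coming from the bound on $u'$. Reflexivity of the spaces will not actually be needed. Fix a bounded set $\mathcal F\subset W$, so that $\|u\|_{L^{p'}(0,T;X_0)}+\|u'\|_{L^{q'}(0,T;X_1)}\le M$ for all $u\in\mathcal F$. The aim is to show that $\mathcal F$ is totally bounded in $L^{p'}(0,T;X)$ in case (i), and in $C([0,T];X)$ in case (ii).

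\textbf{Step 1 (Ehrling's inequality).} We first show that for every $\eta>0$ there is $C_\eta$ with
\[
\|v\|_X\le \eta\|v\|_{X_0}+C_\eta\|v\|_{X_1}\qquad\text{for all } v\in X_0 .
\]
We argue by contradiction. Otherwise there are $v_k$ with $\|v_k\|_{X_0}=1$ and $\|v_k\|_X>\eta+k\|v_k\|_{X_1}$. Since $X_0\hookrightarrow\hookrightarrow X$, a subsequence converges in $X$ to some $v$. Because $\|v_k\|_X$ is bounded, $\|v_k\|_{X_1}\to0$, hence $v=0$ by the continuous embedding $X\hookrightarrow X_1$. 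This contradicts $\|v_k\|_X>\eta$.

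\textbf{Step 2 (time regularity in $X_1$).} Writing $u(t)-u(s)=\int_s^t u'$ gives
\[
\|u(t)-u(s)\|_{X_1}\le M|t-s|^{1-1/q'} \quad\text{if } q'>1 .
\]
In every case this also gives $\|u(t)-u(s)\|_{X_1}\le\int_s^t\|u'\|_{X_1}$. Averaging $\|u(t)\|_{X_1}\le \|u(s)\|_{X_1}+\int_0^T\|u'\|_{X_1}$ over $s$ shows that $\mathcal F$ is bounded in $C([0,T];X_1)$, say by $M'$.

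\textbf{Step 3, case (ii).} Here $p'=\infty$, so after modification on a null set $\|u(t)\|_{X_0}\le M$ for every $t$. By Step 2 with $q'>1$, the family $\mathcal F$ is uniformly H\"older continuous in $X_1$. For each $t$, the set $\{u(t):u\in\mathcal F\}$ is bounded in $X_0$ and hence relatively compact in $X$, and therefore also in $X_1$. The vector-valued Arzel\`a--Ascoli theorem then gives relative compactness in $C([0,T];X_1)$. Step 1 upgrades this to $C([0,T];X)$, since
\[
\sup_t\|u(t)-v(t)\|_X\le 2\eta M+C_\eta\sup_t\|u(t)-v(t)\|_{X_1}.
\]
Thus a sequence that is Cauchy in $C(X_1)$ is Cauchy in $C(X)$.

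\textbf{Step 4, case (i), via Steklov averages.} For small $h>0$, extend $u$ by $u(T)$ on $[T,T+h]$ and set $u_h(t)=\frac1h\int_t^{t+h}u(s)\,ds$.

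First, the averages approximate $u$ uniformly over $\mathcal F$. By Step 2 we have $\|u_h(t)-u(t)\|_{X_1}\le \int_t^{t+h}\|u'\|_{X_1}$ and $\|u_h-u\|_{X_1}\le 2M'$. Hence
\[
\|u_h-u\|^{p'}_{L^{p'}(X_1)}\le (2M')^{p'-1}\int_0^T\!\!\int_t^{t+h}\|u'\|_{X_1}\,ds\,dt\le (2M')^{p'-1}hM .
\]
Combining with Step 1 gives
\[
\|u_h-u\|_{L^{p'}(X)}\le 2\eta M+C_\eta C h^{1/p'} ,
\]
which is small uniformly in $u\in\mathcal F$ once $\eta$ is chosen first and then $h$. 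It is here that $p'<\infty$ is used.

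Second, for fixed $h$ the family $\{u_h\}$ is relatively compact. By H\"older, $\|u_h(t)\|_{X_0}\le h^{-1/p'}M$, so pointwise values lie in a compact subset of $X$. Moreover, $\|u_h(t)-u_h(s)\|_X\le h^{-1}\big(\int_t^s+\int_{t+h}^{s+h}\big)\|u\|_X$. This tends to $0$ uniformly as $|t-s|\to0$ by H\"older and the bound $\|u\|_{L^{p'}(X)}\le CM$. Arzel\`a--Ascoli therefore makes $\{u_h\}$ relatively compact in $C([0,T];X)$, and hence in $L^{p'}(0,T;X)$.

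A totally bounded family within $\epsilon$ of each element of $\mathcal F$ makes $\mathcal F$ totally bounded, which proves (i).

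The main obstacle is the uniform approximation estimate in Step 4 when only $q'\ge1$ is available, since then there is no H\"older modulus. This is handled by the $L^\infty(X_1)$ bound from Step 2, which converts the $L^1$ translation estimate into an $L^{p'}$ one.
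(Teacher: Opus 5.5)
\textbf{Gap: Step 4 breaks at the right endpoint.} The paper gives no proof of this lemma; it quotes the classical Aubin--Lions(--Simon) result, so your argument has to stand on its own. Steps 1--3 are sound (modulo the last remark below). In case (i) you only know that $u$ has a representative in $C([0,T];X_1)$, and $u(T)$ need not lie in $X_0$, or even in $X$. For example, take $X_0=H^1$, $X=L^2$, $X_1=H^{-1}$ on an interval, $p'=2$, $q'=1$, with $e_k$ the $L^2$-normalized Neumann eigenfunctions. Let $\chi_k$ be smooth and monotone, rising from $0$ to $1$ on $[T-Tk^{-10},T]$. Then $u(t)=\sum_k k^{-1/2}\chi_k(t)e_k$ satisfies $u\in L^2(0,T;H^1)$ and $u'\in L^1(0,T;H^{-1})$, yet $u(T)=\sum_k k^{-1/2}e_k\notin L^2$. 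Taking $X_1$ weaker gives similar examples for any $q'$.

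Extending by the constant $u(T)$ therefore puts the component $\frac{t+h-T}{h}\,u(T)$ into $u_h(t)$ for $t\in(T-h,T]$. On that interval three things fail: the bound $\|u_h(t)\|_{X_0}\le h^{-1/p'}M$, the compactness of pointwise values in $X$, and the application of Ehrling's inequality to $u_h-u$. In fact $u_h$ is not even in $L^{p'}(0,T;X)$. The repair is easy. Extend by even reflection, $u(t):=u(2T-t)$ on $[T,T+h]$; this is continuous at $T$ in $X_1$ and keeps the $L^{p'}(X_0)$ and $W^{1,1}(X_1)$ bounds up to a factor $2$. Alternatively, use backward averages $\frac1h\int_{t-h}^t u$ on $[T/2,T]$. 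Either way, all your Step 4 estimates then hold on the whole of $[0,T]$.

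\textbf{The case $p'=1$.} Part (i) allows $p'=1$, and there your equicontinuity argument for $\{u_h\}$ in $X$ is not justified. H\"older then gives only $\int_t^s\|u\|_X\le\|u\|_{L^1(X)}$, with no factor $|t-s|^\alpha$, and an $L^1$ bound gives no uniform integrability. Use the $X_1$ structure instead. From $u_h(s)-u_h(t)=\frac1h\int_t^s\bigl(u(\sigma+h)-u(\sigma)\bigr)\,d\sigma$ you get $\|u_h(s)-u_h(t)\|_{X_1}\le 2M'|s-t|/h$. Combined with the uniform bound on $\|u_h(t)\|_{X_0}$ for fixed $h$, Ehrling's inequality yields equicontinuity in $X$ for every $p'$. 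Equivalently, run Arzel\`a--Ascoli in $C([0,T];X_1)$ and upgrade exactly as in Step 3.

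\textbf{Reflexivity.} Your claim that reflexivity is never used is inaccurate. In Step 3 you assert that the $X_1$-continuous representative satisfies $\|u(t)\|_{X_0}\le M$ for every $t$; this relies on weak compactness of the $X_0$-ball. It fails for $X_0=C^1[0,1]$, $X=X_1=C[0,1]$, $u(t)=\sqrt{(x-\frac12)^2+t^2}$ at $t=0$. Under the lemma's reflexivity hypothesis the step is fine. Without it, replace the $X_0$-ball by its closure in $X$, which is compact and on which Ehrling's inequality persists by continuity.
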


\begin{lemma}
\label{lem:pc1}
    Consider functions $\phi, \sigma \in C^{1}(\Omega), \Omega \subset \mathbb{R}^{n}, n=1,2, |\Omega| < \infty$, $\epsilon > 0$. Then if $p \geq 2$ we have,
    \begin{equation}
        (|\nabla \phi|^{2} + \epsilon)^{\frac{p-2}{2}}|\nabla \phi|^{2} \geq |\nabla \phi|^{p},
\end{equation}

\begin{equation}
        (|\nabla \phi|^{2} + \epsilon)^{\frac{p-2}{2}}|\nabla \phi| \leq C\left(|\nabla \phi|^{p}+1\right),
\end{equation}

and 

\begin{equation}
        (|\phi|^{p-2}\phi - |\sigma|^{p-2}\sigma)\cdot (\phi - \sigma) \geq C |\phi - \sigma|^{p}.
\end{equation}

\end{lemma}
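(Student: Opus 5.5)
The three inequalities are pointwise algebraic facts. I will fix $x\in\Omega$ and prove each one for a vector $a=\nabla\phi(x)\in\mathbb{R}^n$, or for values $\phi(x),\sigma(x)$, which may be scalars or vectors. Throughout I use only that $p\ge 2$, so that $t\mapsto t^{(p-2)/2}$ is nondecreasing on $[0,\infty)$. The constants will depend only on $p$ and, in the second inequality, on an upper bound for $\epsilon$. I will take $0<\epsilon\le 1$, which is the regime where the lemma is used for the regularised system.

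\emph{First inequality.} Put $a=|\nabla\phi|$. Since $a^2+\epsilon\ge a^2$ and $(p-2)/2\ge 0$, we get $(a^2+\epsilon)^{\frac{p-2}{2}}\ge a^{p-2}$. Multiplying by $a^2\ge 0$ gives the claim, with constant one.

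\emph{Second inequality.} I split into two cases.
\begin{itemize}
\item If $a\le 1$, then $(a^2+\epsilon)^{\frac{p-2}{2}}a\le (1+\epsilon)^{\frac{p-2}{2}}$.
\item If $a\ge 1$, then $a^2+\epsilon\le (1+\epsilon)a^2$. Hence $(a^2+\epsilon)^{\frac{p-2}{2}}a\le (1+\epsilon)^{\frac{p-2}{2}}a^{p-1}\le (1+\epsilon)^{\frac{p-2}{2}}a^{p}$.
\end{itemize}
Both cases give the bound with $C=(1+\epsilon)^{\frac{p-2}{2}}\le 2^{\frac{p-2}{2}}$. This constant is uniform in $\epsilon\in(0,1]$, which is what the uniform-in-$\epsilon$ estimates require.

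\emph{Third inequality (strong monotonicity of $z\mapsto |z|^{p-2}z$).} Write $h=\phi-\sigma$ and $z_s=\sigma+sh$ for $s\in[0,1]$. For $p\ge2$ the map $F(z)=|z|^{p-2}z$ is $C^1$, with Jacobian
\[
DF(z)=|z|^{p-2}\Big(I+(p-2)\tfrac{z\otimes z}{|z|^2}\Big)\succeq |z|^{p-2}I
\]
for $z\neq 0$, and $DF(0)=0$ when $p>2$. The fundamental theorem of calculus along the segment then gives
\[
(F(\phi)-F(\sigma))\cdot h=\int_0^1 h\cdot DF(z_s)h\,ds\ \ge\ |h|^2\int_0^1|z_s|^{p-2}\,ds .
\]
This step is harmless even if the segment passes through $0$, because $F$ is $C^1$ there.

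It remains to show $\int_0^1|\sigma+sh|^{p-2}ds\ge c_p|h|^{p-2}$, and this is the only step needing an idea. We may assume $h\ne0$. The set $\{s\in[0,1]: |\sigma+sh|<|h|/4\}$ is contained in an interval of length less than $1/2$. Indeed, if $s_1,s_2$ both lie in it, then $|s_1-s_2|\,|h|\le |z_{s_1}|+|z_{s_2}|<|h|/2$. Hence $|z_s|\ge |h|/4$ on a set of measure at least $1/2$, and so
\[
\int_0^1|z_s|^{p-2}ds\ge \tfrac12\,4^{-(p-2)}|h|^{p-2}.
\]
Combining the two displays gives $(|\phi|^{p-2}\phi-|\sigma|^{p-2}\sigma)\cdot(\phi-\sigma)\ge 2^{-1}4^{2-p}|\phi-\sigma|^p$. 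This is the third inequality with $C=2^{-1}4^{2-p}$. The same argument works verbatim when $\phi,\sigma$ are replaced by $\nabla\phi,\nabla\sigma$, which is the form needed later when identifying the limit of the $p$-Laplacian flux.
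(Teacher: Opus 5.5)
Your proof is correct. The paper gives no argument for Lemma~\ref{lem:pc1}; it only lists it among the functional preliminaries as a standard fact. So your write-up supplies a proof the paper omits rather than paralleling one.

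The first two inequalities are handled as expected, with two small remarks. In your case $a\ge 1$ you actually obtain $(1+\epsilon)^{\frac{p-2}{2}}a^{p-1}$ before weakening to $a^{p}$. The bound $C(a^{p-1}+1)$ is the one really needed to place the regularised flux in $L^{\frac{p}{p-1}}$. Also, for unrestricted $\epsilon>0$ some dependence of $C$ on $\epsilon$ is unavoidable when $p>2$ (evaluate at $a=1$). Restricting to $\epsilon\le 1$ is therefore the right way to obtain uniformity.

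For the third inequality, the usual textbook route is the identity
\[
(|a|^{p-2}a-|b|^{p-2}b)\cdot(a-b)=\tfrac12\big(|a|^{p-2}+|b|^{p-2}\big)|a-b|^2+\tfrac12\big(|a|^{p-2}-|b|^{p-2}\big)\big(|a|^2-|b|^2\big).
\]
The last term is nonnegative, and combining with $|a-b|^{p-2}\le 2^{p-2}\big(|a|^{p-2}+|b|^{p-2}\big)$ gives the constant $2^{1-p}$ in two lines. Your route uses the fundamental theorem of calculus, $DF(z)\succeq|z|^{p-2}I$, and the measure bound on $\{s:|\sigma+sh|\ge|h|/4\}$. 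It is longer and yields the smaller constant $2^{3-2p}$. However, it needs only a lower bound on the Jacobian, so it extends to general fluxes $\nabla G$ with $D^2G(z)\succeq|z|^{p-2}I$. The identity, by contrast, exploits the radial form $w(|z|)z$ with $w$ nondecreasing. Both routes cover the regularised flux $(|z|^2+\epsilon)^{\frac{p-2}{2}}z$ uniformly in $\epsilon$, which is the form needed when identifying the limit $\Gamma$ by monotonicity.

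One minor imprecision: the sublevel set $\{s:|\sigma+sh|<|h|/4\}$ lies in an interval of length at most $1/2$, not strictly less. For $\sigma=-h/2$ it is exactly $(1/4,3/4)$. This does not affect your measure bound.
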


\begin{lemma}
\label{lem:pc1aa}
    Consider functions $\phi, \sigma \in C^{1}(\Omega), \Omega \subset \mathbb{R}^{n}, n=1,2, |\Omega| < \infty$, $\epsilon > 0$. Then if $1< p <2$ we have,
    \begin{equation}
        (|\nabla \phi|^{2} + \epsilon)^{\frac{p-2}{2}}|\nabla \phi|^{2} \geq \left(|\nabla \phi|^{2}+\epsilon \right)^{\frac{p}{2}} - (\epsilon)^{\frac{p}{2}}.
\end{equation}

\end{lemma}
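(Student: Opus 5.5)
The inequality is pointwise, so I will fix a point $x\in\Omega$ and set $s=|\nabla\phi(x)|^{2}\ge 0$. The claim then becomes the scalar inequality
\[
(s+\epsilon)^{\frac{p-2}{2}}\,s \;\ge\; (s+\epsilon)^{\frac p2}-\epsilon^{\frac p2}\qquad\text{for all } s\ge 0,
\]
with $1<p<2$ and $\epsilon>0$ fixed. No property of $\phi$ beyond the pointwise value of its gradient is used. In particular $\sigma$, the $C^1$ regularity and the dimension restriction play no role.

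The plan is to rewrite the left-hand side by splitting $s=(s+\epsilon)-\epsilon$:
\[
(s+\epsilon)^{\frac{p-2}{2}}\,s=(s+\epsilon)^{\frac p2}-\epsilon\,(s+\epsilon)^{\frac{p-2}{2}}.
\]
The claim is then equivalent to
\[
\epsilon\,(s+\epsilon)^{\frac{p-2}{2}}\le \epsilon^{\frac p2},
\]
which, after dividing by $\epsilon>0$, reads
\[
(s+\epsilon)^{\frac{p-2}{2}}\le\epsilon^{\frac{p-2}{2}}.
\]

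Since $1<p<2$, the exponent $\frac{p-2}{2}$ is negative, so $t\mapsto t^{\frac{p-2}{2}}$ is decreasing on $(0,\infty)$. Because $s+\epsilon\ge\epsilon>0$, the last inequality holds, with equality exactly when $s=0$. Reversing the equivalences gives the stated bound at every point, hence as functions on $\Omega$.

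There is no genuine obstacle. The only point needing care is the sign of the exponent: it is negative precisely because $p<2$, which is what separates this from \cref{lem:pc1}. As an alternative check, $f(s)=(s+\epsilon)^{p/2}-(s+\epsilon)^{(p-2)/2}s$ satisfies $f(0)=\epsilon^{p/2}$ and $f'(s)=\frac{2-p}{2}\,s\,(s+\epsilon)^{\frac{p-4}{2}}\ge 0$. This gives the reverse direction $f\ge\epsilon^{p/2}$, which is also consistent with the rewriting above.
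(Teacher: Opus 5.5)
The paper states this lemma without proof, so there is no route of its own to compare against. Your main argument is correct and complete. You split $s=(s+\epsilon)-\epsilon$ to get $(s+\epsilon)^{\frac{p-2}{2}}s=(s+\epsilon)^{\frac p2}-\epsilon(s+\epsilon)^{\frac{p-2}{2}}$, then use that $t\mapsto t^{\frac{p-2}{2}}$ is decreasing when $p<2$. This is exactly the elementary pointwise verification the paper leaves implicit. You are also right that $\sigma$, the $C^1$ hypothesis and the dimension play no role.

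Your closing ``alternative check'' is wrong, however, and should be fixed or deleted. The derivative of $f(s)=(s+\epsilon)^{\frac p2}-s(s+\epsilon)^{\frac{p-2}{2}}$ is
\[
f'(s)=\frac{p-2}{2}\,\epsilon\,(s+\epsilon)^{\frac{p-4}{2}}\le 0 ,
\]
not $\frac{2-p}{2}\,s\,(s+\epsilon)^{\frac{p-4}{2}}\ge 0$. This matches your own rewriting, which gives $f(s)=\epsilon(s+\epsilon)^{\frac{p-2}{2}}$ directly. Hence $f\le f(0)=\epsilon^{p/2}$, and that is the lemma itself, not a ``reverse direction''. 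The claim $f\ge\epsilon^{p/2}$ as written says the opposite of the statement. Combined with your main argument it would force $f\equiv\epsilon^{p/2}$, contradicting the strict inequality you correctly observed for $s>0$. So it is not consistent with the rewriting; with the sign corrected, it becomes a valid second proof.
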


\subsection{Regularised System}

We consider first the more difficult case $\theta=0$,

\begin{equation}\label{eq:ext3dr}
\begin{cases}
\displaystyle
(P_{\epsilon})_t = D\,\nabla \cdot
      \Bigl[\bigl(|\nabla P_{\epsilon}|^{2} + \epsilon \bigr)^{\frac{p-2}{p}}\nabla P_{\epsilon}
            - P_{\epsilon}\,\frac{\delta}{(W_{\epsilon}+\beta)(W_{\epsilon}+\gamma)}\, \nabla W_{\epsilon} \Bigr]
      + rP_{\epsilon}\ \left(1-\frac{P_{\epsilon}}{K}\right),
\\[10pt]
\displaystyle
(W_{\epsilon})_t = D_{1} \Delta W_{\epsilon} + \Bigl(\frac{P_{\epsilon}}{1+\nu W_{\epsilon}}-\mu\Bigr)W_{\epsilon},
\end{cases}
\end{equation}

\begin{equation}
\Bigl[|\nabla P_{\epsilon}|^{2} + \epsilon \bigr)^{\frac{p-2}{p}}\nabla P_{\epsilon}
            - P_{\epsilon}\,\frac{\delta}{(W_{\epsilon}+\beta)(W_{\epsilon}+\gamma)}\, \nabla W_{\epsilon} \Bigr] \cdot \nu = 0 \ , 
            \nabla W_{\epsilon} \cdot \nu =0.
\end{equation}
\begin{remark}
    We require in general the source term on the $P_{\epsilon}$ equation to be of the form, $f(P_{\epsilon}) \leq b P_{\epsilon} - \mu (P_{\epsilon})^{r}, r\geq 1, b,\mu>0$.
\end{remark}

\begin{remark}
    In all of the estimates, $C, C_{i}$ are generic constants and can change in value from line to line, and within the same line if so required.
\end{remark}
\begin{lemma}
Consider \eqref{eq:ext3dr}, then we have that,
\begin{equation}
    \int_{\Omega} P_{\epsilon} dx \leq 
    \max{\left(\int_{\Omega}P_{0}dx, C \right)} 
    \end{equation}

\begin{equation}
   \int^{t}_{t-1} \int_{\Omega} P_{\epsilon} dx ds  \leq 
     C_{2} 
    \end{equation}
   
    \end{lemma}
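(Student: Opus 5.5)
Integrate the $P_{\epsilon}$ equation over $\Omega$ and use the zero-flux boundary condition. The regularised $p$-Laplacian flux and the chemotactic flux both integrate to zero, since their combined normal component vanishes on $\partial\Omega$. Writing $y(t)=\int_\Omega P_\epsilon(x,t)\,dx$, we are left with
\[
y'(t) = r\int_\Omega P_\epsilon\,dx - \frac{r}{K}\int_\Omega P_\epsilon^2\,dx .
\]
Nonnegativity of $P_\epsilon$ follows from the maximum principle for the regularised (nondegenerate) problem: $P_\epsilon\equiv 0$ is a subsolution and the chemotactic term is linear in $P_\epsilon$.

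By Cauchy--Schwarz, $\int_\Omega P_\epsilon^2\,dx \ge |\Omega|^{-1} y^2$. This gives the logistic differential inequality
\[
y' \le r y - \frac{r}{K|\Omega|}\, y^2 .
\]
Comparison with the logistic ODE yields $y(t)\le \max\bigl(y(0),\,K|\Omega|\bigr)$, which is the first claim with $C=K|\Omega|$. More generally, under the Remark's source $f(P)\le bP-\mu P^{r}$ with $r>1$, Jensen's inequality gives $y'\le b y - \mu|\Omega|^{1-r}y^{r}$ and the same comparison applies. For $r=1$ with $b<\mu$ one instead gets decay.

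For the time-averaged bound, integrate the first bound over $(t-1,t)$ to obtain $\int_{t-1}^t\int_\Omega P_\epsilon\,dx\,ds\le \max(\int_\Omega P_0,\,C)=:C_2$. I would also record the stronger companion estimate: integrating the identity for $y'$ over $(t-1,t)$ gives
\[
\frac{r}{K}\int_{t-1}^{t}\int_\Omega P_\epsilon^2\,dx\,ds \le y(t-1)+ r\int_{t-1}^{t} y\,ds \le C_2' ,
\]
uniformly in $\epsilon$ and $t\ge1$. For $t<1$, integrate from $0$ instead. This space-time $L^2$ bound is what is really needed later when controlling the chemotactic term.

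There is no real obstacle here. The only point needing care is that the boundary flux integral genuinely vanishes for the regularised system, which requires enough regularity of the classical regularised solution up to the boundary. I would take this from standard quasilinear parabolic theory with $\epsilon>0$ on the interval of existence. All constants depend only on $r$, $K$, $|\Omega|$ and $\int_\Omega P_0$, never on $\epsilon$.
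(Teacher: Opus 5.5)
Your proposal is correct and follows the paper's route: integrate the $P_\epsilon$ equation over $\Omega$, remove the flux using the zero-flux condition, compare with an ODE, and then integrate over $(t-1,t)$. Your ODE step is more precise than the paper's: its appeal to ``Gronwall's lemma'' by itself only gives growth like $e^{rt}$, whereas your Cauchy--Schwarz bound $\int_\Omega P_\epsilon^2\,dx \ge |\Omega|^{-1}\bigl(\int_\Omega P_\epsilon\,dx\bigr)^2$ closes a logistic inequality, and that is exactly what gives the time-uniform bound $\max\bigl(\int_\Omega P_0\,dx,\,K|\Omega|\bigr)$.
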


 \begin{proof}
    The estimates follow via integartion of the equation for $P_{\epsilon}$ in space, followed by time integartion, and Gronwall's lemma.
        \end{proof}
Here $C$ depends on problem parameters, and the measure of the domain $|\Omega|$.

        \begin{lemma}
Consider \eqref{eq:ext3dr}, then we have that,
\begin{eqnarray}
   && \frac{d}{dt}\left(\int_{\Omega} (P_{\epsilon} +1)^{m}dx\right)+Dm(m-1) \int_{\Omega} (1+P_{\epsilon})^{m-2}|\nabla P_{\epsilon}|^{p} dx  \nonumber \\ 
   && \leq \int_{\Omega} (1+P_{\epsilon})^{m-2}\frac{\delta P_{\epsilon}}{(W_{\epsilon}+\beta)(W_{\epsilon}+\gamma)} \nabla P_{\epsilon} \nabla W_{\epsilon}dx + m(m-1)(1+(\epsilon)^{\frac{p}{2}})\int_{\Omega} (1+P_{\epsilon})^{m-2}dx \nonumber \\ 
   && + m(r)\int_{\Omega} (1+P_{\epsilon})^{m}dx
   -\frac{rm}{K}\int_{\Omega} (1+P_{\epsilon})^{m-1}(P_{\epsilon})^{2} dx .\nonumber \\
        \end{eqnarray}
\end{lemma}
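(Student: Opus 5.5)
The estimate is the standard $L^m$ energy identity for the regularised system, so I would test the $P_\epsilon$-equation of \eqref{eq:ext3dr} against $m(1+P_\epsilon)^{m-1}$ for $m>1$. The time derivative gives $\frac{d}{dt}\int_\Omega(1+P_\epsilon)^m\,dx$. I would integrate the divergence term by parts; the boundary contribution vanishes because of the zero-flux condition on the full flux, i.e.\ diffusive plus chemotactic. This produces
\[
-Dm(m-1)\int_\Omega (1+P_\epsilon)^{m-2}\bigl(|\nabla P_\epsilon|^2+\epsilon\bigr)^{\frac{p-2}{2}}|\nabla P_\epsilon|^2\,dx
\]
together with the cross term
\[
Dm(m-1)\int_\Omega (1+P_\epsilon)^{m-2}\frac{\delta P_\epsilon}{(W_\epsilon+\beta)(W_\epsilon+\gamma)}\nabla P_\epsilon\cdot\nabla W_\epsilon\,dx .
\]
The cross term is simply kept on the right-hand side, as in the statement. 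The constant in front of it is harmless, since $C$ is generic.

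Next I would bound the diffusion term from below so that it yields $|\nabla P_\epsilon|^p$. I am reading the exponent in \eqref{eq:ext3dr} as $\frac{p-2}{2}$, consistent with Lemmas~\ref{lem:pc1}--\ref{lem:pc1aa}. For $1<p<2$, Lemma~\ref{lem:pc1aa} gives
\[
(|\nabla P_\epsilon|^2+\epsilon)^{\frac{p-2}{2}}|\nabla P_\epsilon|^2\ \ge\ (|\nabla P_\epsilon|^2+\epsilon)^{\frac p2}-\epsilon^{\frac p2}\ \ge\ |\nabla P_\epsilon|^p-\epsilon^{\frac p2}.
\]
Multiplying by $Dm(m-1)(1+P_\epsilon)^{m-2}$ moves $Dm(m-1)\int_\Omega(1+P_\epsilon)^{m-2}|\nabla P_\epsilon|^p\,dx$ to the left-hand side. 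It leaves an error term $Dm(m-1)\epsilon^{p/2}\int_\Omega(1+P_\epsilon)^{m-2}\,dx$, which is dominated by $m(m-1)(1+\epsilon^{p/2})\int_\Omega(1+P_\epsilon)^{m-2}\,dx$ up to the generic constant. For $p\ge 2$ the first inequality of Lemma~\ref{lem:pc1} gives the bound directly, with no error term.

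For the logistic term I would write
\[
m\int_\Omega (1+P_\epsilon)^{m-1} rP_\epsilon\Bigl(1-\frac{P_\epsilon}{K}\Bigr)dx
= rm\int_\Omega (1+P_\epsilon)^{m-1}P_\epsilon\,dx-\frac{rm}{K}\int_\Omega(1+P_\epsilon)^{m-1}P_\epsilon^2\,dx .
\]
Then $P_\epsilon\le 1+P_\epsilon$ (with $P_\epsilon\ge0$) bounds the first piece by $rm\int_\Omega(1+P_\epsilon)^m\,dx$. Collecting the four pieces gives the stated inequality.

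The only genuinely delicate point is the justification rather than any step of the algebra. One needs enough regularity and nonnegativity of the regularised solution $P_\epsilon$ to integrate by parts and to use $P_\epsilon\le1+P_\epsilon$. I would take this from classical parabolic theory for the nondegenerate regularised problem together with the comparison principle, and I expect that to be the main obstacle to make fully rigorous.
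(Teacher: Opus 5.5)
Your route is the paper's: test the $P_\epsilon$-equation with $m(1+P_\epsilon)^{m-1}$, integrate by parts using the zero-flux condition, and close the logistic term with $P_\epsilon\ge 0$ and $P_\epsilon\le 1+P_\epsilon$. You also make explicit something the paper leaves implicit: the $|\nabla P_\epsilon|^p$ term and the $\epsilon^{p/2}$ error both come from Lemmas~\ref{lem:pc1}--\ref{lem:pc1aa}, with the exponent read as $\frac{p-2}{2}$. That part is correct. The error term $Dm(m-1)\epsilon^{p/2}\int_\Omega(1+P_\epsilon)^{m-2}\,dx$ is nonnegative, and it is bounded by the stated $m(m-1)(1+\epsilon^{p/2})\int_\Omega(1+P_\epsilon)^{m-2}\,dx$ once $\epsilon\le D^{-2/p}$. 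No constant is needed there.

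The step that does not go through as you justify it is the cross term. Integration by parts produces it with the factor $Dm(m-1)$, and this factor cannot be dismissed as a ``generic constant''. The statement has no constant in that slot. Moreover, the integrand contains $\nabla P_\epsilon\cdot\nabla W_\epsilon$, which has no sign. Writing $X$ for the cross-term integral, neither $Dm(m-1)X\le X$ nor $X\le Dm(m-1)X$ holds in general.

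What your computation proves is the inequality with $Dm(m-1)X$ on the right-hand side, in place of $X$. The paper's own proof proves exactly the same thing, since it drops the same factor. You should state the lemma in that corrected form rather than claim the coefficient $1$. The factor must then be carried into the constant $C_3$ of the Young's-inequality step in Lemma~\ref{lem:lpb}, where the cross term is bounded in absolute value.
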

        \begin{proof}
           Consider first the special case r=2. We multiply the first by $m((P_{\epsilon} +1)^{m-1}$, and integrate by parts to obtain

\begin{eqnarray}
   && \frac{d}{dt}\left(\int_{\Omega} (P_{\epsilon} +1)^{m}dx\right)+Dm(m-1) \int_{\Omega} (1+P_{\epsilon})^{m-2}|\nabla P_{\epsilon}|^{p} dx  \nonumber \\ 
   && \leq \int_{\Omega} (1+P_{\epsilon})^{m-2}\frac{\delta P_{\epsilon}}{(W_{\epsilon}+\beta)(W_{\epsilon}+\gamma)} \nabla P_{\epsilon} \nabla W_{\epsilon}dx + m(m-1)\int_{\Omega} (1+P_{\epsilon})^{m-2}dx \nonumber \\ 
   && + m(1+\mu)\int_{\Omega} (1+P_{\epsilon})^{m-1}dx -2^{1-r}\int_{\Omega} (1+P_{\epsilon})^{m+r-1} dx \nonumber \\
   &&\leq \int_{\Omega} \frac{\delta}{\beta \gamma}(1+P_{\epsilon})^{m-1} \nabla P_{\epsilon} \nabla W_{\epsilon}dx + m(m-1)(1+(\epsilon)^{\frac{p}{2}})\int_{\Omega} (1+P_{\epsilon})^{m-2}dx \nonumber \\ 
   && + m(r)\int_{\Omega} (1+P_{\epsilon})^{m}dx -\frac{rm}{K}\int_{\Omega} (1+P_{\epsilon})^{m-1}(P_{\epsilon})^{2} dx. \nonumber \\
        \end{eqnarray}

            This follows via positivity of the solutions, and the fact that $P_{\epsilon}<1+P_{\epsilon}$.
        \end{proof}

\begin{lemma}
\label{lem:lpb}
Consider \eqref{eq:ext3dr}, then we have that for $p > \frac{3}{2}$, and all other parameters positive, and for initial data $P_{0}(x) \in C^{\omega}(\Omega), 0 < \omega <1, W_{0}(x) \in C^{2}(\Omega)$, chosen sufficiently small $||P_{0}||_{\infty} < C, ||W_{0}||_{\infty}<C$, for any $m>1$, we have,
\begin{equation}
    \int_{\Omega} (1+P_{\epsilon})^{m} dx \leq 
    C.
    \end{equation}
Here $C$ is a pure constant that depends only on the problem parameters.
   
    \end{lemma}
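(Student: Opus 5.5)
The plan is to start from the differential inequality of the preceding lemma for $y(t)=\int_\Omega (1+P_\epsilon)^m\,dx$. The goal is to show that, for small data, the chemotactic cross term can be absorbed into the degenerate dissipation $Dm(m-1)\int_\Omega(1+P_\epsilon)^{m-2}|\nabla P_\epsilon|^p$ and the logistic damping $-\frac{rm}{K}\int_\Omega (1+P_\epsilon)^{m-1}P_\epsilon^2$. The resulting ODE inequality $y'\le C-cy^{\kappa}$ with $\kappa>1$ then gives a uniform bound.

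\textbf{Step 1: the cross term.} Using $\frac{\delta P_\epsilon}{(W_\epsilon+\beta)(W_\epsilon+\gamma)}\le \frac{\delta}{\beta\gamma}(1+P_\epsilon)$, the cross term is at most $C\int_\Omega (1+P_\epsilon)^{m-1}|\nabla P_\epsilon||\nabla W_\epsilon|$. Write $(1+P_\epsilon)^{m-1}|\nabla P_\epsilon|=(1+P_\epsilon)^{\frac{m-2}{p}}|\nabla P_\epsilon|\cdot(1+P_\epsilon)^{m-1-\frac{m-2}{p}}$ and apply Young's inequality with exponents $p$ and $p'=p/(p-1)$. Half of the dissipation absorbs the first factor, which leaves
\[
C\int_\Omega (1+P_\epsilon)^{(m-1-\frac{m-2}{p})p'}|\nabla W_\epsilon|^{p'}\,dx .
\]
A short computation gives $(m-1-\frac{m-2}{p})p'=m-\frac{p-2}{p-1}$, which is larger than $m$ when $p<2$. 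Apply H\"older's inequality to split this into $\|(1+P_\epsilon)\|_{L^{m+1}}^{\,a}$ times $\|\nabla W_\epsilon\|_{L^{q}}^{p'}$ for a suitable $q$. The logistic term gives $-c\int_\Omega(1+P_\epsilon)^{m+1}+C\int_\Omega(1+P_\epsilon)^{m}$.

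\textbf{Step 2: control $\nabla W_\epsilon$ by $P_\epsilon$.} Rewrite the $W$ equation as $\zeta_t=D_1\Delta\zeta-\mu\zeta+g$ with $g=\frac{P_\epsilon W_\epsilon}{1+\nu W_\epsilon}\le \frac{P_\epsilon}{\nu}$. Lemma~\ref{lem:2.1prime}(ii) with $n\le2$ then gives
\[
\sup_t\|\nabla W_\epsilon\|_{L^q}\le C\Big(\sup_t\|P_\epsilon\|_{L^{s}}+\|\nabla W_0\|_\infty\Big)
\]
whenever $\frac12+\frac{n}{2}(\frac1s-\frac1q)<1$. Choosing $s=m+1$ or smaller and applying Lemma~\ref{lem:wlp} interpolation between $L^1$ (bounded by the mass lemma) and $L^{m+1}$ makes the cross term at most $C\,Y(t)^{\kappa_1}+C$ with $Y(t)=\sup_{s\le t}\|1+P_\epsilon\|_{L^{m+1}}$. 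Here the restriction $p>\frac32$ should enter: $p'<3$, so $|\nabla W_\epsilon|^{p'}$ lies in an $L^{q/p'}$ space that the parabolic smoothing can reach with the exponent bookkeeping closing in $n\le2$.

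\textbf{Step 3: closing via smallness (the main obstacle).} The powers do not obviously balance, because the cross-term power $\kappa_1$ may exceed the damping power $\frac{m+1}{m}$ in $y$. I would therefore use a continuity (bootstrap) argument. Let $T^*$ be the maximal time on which $\|P_\epsilon\|_{L^{m+1}}\le 2M$, with $M$ determined by the initial data. On $[0,T^*)$ the cross term is at most $C(M)\|\nabla W_\epsilon\|^{p'}_{L^q}\int(1+P_\epsilon)^{m+1}$ type quantities. For $\|P_0\|_\infty,\|W_0\|_\infty$ small, and hence $\|\nabla W_\epsilon\|$ small via Lemma~\ref{lem:2.1prime}, the prefactor is below $c/2$, so it is absorbed by the logistic damping. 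Gronwall or ODE comparison then yields $y(t)\le\max(y(0),C)$, which contradicts the exit at $T^*$. All constants are independent of $\epsilon$ because the $\epsilon$-terms in the preceding lemma enter only as $(1+\epsilon^{p/2})$ and are uniformly bounded for $\epsilon\le1$.

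The delicate point is making the smallness of $\nabla W_\epsilon$ genuinely uniform in time, since $g$ depends on $P_\epsilon$ itself. I expect to handle this by running the bootstrap jointly on $\|P_\epsilon\|_{L^{m+1}}$ and $\sup\|\nabla W_\epsilon\|_{L^q}$.
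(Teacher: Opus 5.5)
Steps~1--2 follow the paper's route: the same Young split, leaving the power $m-\frac{p-2}{p-1}=m+\frac{p}{p-1}-2$, followed by a semigroup bound for $\nabla W_\epsilon$ in terms of $P_\epsilon$. Step~3, however, does not close, and it is where the lemma is actually decided.

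Your continuity argument asserts that $\|P_\epsilon(\cdot,t)\|_{L^{m+1}}\le 2M$ for all $t$, with $M$ small because the data are small. This is false in general. The kinetics $rP(1-P/K)$ make $P=0$ unstable. Take constant data $P_0\equiv\eta\ll 1$ and $W_0\equiv\mathrm{const}$, which the hypotheses allow. The solution stays spatially homogeneous and $P_\epsilon(t)\to K$.

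The same obstruction shows up in your starting inequality. It contains source terms that do not involve the cross term: $mr\int_\Omega(1+P_\epsilon)^m$, $m(m-1)(1+\epsilon^{p/2})\int_\Omega(1+P_\epsilon)^{m-2}$ and $\frac{rm}{K}\int_\Omega(1+P_\epsilon)^{m-1}$. Your final comparison therefore only yields $y(t)\le\max(y(0),C)$, where $C$ is fixed by $r,K,m,|\Omega|$. By the homogeneous example, $C$ is necessarily at least $(1+K)^m|\Omega|$. This does not return you to the bootstrap set, so there is no contradiction at $T^*$. There is also an exponent mismatch: $y$ is an $L^m$-type quantity, while your exit condition is in $L^{m+1}$.

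Once $\sup_t\|P_\epsilon\|$ is allowed to be of order $K$, Lemma~\ref{lem:2.1prime}(ii) with $g=\frac{P_\epsilon W_\epsilon}{1+\nu W_\epsilon}\le P_\epsilon/\nu$ gives only
$\|\nabla W_\epsilon(\cdot,t)\|_{L^q}\le\tilde C\bigl(\nu^{-1}\sup_{\tau<t}\|P_\epsilon(\cdot,\tau)\|_{L^{s}}+\|\nabla W_0\|_{L^\infty}\bigr)$, which is not small. Even near $t=0$, the term $\|\nabla W_0\|_{L^\infty}$ is uncontrolled, because the hypothesis only makes $\|W_0\|_\infty$ small. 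So smallness of the data cannot push the prefactor of $\int_\Omega(1+P_\epsilon)^{m+1}$ below the logistic damping constant.

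Within your scheme, the only other source of smallness is the Young remainder constant. It scales like $(\delta/(\beta\gamma))^{p'}$ times an inverse power of $Dm(m-1)$. Closing that way is a weak-chemotaxis condition on $\delta/(\beta\gamma D)$ relative to $r/K$, not a condition on the data alone.

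The paper does not try to keep $\nabla W_\epsilon$ small. After the H\"older split with $\theta_1>1$, it bounds $\int_\Omega|\nabla W_\epsilon|^{p\theta_1/(p-1)}$ via the Neumann semigroup, Gagliardo--Nirenberg and Lemma~\ref{lem:2.1prime}. It then converts the $W$-contribution into a power of $X=\int_\Omega(1+P_\epsilon)^m$ and compares with $X'=-X+C_1X^{p\theta_1/(p-1)}$. It imposes the threshold $X(0)\le 1/C_1$ with $C_1=(4\delta/(\beta\gamma D))^2$ (for $\theta_1=2$, $p=\frac32$), so its smallness is tied to $\delta/(\beta\gamma D)$. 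Your argument needs some such coupling, or a threshold/invariant-region argument for the full scalar inequality with the source terms kept.

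Finally, in your own computation the hypothesis $p>\frac32$ enters as $\frac{2-p}{p-1}<1$, not through $p'<3$ on the $W$ side. That inequality puts the leftover power $m+\frac{2-p}{p-1}$ below the damping power $m+1$, which is what permits the H\"older/Young absorption. The paper instead uses $p>\frac32$ through $W^{1,p}(\Omega)\hookrightarrow L^6(\Omega)$ in $n=2$ (Lemma~\ref{lem:pe21}).
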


\begin{proof}

Now we use Lemma's \ref{lem:pc1}-\ref{lem:pc1aa}, and whether $1<p<2$ or $p>2$, we obtain,

\begin{eqnarray}
   && \frac{d}{dt}\left(\int_{\Omega} (P_{\epsilon} +1)^{m}dx\right)+\frac{D}{2}m(m-1) \int_{\Omega} (1+P_{\epsilon})^{m-2}|\nabla P_{\epsilon}|^{p} dx  \nonumber \\ 
&\leq& \frac{\delta}{\beta \gamma}\int_{\Omega} (1+P_{\epsilon})^{m-1} \nabla P_{\epsilon} \nabla W_{\epsilon}dx  + m(m-1)(1+(\epsilon)^{\frac{p}{2}})\int_{\Omega} (1+P_{\epsilon})^{m-2}dx \nonumber \\ 
   && + m(r)\int_{\Omega} (1+P_{\epsilon})^{m}dx -\frac{rm}{K}\int_{\Omega} (1+P_{\epsilon})^{m-1}(P_{\epsilon})^{2} dx. \nonumber  \\
        \end{eqnarray}

We also have 
\begin{equation}
\label{eq:mm1}
   \frac{1}{2} \int_{\Omega} (1+P_{\epsilon})^{m-1}2((P_{\epsilon})^{2}+1) dx \nonumber \\ 
\geq \frac{1}{2} \int_{\Omega} (1+P_{\epsilon})^{m-1}(1+P_{\epsilon})^{2}dx,\nonumber 
        \end{equation}
thus 

\begin{eqnarray}
\label{eq:11e}
   && \frac{d}{dt}\left(\int_{\Omega} (P_{\epsilon} +1)^{m}dx\right)+\frac{D}{2}m(m-1) \int_{\Omega} (1+P_{\epsilon})^{m-2}|\nabla P_{\epsilon}|^{p} dx  
   + \frac{rm}{2K} \int_{\Omega} (1+P_{\epsilon})^{m+1}dx\nonumber \\ 
&\leq& \frac{\delta}{\beta \gamma}\int_{\Omega} (1+P_{\epsilon})^{m-1} \nabla P_{\epsilon} \nabla W_{\epsilon}dx  + m(m-1)(1+(\epsilon)^{\frac{p}{2}})\int_{\Omega} (1+P_{\epsilon})^{m-2}dx \nonumber \\ 
   && + m(r)\int_{\Omega} (1+P_{\epsilon})^{m}dx 
   +\frac{rm}{K} \int_{\Omega} (1+P_{\epsilon})^{m-1}dx.
   \nonumber  \\
        \end{eqnarray}

        We state the following lemma,
        \begin{lemma}
        \label{lem:pe21}
            Consider \eqref{eq:ext3dr}. Consider $2>p>\frac{3}{2}$, then we have that 
\begin{equation}
 \int (1+P_{\epsilon})^{m-1}dx \leq  \left(\int (1+P_{\epsilon})^{4m-2}dx\right)^{\frac{1}{4}} \leq \left( \frac{2m-1}{3}\right)C(m) \int_{\Omega} (1+P_{\epsilon})^{m-2}|\nabla P_{\epsilon}|^{p} dx.
        \end{equation}
\begin{proof}
    The proof follows via the embedding of $W^{1,p}(\Omega) \hookrightarrow \hookrightarrow L^{6}(\Omega)$, for $n=2$, $p>3/2$. Also via, $(\int_{\Omega}fdx)^{4} \leq |\Omega|^{3}\int_{\Omega}f^{4}dx$.
\end{proof}

        \end{lemma}
Note via Young's inequality with $\epsilon$ we have,

\begin{eqnarray}
   && \frac{\delta}{\beta \gamma}\int_{\Omega} (1+P_{\epsilon})^{m-1} \nabla P_{\epsilon} \nabla W_{\epsilon}dx  \nonumber \\ 
&\leq& \frac{Dm(m-1)}{2}\int_{\Omega} (1+P_{\epsilon})^{m-2}|\nabla P_{\epsilon}|^{p} dx + C_{3}\int_{\Omega} (1+P_{\epsilon})^{\left(m+\frac{p}{p-1}-2\right)}|\nabla W_{\epsilon}|^{\frac{p}{p-1}} dx. \nonumber \\
        \end{eqnarray}

Here, $\epsilon^{p}=\frac{\beta \gamma}{\delta}\frac{Dm(m-1)}{2}$, which dictates that $C_{3}=\left(\frac{2 \delta}{\beta \gamma Dm(m-1)}\right)^{\frac{1}{p-1}}$.

Inserting the above estimate into \eqref{eq:11e}, and via Holder-Young's inequality, and lemma \ref{lem:pe21} above, we have,

\begin{eqnarray}
   && \frac{d}{dt}\left(\int_{\Omega} (P_{\epsilon} +1)^{m}dx\right)+\frac{Dm(m-1)}{4} \int_{\Omega} (1+P_{\epsilon})^{m-2}|\nabla P_{\epsilon}|^{p} dx  \nonumber \\ 
&\leq&  \left(\frac{2 \delta\theta_{1}}{\beta \gamma D}\right)^{\frac{1}{p-1}}\int_{\Omega} |\nabla W_{\epsilon}|^{\frac{p}{p-1}\theta_{1}} dx + C_{6}\int_{\Omega} (1+P_{\epsilon})^{\left(m+\frac{p}{p-1}-2\right)\left(\frac{\theta_{1}}{\theta_{1}-1}\right)} + m(r)\int_{\Omega} (1+P_{\epsilon})^{m}dx \nonumber \\
&& -\frac{rm}{K}\int_{\Omega} (1+P_{\epsilon})^{m-1}(P_{\epsilon})^{2} dx.  \nonumber \\
        \end{eqnarray}

Here, $\theta_{1}>1$. Note, via Neumann semi-group,


\begin{eqnarray}
\label{eq:wwp}
   && \int_{\Omega} |\nabla W_{\epsilon}|^{\frac{p\theta_{1}}{p-1}} dx  \nonumber \\ 
&\leq&  C_{5}||\nabla W_{\epsilon}||^{\frac{p\theta_{1}}{p-1}}_{\frac{p\theta_{1}}{p-1}} \leq C_{6}||(-\Delta + 1)W_{\epsilon}||^{\frac{p\theta_{1}}{p-1}(a_{2})}_{L^{m}(\Omega)}||W_{\epsilon}||^{\frac{p\theta_{1}}{p-1}(1-a_{2})}_{L^{q}(\Omega)} \leq C_{7}||(-\Delta + 1)W_{\epsilon}||^{\frac{p\theta_{1}}{p-1}(a_{2})}_{L^{m}(\Omega)}.\nonumber \\
 \end{eqnarray}

Next via lemma \ref{lem:2.1prime}, we have,

\begin{equation}
    \int^{t}_{t-1}||(-\Delta + 1)W_{\epsilon}||^{\frac{p\theta_{1}}{p-1}(a_{2})}_{L^{m}(\Omega)}ds \leq \sup_{\tau \in (0,t)}\left(||W_{\epsilon}||^{m}_{L^{m}(\Omega)}\right)^{\frac{p\theta_{1}}{p-1}},
\end{equation}
the mean value theorem for integrals gives the existence of a time $t^{*}$ s.t,

\begin{equation}
    (t-(t-1)||(-\Delta + 1)W_{\epsilon}(t^{*})||^{\frac{p\theta_{1}}{p-1}(a_{2})}_{L^{m}(\Omega)}ds \leq \sup_{\tau \in (0,t)}\left(||P_{\epsilon}||^{m}_{L^{m}(\Omega)}\right)^{\frac{p\theta_{1}}{p-1}},
\end{equation}

since t is arbitrary in $(0,T^{\epsilon}_{max})$, we have that for any $t$ in this time interval,

\begin{eqnarray}
   && \frac{d}{dt}\left(\int_{\Omega} (P_{\epsilon} +1)^{m}dx\right)+ \frac{Crm}{K}\int_{\Omega} (1+P_{\epsilon})^{m+1} dx \nonumber \\ 
&\leq&  \left(\frac{2 (m^{p-1})\delta\theta_{1}}{\beta \gamma D}\right)^{\frac{1}{p-1}}\left(\int_{\Omega} (1+P_{\epsilon})^{m} dx \right)^{\frac{p\theta_{1}}{p-1}} + C_{6}\int_{\Omega} (1+P_{\epsilon})^{\left(m+\frac{p}{p-1}-2\right)\left(\frac{\theta_{1}}{\theta_{1}-1}\right)} \nonumber \\
&& + m(r)\int_{\Omega} (1+P_{\epsilon})^{m}dx . \nonumber \\
        \end{eqnarray}

This enables us to compare to the ODE, for large $m$,
$\frac{dX}{dt} =- X + C_{1}X^{\frac{p\theta_{1}}{p-1}} $.

Choosing $\theta_{1}=2, p=\frac{3}{2}$, we obtain $C_{1} = (\frac{4 \delta}{\beta \gamma D})^{2}$,
Now, uniform boundedness of $X$, is ensured as long as the initial data satisfies, $X(0)=\int_{\Omega} (1+P_{\epsilon}(0))^{m} dx \leq \frac{1}{C_{1}}$. Taking supremum yields,
\begin{equation}
\boxed{||P_{\epsilon}(0)||_{\infty} \leq \left(\frac{\beta \gamma D}{4 \delta}\right)^{2}.}
\end{equation}
\end{proof}

\subsection{Global Existence of Weak Solutions}

In this section, we prove global existence of weak solutions to \eqref{eq:main_model}. The strategy is to consider  system \eqref{eq:ext3dr}, and derive uniform apriori estimates (in the parameter $\epsilon$). Next we pass to the limit as $\epsilon \rightarrow 0$. We begin by stating the following lemma,

\begin{lemma}
\label{lem:ge11n}
    Consider \eqref{eq:ext3dr}. Let $p>\frac{3}{2}$, and all other parameters be positive. If the initial data $P_{0}(x) \in C^{\omega}(\Omega), 0 < \omega <1, W_{0}(x) \in C^{2}(\Omega)$, is sufficiently small $||P_{0}||_{\infty} < C, ||W_{0}||_{\infty}<C$, then for any $1>> \epsilon > 0$, there exists a constant $C$, depending only on the problem parameters, such that we have,

\begin{equation}
\label{eq:11l}
    ||P^{\epsilon}||_{L^{\infty}(\Omega)} \leq C,
\end{equation}

\begin{equation}
\label{eq:12l}
    ||W^{\epsilon}||_{W^{1,\infty}(\Omega)} \leq C,
\end{equation}
and
\begin{equation}
\label{eq:14l}
\int^{t}_{0}\int_{\Omega} 
|\nabla P^{\epsilon}(.,s)|^{p} dx ds \leq C.
\end{equation}
\end{lemma}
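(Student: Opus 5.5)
The plan is to start from the uniform-in-$\epsilon$ $L^m$ bounds of Lemma \ref{lem:lpb}. With small data, these give $\sup_t\int_\Omega(1+P_\epsilon)^m\,dx\le C$ for every finite $m>1$, with $C$ independent of $\epsilon$. These bounds are then bootstrapped to \eqref{eq:11l}--\eqref{eq:14l} in three steps: first the $W$ estimate, then the $L^\infty$ bound on $P$, then the gradient integral.

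\emph{Step 1 (bound on $W_\epsilon$).} Rewrite the second equation as $\zeta_t=D_1\Delta\zeta-\mu\zeta+g$ with $\zeta=W_\epsilon$ and $g=\frac{P_\epsilon W_\epsilon}{1+\nu W_\epsilon}$. Since $0\le g\le P_\epsilon/\nu$, we have $\sup_s\|g(\cdot,s)\|_{L^{m}}\le C$ for any $m$. In $n\le2$, choose $m$ large so that $\frac12+\frac n2\cdot\frac1m<1$ holds with $q=\infty$. Lemma \ref{lem:2.1prime}(i),(ii) then gives $\|W_\epsilon\|_{L^\infty}+\|\nabla W_\epsilon\|_{L^\infty}\le C(1+\|W_0\|_{W^{1,\infty}})$, which is \eqref{eq:12l}. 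Positivity of $W_\epsilon$ comes from the maximum principle, so the chemotactic coefficient satisfies $\frac{\delta}{(W+\beta)(W+\gamma)}\le\delta/(\beta\gamma)$.

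\emph{Step 2 ($L^\infty$ bound on $P_\epsilon$).} With $\nabla W_\epsilon$ bounded, return to the $m$-th moment inequality \eqref{eq:11e}. The cross term is now estimated by Young's inequality with the simpler weight $\int(1+P_\epsilon)^{m-2+\frac{p}{p-1}}\,dx$. The logistic term $\frac{rm}{2K}\int(1+P_\epsilon)^{m+1}$ absorbs this power, since $\frac{p}{p-1}<3$ when $p>3/2$. This gives, for each $m$,
\[
\frac{d}{dt}\int_\Omega(1+P_\epsilon)^m\,dx+c\,m\int_\Omega(1+P_\epsilon)^{m-2}|\nabla P_\epsilon|^p\,dx\le C m^{\kappa}\Big(\int_\Omega(1+P_\epsilon)^{m}\,dx+1\Big)
\]
for some fixed $\kappa$. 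We then run a Moser--Alikakos iteration with $m_k=2^k$. Write $\int(1+P)^{m-2}|\nabla P|^p=c_m\int|\nabla (1+P)^{(m-2+p)/p}|^p$, and use the Gagliardo--Nirenberg inequality (Lemma \ref{lem:gns}) to interpolate $\|(1+P)^{m_k}\|_{L^1}$ between the gradient term and $\|(1+P)^{m_{k-1}}\|_{L^1}^2$. Because the constants grow only geometrically in $k$, the recursion $M_k\le (Ca^{k})M_{k-1}^{2}$ for $M_k=\sup_t\|1+P_\epsilon\|_{L^{m_k}}^{m_k}$ yields $\sup_k M_k^{1/m_k}<\infty$, which is \eqref{eq:11l}. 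The $C^\omega$ initial data ensure that $M_k(0)\le |\Omega|\,\|1+P_0\|_\infty^{m_k}$ is compatible with the recursion.

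\emph{Step 3 (gradient bound).} Take $m=2$ in the moment inequality, or equivalently test the $P_\epsilon$ equation with $P_\epsilon$. Use Lemma \ref{lem:pc1aa} to get $(|\nabla P|^2+\epsilon)^{\frac{p-2}{2}}|\nabla P|^2\ge|\nabla P|^p-\epsilon^{p/2}$, and bound the chemotactic term by $\frac{D}{2}\int|\nabla P_\epsilon|^p+C\|P_\epsilon\|_\infty^{p'}\|\nabla W_\epsilon\|_\infty^{p'}$. Integrating in time over $(0,t)$ and using the logistic dissipation to control the linear growth gives \eqref{eq:14l}, with a constant bounded by $C(1+t)$. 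The time-uniform version is obtained on windows $(t-1,t)$.

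The main obstacle is Step 2. The degenerate fast-diffusion operator with $p<2$ only supplies $\int(1+P)^{m-2}|\nabla P|^p$, and $m$-dependence in the Young and Gagliardo--Nirenberg constants could break the iteration. I would first try to close the iteration with the logistic absorption alone and sufficiently large $m_0$ (using $p>3/2$ as in Lemma \ref{lem:pe21}). If the constants blow up too fast, the fallback is a Stampacchia/De Giorgi truncation applied to $(P_\epsilon-k)_+$, which needs only a single fixed $L^m$ bound together with $\nabla W_\epsilon\in L^\infty$.
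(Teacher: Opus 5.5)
Your proposal is correct and follows the paper's route. The uniform $L^m$ bounds of Lemma \ref{lem:lpb} are upgraded, via semigroup estimates and a Moser--Alikakos iteration, to \eqref{eq:11l}--\eqref{eq:12l}, and the $m=2$ energy inequality integrated in time gives \eqref{eq:14l}. You also supply details the paper only cites: Lemma \ref{lem:2.1prime} for $W_\epsilon$, the logistic absorption that explains $p>3/2$ via $\frac{p}{p-1}<3$, and the observation that \eqref{eq:14l} carries a $t$-dependent constant, which is also all the paper's integration over $[0,T]$ actually yields.
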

\begin{proof}
    \eqref{eq:11l}-\eqref{eq:12l} follow from the estimates in Lemma \ref{lem:lpb}, followed via standard Moser-Alikakos scheme \cite{zhuang2021global}. Furthermore, we have the following inequality, after applying the estimates in Lemma \ref{lem:lpb}, with $m=2$ yields,

    \begin{eqnarray}
   && \frac{d}{dt}\left(\int_{\Omega} (P_{\epsilon} +1)^{2}dx\right)+\frac{D}{2} \int_{\Omega} |\nabla P_{\epsilon}|^{p} dx  \nonumber \\ 
&\leq&  \left(\frac{2 \delta\theta_{1}}{\beta \gamma D}\right)^{\frac{1}{p-1}}\int_{\Omega} |\nabla W_{\epsilon}|^{\frac{p}{p-1}\theta_{1}} dx + C_{6}\int_{\Omega} (1+P_{\epsilon})^{\left(\frac{p}{p-1}\right)\left(\frac{\theta_{1}}{\theta_{1}-1}\right)} + 2(r)\int_{\Omega} (1+P_{\epsilon})^{2}dx \nonumber \\
&& -\frac{2r}{K}\int_{\Omega} (1+P_{\epsilon})(P_{\epsilon})^{2} dx.  \nonumber \\
        \end{eqnarray}
    Now following the methods of estimate \eqref{eq:mm1}, we have,

    \begin{eqnarray}
&&\frac{d}{dt}\left(\int_{\Omega} (P_{\epsilon} +1)^{2}dx\right)+\frac{D}{2} \int_{\Omega} |\nabla P_{\epsilon}|^{p} dx \nonumber \\
&&\leq \left(\frac{2 \delta\theta_{1}}{\beta \gamma D}\right)^{\frac{1}{p-1}}\int_{\Omega} |\nabla W_{\epsilon}|^{\frac{p}{p-1}\theta_{1}} dx +C_{6}\int_{\Omega} (1+P_{\epsilon})^{\left(\frac{p}{p-1}\right)\left(\frac{\theta_{1}}{\theta_{1}-1}\right)} + C_{1}. \nonumber \\
        \end{eqnarray}
        Now we use lemma \ref{lem:lpb} and eatimates via \ref{eq:wwp}, and  integration in time $[0,T]$ of the inequality above, gives the integral estimate \eqref{eq:14l}.
    \end{proof}

Next, we state a result about the time derivative of the solution. We use the embedding,

        \begin{equation}
            W^{1,p}(\Omega) \hookrightarrow \hookrightarrow L^{q^{*}}(\Omega) \hookrightarrow (W^{1,p}(\Omega))^{*},
        \end{equation}

        and apply the $(W^{1,p}(\Omega))^{*}=W^{-1,q}(\Omega)$, where $\frac{1}{p} + \frac{1}{q} = 1$. 

\begin{theorem}
\label{thm:w1si}
    Consider \eqref{eq:ext3dr}. Let $2>p>\frac{3}{2}$, 
    and all other parameters be positive. If the initial data $P_{0}(x) \in W^{1,\infty}(\Omega), W_{0}(x) \in L^{\infty}(\Omega)$, is sufficiently small $||P_{0}||_{\infty} < C, ||W_{0}||_{\infty}<C$, then for any $1>> \epsilon > 0$, there exists a constant $C(T)$, depending only on the problem parameters, such that,

    \begin{equation} 
\left \lVert \frac{\partial P^{\epsilon}}{\partial t}\right \rVert_{L^{1}((0,T); (W^{1,p}(\Omega))^{*})} \leq C.
\end{equation}

\end{theorem}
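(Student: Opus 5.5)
The plan is a duality argument on the $P_\epsilon$-equation in \eqref{eq:ext3dr}, using the uniform-in-$\epsilon$ bounds of Lemma \ref{lem:ge11n}. We fix a test function $\varphi\in W^{1,p}(\Omega)$ with $\|\varphi\|_{W^{1,p}(\Omega)}\le 1$. We multiply the $P_\epsilon$-equation by $\varphi$, integrate over $\Omega$, and use the zero-flux boundary condition to move the divergence onto $\varphi$. This gives
\begin{align*}
\Big|\int_\Omega (P_\epsilon)_t\varphi\,dx\Big| \le{}& D\int_\Omega (|\nabla P_\epsilon|^2+\epsilon)^{\frac{p-2}{2}}|\nabla P_\epsilon|\,|\nabla\varphi|\,dx + D\int_\Omega \frac{\delta P_\epsilon}{\beta\gamma}|\nabla W_\epsilon|\,|\nabla\varphi|\,dx\\
&+ \int_\Omega rP_\epsilon\Big(1+\frac{P_\epsilon}{K}\Big)|\varphi|\,dx .
\end{align*}
Taking the supremum over $\varphi$ bounds $\|(P_\epsilon)_t(\cdot,t)\|_{(W^{1,p})^*}$ by three terms, each of which we then integrate over $(0,T)$.

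The chemotactic and logistic terms are straightforward. By \eqref{eq:11l}--\eqref{eq:12l}, $P_\epsilon$ and $\nabla W_\epsilon$ are bounded in $L^\infty$ uniformly in $\epsilon$. Hölder's inequality then bounds both terms by $C\|\nabla\varphi\|_{L^p}|\Omega|^{(p-1)/p}$ and $C\|\varphi\|_{L^1}\le C|\Omega|^{(p-1)/p}\|\varphi\|_{L^p}$ respectively. Their time integrals over $(0,T)$ are therefore at most $CT$.

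The diffusion term is where the restriction $1<p<2$ is used, and we expect it to be the main (though mild) obstacle, since it must be bounded uniformly in $\epsilon$ without a lower bound on $|\nabla P_\epsilon|$. Because $p-2<0$, we have $(|\nabla P_\epsilon|^2+\epsilon)^{(p-2)/2}\le |\nabla P_\epsilon|^{p-2}$, so the integrand is at most $|\nabla P_\epsilon|^{p-1}|\nabla\varphi|$, with no $\epsilon$-dependence. Hölder's inequality with exponents $p/(p-1)$ and $p$ gives
\[
\int_\Omega |\nabla P_\epsilon|^{p-1}|\nabla\varphi|\,dx\le \Big(\int_\Omega|\nabla P_\epsilon|^p\,dx\Big)^{\frac{p-1}{p}}\|\nabla\varphi\|_{L^p}.
\]
Integrating in time and applying Hölder again yields
\[
\int_0^T\|\nabla P_\epsilon\|_{L^p}^{p-1}\,dt\le T^{1/p}\Big(\int_0^T\!\!\int_\Omega|\nabla P_\epsilon|^p\,dx\,dt\Big)^{\frac{p-1}{p}}\le C(T),
\]
by \eqref{eq:14l}. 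The regularised exponent in \eqref{eq:ext3dr} is written as $\frac{p-2}{p}$; if it is taken literally instead of as $\frac{p-2}{2}$, the same computation applies with $|\nabla P_\epsilon|^{2(p-1)/p}\le 1+|\nabla P_\epsilon|^{p-1}$.

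Summing the three contributions gives $\|(P_\epsilon)_t\|_{L^1((0,T);(W^{1,p}(\Omega))^*)}\le C(T)$ uniformly in $\epsilon$. In fact the argument gives the stronger bound in $L^{p/(p-1)}((0,T);(W^{1,p})^*)$, which is the form needed later to apply Lemma \ref{lem:al}.
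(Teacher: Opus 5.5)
Your argument is correct and is essentially the paper's proof: test the $P_\epsilon$-equation against a unit-norm function, integrate by parts using the zero-flux condition, and bound each term with the uniform estimates of Lemma \ref{lem:ge11n}. Your pointwise bound $(|\nabla P_\epsilon|^2+\epsilon)^{\frac{p-2}{2}}\le|\nabla P_\epsilon|^{p-2}$, followed by H\"older in $x$ and $t$ against \eqref{eq:14l}, spells out the diffusion-term estimate that the paper only asserts. There is one slip, in your parenthetical remark: with the literal exponent $\frac{p-2}{p}$ the flux is bounded by $|\nabla P_\epsilon|^{\frac{3p-4}{p}}$, not $|\nabla P_\epsilon|^{\frac{2(p-1)}{p}}$, and the inequality $|\nabla P_\epsilon|^{\frac{2(p-1)}{p}}\le 1+|\nabla P_\epsilon|^{p-1}$ is false for $p<2$; the remark survives once corrected, because $0<\frac{3p-4}{p}\le p-1$ for $p>\frac{3}{2}$ (the upper bound is equivalent to $(p-2)^2\ge 0$).
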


\begin{proof}
We consider a test function $\zeta \in C^{\infty}_{0}(\Omega)$ such that $||\zeta||_{W^{1,p}(\Omega)} \leq 1$. Note that,

 \begin{equation} 
\left \lVert \frac{\partial P^{\epsilon}}{\partial t}\right \rVert_{L^{1}((0,T); (W^{1,p}(\Omega))^{*})} = \int^{T}_{0}\left( \sup_{\zeta \in C^{\infty}_{0}(\Omega),||\zeta||_{W^{1,p}(\Omega)} \leq 1 } \int_{\Omega} \frac{\partial P^{\epsilon}}{\partial t} \zeta dx\right) dt.
\end{equation}
We now consider,

\begin{eqnarray}
    && \int_{\Omega} \frac{\partial P^{\epsilon}}{\partial t} \zeta dx \nonumber \\
    && = \int_{\Omega} \Bigl[\nabla \cdot \Big(D_{1}  \left(|\nabla P^{\epsilon}|^{2}+{\epsilon}\right)^{\frac{p-2}{2}}  \nabla P^{\epsilon} - P_{\epsilon}\,\frac{\delta}{(W_{\epsilon}+\beta)(W_{\epsilon}+\gamma)}(\nabla W_{\epsilon})\Bigr]\zeta dx \nonumber \\
     && + r\int_{\Omega}P_{\epsilon}\ \left(1-\frac{P_{\epsilon}}{K}\right) \zeta dx \nonumber \\
    && \leq \int_{\Omega} |\nabla \zeta| |P^{\epsilon}||\nabla W_{\epsilon}| dx + r\int_{\Omega} | \zeta| |P^{\epsilon}| dx + \int_{\Omega} \frac{r}{K}| \zeta| |P^{\epsilon}|^{2} dx  \nonumber \\
    && + \int_{\Omega}\Big(D_{1}  \left(|\nabla P^{\epsilon}|^{2}+{\epsilon}\right)^{\frac{p-2}{2}}  \nabla P^{\epsilon} \Big) \nabla \zeta dx \nonumber \\
    &&  \leq \int_{\Omega} |\nabla \zeta| |P^{\epsilon}| dx + r\int_{\Omega} | \zeta| |P^{\epsilon}| dx + \frac{r}{K}\int_{\Omega} | \zeta| |P^{\epsilon}|^{2} dx  \nonumber \\
    &&+ \int_{\Omega}\Big(D_{1]}  \left(|\nabla P^{\epsilon}|^{2}+{\epsilon}\right)^{\frac{p-1}{2}}  \Big) \nabla \zeta dx \nonumber \\
    && \leq C_{1} + C_{2} + C_{3} + C_{4} + C_{5}.\nonumber \\
\end{eqnarray}
This follows via the uniform estimates in Lemma \ref{lem:ge11n}, as well as the assumptions on the test function $\zeta$.
\end{proof}

We state the following theorem,
\begin{theorem}
\label{thm:vubd}
    Consider \eqref{eq:ext3dr}. Let $p>\frac{3}{2}$, and all other parameters positive. Consider initial data $P_{0}(x) \in W^{1,\infty}(\Omega), W_{0}(x) \in L^{\infty}(\Omega)$, sufficiently small. Then there exists a function $P \in L^{p}_{loc}((0,\infty), W^{1,p}(\Omega)) \cap L^{\infty}((0,\infty);L^{\infty}(\Omega))$ and 
    
    $W \in  L^{2}_{loc}((0,\infty), W^{1,2}(\Omega)) \cap L^{\infty}((0,\infty);L^{\infty}(\Omega))$, and a $\Gamma \in L^{\frac{p}{p-1}}_{loc}((0,\infty);L^{\frac{p}{p-1}}$ $(\Omega)) \cap L^{\infty}((0,\infty);L^{\infty}(\Omega))$, and a sequence of approximants $\epsilon = \epsilon_{j} \searrow 0$, such that,

    \begin{equation}
P^{\epsilon} \rightarrow P^{*} \ \mbox{in} \ L^{p}_{Loc}((0,\infty);L^{p}(\Omega)),
\end{equation}

 \begin{equation}
P^{\epsilon} \overset{*}{\rightharpoonup} P^{*}  \ \mbox{in} \ L^{\infty}((0,\infty);L^{\infty}(\Omega)),
\end{equation}

 \begin{equation}
 \label{eq:w1pv}
\nabla P^{\epsilon} \rightharpoonup \nabla P^{*}  \ \mbox{in} \ L^{p}_{loc}((0,\infty);L^{p}(\Omega)),
\end{equation}

 \begin{equation}
|\nabla P^{\epsilon}|^{p-2}\nabla P^{\epsilon} \rightharpoonup  \Gamma \ \mbox{in} \ L^{\frac{p}{p-1}}_{loc}((0,\infty);L^{\frac{p}{p-1}}(\Omega)),
\end{equation}

\begin{equation}
\nabla W^{\epsilon} \overset{*}{\rightharpoonup} \nabla W^{*}  \ \mbox{in} \ L^{\infty}((0,\infty);L^{\infty}(\Omega)),
\end{equation}
and
\begin{equation}
 W^{\epsilon} \rightharpoonup  W^{*}  \ \mbox{in} \ L^{2}_{loc}((0,\infty);W^{1,2}(\Omega)).
\end{equation}

\end{theorem}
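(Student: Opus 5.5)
The plan is a standard compactness argument built on the $\epsilon$-uniform bounds already established. First, from Lemma \ref{lem:ge11n} I take the bounds $\|P^{\epsilon}\|_{L^\infty}\le C$, $\|W^{\epsilon}\|_{W^{1,\infty}}\le C$ and $\int_0^T\!\int_\Omega|\nabla P^\epsilon|^p\le C$. Together they give boundedness of $P^\epsilon$ in $L^p((0,T);W^{1,p}(\Omega))\cap L^\infty((0,\infty);L^\infty(\Omega))$ for every $T$. From Theorem \ref{thm:w1si} I take boundedness of $\partial_t P^\epsilon$ in $L^1((0,T);(W^{1,p}(\Omega))^*)$.

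For $W^\epsilon$, I would test the second equation of \eqref{eq:ext3dr} with $W^\epsilon$. Using the $L^\infty$ bounds on $P^\epsilon,W^\epsilon$, this gives a bound for $W^\epsilon$ in $L^2((0,T);W^{1,2}(\Omega))$. I would also bound $\partial_tW^\epsilon$ in $L^2((0,T);(W^{1,2})^*)$, since $\nabla W^\epsilon$ is bounded and the reaction term is bounded.

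Next I extract subsequences. Banach--Alaoglu, applied on each $(0,T)$ and then diagonalized over $T=1,2,\dots$, gives
\begin{itemize}
\item $P^\epsilon\overset{*}{\rightharpoonup}P^*$ in $L^\infty L^\infty$,
\item $\nabla P^\epsilon\rightharpoonup\chi$ in $L^p_{loc}L^p$,
\item $\nabla W^\epsilon\overset{*}{\rightharpoonup}\nabla W^*$ in $L^\infty L^\infty$,
\item $W^\epsilon\rightharpoonup W^*$ in $L^2_{loc}W^{1,2}$.
\end{itemize}
For the flux term I note that $|(|\nabla P^\epsilon|^2+\epsilon)^{\frac{p-2}{2}}\nabla P^\epsilon|\le |\nabla P^\epsilon|^{p-1}$ when $p<2$. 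Hence this flux, and also $|\nabla P^\epsilon|^{p-2}\nabla P^\epsilon$, is bounded in $L^{p/(p-1)}_{loc}L^{p/(p-1)}$, since $(p-1)\cdot\frac{p}{p-1}=p$. This yields a weak limit $\Gamma$. Since $\Gamma$ is merely a weak limit, I would identify it only in this abstract sense, as the statement asks; identifying $\Gamma=|\nabla P^*|^{p-2}\nabla P^*$ is deferred to Minty's trick later.

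For strong convergence I apply Lemma \ref{lem:al}. The triple is $X_0=W^{1,p}(\Omega)\hookrightarrow\hookrightarrow X=L^p(\Omega)\hookrightarrow X_1=(W^{1,p}(\Omega))^*$, with the $L^p$ bound in time on $P^\epsilon$ and the $L^1$ bound on $\partial_tP^\epsilon$. The compact embedding holds in $n\le2$ by Rellich, and $L^p\hookrightarrow(W^{1,p})^*$ holds since $W^{1,p}\hookrightarrow L^{p'}$ for $p>3/2$, $n=2$. This yields $P^\epsilon\to P^*$ in $L^p((0,T);L^p(\Omega))$ for each $T$, and diagonalization gives the local statement.

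The main obstacle is the time-derivative exponent. Lemma \ref{lem:al} as stated asks for reflexive spaces and, for part (i), allows $q'=1$ only in the Simon version. So I would invoke the Aubin--Lions--Simon form, which permits $\partial_tP^\epsilon\in L^1(0,T;X_1)$. If that were unavailable, I would upgrade the bound in Theorem \ref{thm:w1si} to $L^{p/(p-1)}((0,T);(W^{1,p})^*)$. This follows from the same duality computation, using $\int_0^T\|\nabla P^\epsilon\|_p^p\le C$ and the $L^\infty$ bounds.

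Finally I identify the limits. Strong $L^p$ convergence gives distributional convergence of gradients, so $\chi=\nabla P^*$, which proves \eqref{eq:w1pv}. Uniqueness of weak limits in $\mathcal D'$ makes the $W$ limits consistent, and it also gives $P^*\in L^p_{loc}W^{1,p}\cap L^\infty L^\infty$ by weak lower semicontinuity. The $L^\infty$ bound on $\Gamma$ is the delicate claim: $|\nabla P^\epsilon|^{p-1}$ is not uniformly bounded, so I would either weaken this to the $L^{p/(p-1)}$ statement or justify it through a gradient bound, which I regard as unlikely to be available.
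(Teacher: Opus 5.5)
Your proposal follows the paper's argument, just made explicit. The paper's proof only cites the uniform bounds of Lemma~\ref{lem:ge11n}, the time-derivative bound of Theorem~\ref{thm:w1si}, and Aubin--Lions (Lemma~\ref{lem:al}). Your diagonal Banach--Alaoglu extraction, the triple $W^{1,p}(\Omega)\hookrightarrow\hookrightarrow L^p(\Omega)\hookrightarrow (W^{1,p}(\Omega))^*$, and the identification of the weak gradient limit via distributional convergence are exactly what that citation compresses. Your $L^{p/(p-1)}$-in-time upgrade of the $\partial_tP^\epsilon$ bound is also valid, since the duality estimate gives $\|\partial_tP^\epsilon\|_{(W^{1,p})^*}\le C(1+\|\nabla P^\epsilon\|_{L^p}^{p-1})$. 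It lets you use the classical Aubin--Lions lemma rather than Simon's $L^1$ version.

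The one clause you leave open, $\Gamma\in L^\infty((0,\infty);L^\infty(\Omega))$, is not justified by the paper either. Nothing in Lemma~\ref{lem:ge11n} controls $\nabla P^\epsilon$ in $L^\infty$, so the cited bounds give only $\Gamma\in L^{p/(p-1)}_{loc}$. Your choice to weaken that clause rather than assert it is the accurate reading.
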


\begin{proof}
    The proof follows via the uniform bounds in Lemma \ref{lem:ge11n}, Theorem \ref{thm:w1si} and the Aubin-Lions compactness theory via Lemma \ref{lem:al}.
\end{proof}
Next standard methods \cite{zhuang2021global, parshad2026effect} show, $\Gamma = |\nabla P|^{p-2}\nabla P$, whereby the proof of Theorem \ref{thm:w1s} is complete.   
\subsection{Finite time extinction of the cell}

\begin{theorem}
\label{thm:FFTEdd}
Consider the model \eqref{eq:main_model}. Then there exists certain positive initial data $(P_0(x),W_0(x))$, and some $p \in(\frac{3}{2},2]$, such that  $ P \to 0$ in $L^{2}(\Omega)$, in finite time.
\end{theorem}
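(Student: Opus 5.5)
The plan is the standard energy route to finite time extinction for the fast-diffusion $p$-Laplacian: derive a differential inequality $y'(t)+c\,y(t)^{\kappa}\le 0$ with $\kappa<1$ for $y(t)=\|P(\cdot,t)\|_{L^2(\Omega)}^2$, which forces $y\equiv 0$ after the time $T^{*}=y(0)^{1-\kappa}/\bigl(c(1-\kappa)\bigr)$.

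\textbf{Step 1 (energy identity).} I will work on the regularised system \eqref{eq:ext3dr} with $1<p<2$ and pass $\epsilon\to 0$ at the end, using Theorem~\ref{thm:vubd}. Testing the $P_\epsilon$ equation with $P_\epsilon$ and using Lemma~\ref{lem:pc1aa} gives
\[
\frac12\frac{d}{dt}\|P_\epsilon\|_2^2+D\!\int_\Omega|\nabla P_\epsilon|^p\,dx\le \frac{D\delta}{\beta\gamma}\int_\Omega P_\epsilon|\nabla P_\epsilon||\nabla W_\epsilon|\,dx+r\|P_\epsilon\|_2^2-\frac rK\int_\Omega P_\epsilon^3\,dx+C\epsilon^{p/2}.
\]

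\textbf{Step 2 (chemotaxis term).} I will use the uniform bound $\|\nabla W_\epsilon\|_\infty\le C$ of Lemma~\ref{lem:ge11n}, which is available because the data are small. Young's inequality then absorbs the chemotactic term into $\frac D2\|\nabla P_\epsilon\|_p^p$ plus $C\int_\Omega P_\epsilon^{p/(p-1)}\,dx$. Since $p/(p-1)>2$ and $\|P_\epsilon\|_\infty$ is small, this remainder is at most $C\|P_\epsilon\|_\infty^{(2-p)/(p-1)}\|P_\epsilon\|_2^2$, i.e. a small multiple of $y$.

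\textbf{Step 3 (coercivity).} I want $\|\nabla P\|_p^p\ge c\|P\|_2^{p}$. With $n\le 2$ and $p>3/2$, the embedding $W^{1,p}\hookrightarrow L^2$ behind Lemma~\ref{lem:pe21} gives this only up to the mean, namely $\|P-\bar P\|_2\le C\|\nabla P\|_p$. This yields $y'+c\,(y-|\Omega|\bar P^2)^{p/2}\le C\,y$.

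\textbf{Main obstacle (mean and linear growth).} Under Neumann conditions the mean is not controlled by the gradient. Moreover, the logistic term contributes $+ry$, which is linear in $y$ and therefore dominates $y^{p/2}$ only when $y$ is large, not as $y\to 0$. Indeed, integrating the $P$ equation gives
\[
\frac{d}{dt}\int_\Omega P\,dx\ \ge\ r\Bigl(1-\frac{\|P\|_\infty}{K}\Bigr)\int_\Omega P\,dx .
\]
So for small bounded $P$ with $\|P\|_\infty<K$ the mass is nondecreasing, and extinction from data with $\int_\Omega P_0\,dx>0$ is impossible in this setting.

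\textbf{Realization I would attempt.} The only consistent way to realize the statement as worded is the boundary case of admissible data, namely $P_0\ge 0$ with $\int_\Omega P_0\,dx=0$, that is $P_0\equiv 0$, paired with any positive $W_0$. For such data the comparison $y'+c\,y^{p/2}\le Cy$ together with $y(0)=0$ gives $y\equiv 0$, so the extinction is trivial. I would first try to find a genuinely nontrivial mechanism, for instance transient depletion regions driven by the chemotactic term, but the mass inequality above shows that no such mechanism can yield global $L^2$ extinction while $r>0$ and $\|P\|_\infty<K$. I therefore expect this step to fail for nontrivial data, and I would record the statement as valid only for $P_0\equiv 0$ unless its hypotheses are altered.
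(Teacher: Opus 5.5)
Your diagnosis is correct. The obstruction you found defeats the statement itself, not only your attempt.

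\textbf{Where the paper's proof fails.} Your Steps 1--3 follow essentially the paper's own route. The paper tests the $P$ equation with $P$ (in $n=1$), absorbs the chemotactic term by Young's inequality, and interpolates $L^{2+\frac{2-p}{p-1}}$ between $L^2$ and $L^3$. It then reduces to $Y_t\le C_3+MY-\widetilde C\,Y^{\alpha}$ with $\alpha=p/2$, and asserts extinction by citing an ODE comparison result. That conclusion does not follow, for the reasons you anticipated:
\begin{itemize}
\item The inequality $\|P\|_2\le C\|P_x\|_p^{1/2}\|P\|_3^{1/2}$ used there is false under zero-flux conditions; take $P$ a nonzero constant. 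Lemma~\ref{lem:gns} only gives it with the full $W^{1,p}$ norm in place of $\|P_x\|_p$. That reintroduces exactly the mean you flagged in Step~3.
\item Even granting that inequality, Young's inequality produces an additive constant $C_3>0$. It makes the right-hand side positive near $Y=0$, so no comparison argument can force $Y$ to reach zero.
\item At $p=2$ one has $\alpha=1$, so the closed endpoint of the claimed range is not covered even formally.
\end{itemize}

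\textbf{Your mass argument is decisive, and needs less than you assumed.} You do not need $\|P\|_\infty<K$. Testing the weak formulation with a spatially constant $\varphi=\psi(t)$ gives $\frac{d}{dt}\int_\Omega P\,dx=r\int_\Omega P(1-P/K)\,dx$. Hence any nonnegative solution with $P\le M$ on $\Omega\times(0,T)$ satisfies
\[
\int_\Omega P(\cdot,t)\,dx\ \ge\ e^{-r(M/K-1)^{+}t}\int_\Omega P_0\,dx\ >\ 0,\qquad 0<t<T .
\]
Since $\|P\|_{L^1}\le|\Omega|^{1/2}\|P\|_{L^2}$, no such solution with $P_0\not\equiv0$ can vanish in $L^2(\Omega)$ in finite time. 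This covers the bounded solutions supplied by Theorem~\ref{thm:w1s}. No choice of $p\in(\frac32,2]$ or of the other positive parameters avoids it.

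\textbf{What to change in your write-up.} Your final ``realization'' does not prove the statement, because $P_0\equiv0$ is not positive initial data. Present your argument as a disproof of the theorem as worded, not as a proof of a degenerate case. A genuine extinction result would need a different setting. One example is homogeneous Dirichlet data for $P$ with $p<2$: there Poincar\'e's inequality applies to $P$ itself, and $c\,y^{p/2}$ dominates $Cy$ for small $y$.
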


\begin{proof}

We consider $n=1$, and test the $P$ equation in $(\ref{eq:main_model})$, against $P$ itself. Next, integrating over the full domain $\Omega$ and using the boundary conditions we obtain,
\[ \dfrac{1}{2} \dfrac{d}{dt} ||P||_2^2 +  \frac{r}{K}||P||_3^3 +   D_{1}||P_x||_p^p \le r||P||_2^2 + \frac{\delta}{\beta \gamma}\int_{\Omega} P_x W_{x} P dx .\]
Using the positivity of $P,W$ and earlier estimates, we have,
\[ \dfrac{1}{2} \dfrac{d}{dt} ||P||_2^2 +  ||P||_3^3 + C_{1}||P_x||_p^p \le  ||P||_2^2 + ||P||_{2 + \frac{2-p}{p-1}}^{2 + \frac{2-p}{p-1}} + ||W_{x}||^{q}_{q},\]

We first use the weighted $L^{p}$ inequality via lemma \ref{lem:wlp}

\[  L^{3}(\Omega) \hookrightarrow L^{2 + \frac{2-p}{p-1}}(\Omega) \hookrightarrow L^{2} (\Omega),\]
followed by Young's inequality
to obtain,
\begin{equation}
    ||P||_{2 + \frac{2-p}{p-1}} \leq C_{1}||P||^{2}_{3} + C_{2}||P||^{2}_{2} \leq C_{3} + C_{4}||P||^{3}_{3} + C_{2}||P||^{2}_{2},
\end{equation}
such that,

\begin{equation}
    \dfrac{1}{2} \dfrac{d}{dt} ||P||_2^2 +  C_{5}||P||_3^3 + C ||Px||_p^p \le C_{4}||P||_2^2 + C_{3} .
\end{equation}

Now for $\theta=\frac{1}{2}$, we apply Lemma \ref{lem:gns} to obtain,

\begin{equation}\label{GNS_pde}
    ||P||_{2} \le C ||P_x||^{\theta}_{p} ||P||^{1-\theta}_{3}.
\end{equation}
We raise the both sides of $(\ref{GNS_pde})$ to the power of $l$, where $l \in (0,2)$
\[ \Big( \int_\Omega P^2 \Big)^{\frac{l}{2}} \le C \Big( \int_\Omega |P_x|^{p}dx \Big)^{\frac{l \theta}{p}} \Big( \int_\Omega |P|^3dx \Big)^{\frac{l(1-\theta)}{3}}. \]
Note via Young's inequality \cite{evans},
 $ab \le \dfrac{a^r}{r} + \dfrac{b^s}{s}$
such that $\frac{1}{r} + \frac{1}{s}=1.$ 
Use the Young's inequality for $r = \frac{p}{l \theta}$ and $s=\frac{p}{l(1-\theta)}$, where $\theta = \frac{1}{2}$.  Moreover, $p \in (1,2]$, so we can find a $l \in (0,2)$ such that,
\begin{equation}\label{ODE}
    Y_t \le C_{3} + MY-\widetilde{C} Y^{\alpha},
\end{equation}
where $Y=||P||_2$. As $p\in (1,2]$, we can fix $\alpha =\frac{p}{2}\in (0,1).$ Now via \cite{parshad2021some}, we can prove that $\exists T^*<\infty$ such that $Y \to 0$ as $T \to T^*$.
\end{proof}

\subsection{Special case p=2}

In this setting our system reduces to,

\begin{equation}
\begin{cases}
P_t = D\,\nabla\cdot\Big[\nabla P - P\,\dfrac{\delta}{(W+\beta)(W+\gamma)}\,\nabla W\Big] + rP\Big(1-\dfrac{P}{K}\Big), & x\in\Omega,\ t>0,\\[6pt]
W_t = D_1\Delta W + \Big(\dfrac{P}{1+\nu W}-\mu\Big)W, & x\in\Omega,\ t>0,
\end{cases}
\label{eq:main_model1n}
\end{equation}
subject to the Neumann-type boundary conditions
\begin{equation}
\Big[\nabla P - P\, \chi(W)\,\nabla W\Big]\cdot\nu = 0,
\qquad
\partial_\nu W = 0,
\qquad x\in\partial\Omega,\ t>0,
\label{eq:main_model_bcdn1}
\end{equation}
where $\chi(W) = \frac{\delta}{(W+\beta)(W+\gamma)}$.
Herein via standard results, \cite{mizukami2016global}, a classical soulution is expected. We apply the methods in \cite{mizukami2016global}, to derive an inequality of the form,

\begin{equation}
\frac{d}{dt}\int_{\Omega} P^{p}\left[ f(W)\right]^{-l} \leq a \int_{\Omega} P^{p}\left[ f(W)\right]^{-l} - b \left( \int_{\Omega} P^{p}\left[ f(W)\right]^{-l}\right)^{\frac{p+1}{p}},
    \end{equation}
where $f(W) = e^{\left(\int^{W}_{0}\chi(s)ds\right)}$.

\begin{lemma}
\label{lem:pf1}
Consider \eqref{eq:main_model}, then there exists a $r=r(D_{1},p)>0$ such that,
\begin{eqnarray}
&&\frac{d}{dt}\int_{\Omega} P^{p}\left[ f(W)\right]^{-l} \nonumber \\ 
&\leq& p r \int_{\Omega} P^{p}\left[ f(W)\right]^{-l}(1-P)dx - r \int_{\Omega}P^{p}\left[ f(W)\right]^{-l}(\chi(W)\Big(\dfrac{P}{1+\nu W}- \mu\Big)Wdx. \nonumber \\
\end{eqnarray}
\end{lemma}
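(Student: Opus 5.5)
We differentiate the weighted functional $\int_\Omega P^p f(W)^{-l}\,dx$ directly, in the spirit of \cite{mizukami2016global}, with $f(W)=\exp\big(\int_0^W\chi(s)\,ds\big)$, so that $f'(W)=\chi(W)f(W)$. The basic identity is
\begin{equation*}
\frac{d}{dt}\int_\Omega P^p f(W)^{-l}\,dx = p\int_\Omega P^{p-1}f(W)^{-l}P_t\,dx - l\int_\Omega P^p f(W)^{-l}\chi(W)W_t\,dx.
\end{equation*}
Into this we substitute the two equations of \eqref{eq:main_model1n}. The terms then fall into two groups. The first group consists of the reaction terms. The logistic part of $P_t$ produces $p r\int_\Omega P^p f^{-l}(1-P/K)\,dx$, which after normalizing $K$ is the first term on the right of the lemma. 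The kinetic part of $W_t$ produces $-l\int_\Omega P^p f^{-l}\chi(W)\big(\frac{P}{1+\nu W}-\mu\big)W\,dx$, which is the second term, with the constant in front identified with $l$ (the lemma's $r$ plays the role of this exponent). The second group consists of the transport terms, namely the cross-diffusion in $P$ and the $D_1\Delta W$ term. The whole task is to show that this second group is nonpositive for a suitable exponent $l=l(D_1,p)$.

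For the transport group we integrate by parts using the no-flux condition \eqref{eq:main_model_bcdn1}, and we write $\nabla(P^{p-1}f^{-l}) = (p-1)P^{p-2}f^{-l}\nabla P - l P^{p-1}f^{-l}\chi\nabla W$ and $\nabla(P^pf^{-l}\chi) = pP^{p-1}f^{-l}\chi\nabla P + P^pf^{-l}(\chi'-l\chi^2)\nabla W$. This gives a quadratic form in $(\nabla P,\nabla W)$ with weight $P^{p-2}f^{-l}$:
\begin{equation*}
-\int_\Omega P^{p-2}f^{-l}\Big[Dp(p-1)|\nabla P|^2 - c_1P\chi\,\nabla P\cdot\nabla W + c_2P^2\big(l\chi^2-\chi'\big)|\nabla W|^2\Big]dx ,
\end{equation*}
where $c_1$ and $c_2$ are explicit in $D,D_1,p,l$. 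Since $\chi' = -\chi\big(\frac{1}{W+\beta}+\frac{1}{W+\gamma}\big)<0$, the term $-\chi'$ contributes with a favorable sign and we may drop it. Nonpositivity then reduces to checking a discriminant condition of the form $c_1^2\le 4Dp(p-1)\,c_2\,l$ pointwise. Here $c_1$ is linear in $l$ (roughly $(Dp(p-1)+Dpl+D_1lp)$) and $c_2\,l$ is roughly $D_1 l^2$ with an $l$ term. The condition is therefore a quadratic inequality in $l$, and we choose $l$ in the interval it allows. This is exactly the Mizukami--Yokota choice, and it depends only on $D$, $D_1$ and $p$.

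The main obstacle is this discriminant step. We have to verify that the admissible $l$-interval is nonempty for the given $p$. This may force a restriction such as $D_1$ not too different from $D$, or $p$ large enough. If it fails, we will first try to absorb the remainder using $\chi\le \delta/(\beta\gamma)$ together with a Young inequality, keeping only part of $D|\nabla P|^2$. The reaction terms need no estimation; they are kept exactly as stated. The remaining differentiation and bookkeeping is routine.
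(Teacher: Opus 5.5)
There is a genuine gap. Your overall route is the paper's: differentiate $\int_\Omega P^p f(W)^{-l}\,dx$, keep the kinetic terms, integrate the transport terms by parts, and pick $l$ so that the resulting quadratic form is nonpositive. The step that fails is the one you treat as routine. You discard $-lD_1\chi'(W)P^2|\nabla W|^2$ as a merely favorable term, and without it no $l$ works. With your own coefficients, $a=Dp(p-1)$, $b=p\chi\big[D(p-1)+l(D+D_1)\big]$, and (with $\chi'$ dropped) $c=l\chi^2(pD+lD_1)$, the requirement $b^2\le 4ac$ becomes $F(l)\le 0$, where
\[
F(l)=\big[p(D+D_1)^2-4(p-1)DD_1\big]l^2+2pD(p-1)(D_1-D)\,l+pD^2(p-1)^2 .
\]
The leading coefficient is at least $4DD_1>0$, the constant term is positive, and the discriminant equals $-16\,p(p-1)^2D^3D_1<0$. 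Hence $F(l)>0$ for every $l$, every $D,D_1>0$ and every $p>1$. The admissible $l$-interval is always empty, not only when $D_1/D$ or $p$ is unfavorable. Put differently, after Young's inequality there always remains a term $+\frac{F(l)}{4D(p-1)}\int_\Omega P^p f(W)^{-l}\chi(W)^2|\nabla W|^2\,dx$. Your fallback (Young plus $\chi\le\delta/(\beta\gamma)$) produces exactly this leftover, and nothing on the right-hand side of the lemma can absorb it.

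The missing idea is that the $\chi'$ term is essential, and you must use the specific form of $\chi$:
\[
\chi'(W)=-\chi(W)\Big(\frac{1}{W+\beta}+\frac{1}{W+\gamma}\Big)=-\frac{2W+\beta+\gamma}{\delta}\,\chi(W)^2\le-\frac{\beta+\gamma}{\delta}\,\chi(W)^2 .
\]
With this, the transport terms are bounded by $\int_\Omega P^pf(W)^{-l}\big(C_1\chi^2+D_1l\chi'\big)|\nabla W|^2\,dx$ with $C_1=F(l)/(4D(p-1))$. This is exactly the inequality the paper's proof writes down, and it then chooses $l$ so that $C_1\chi^2+D_1l\chi'\le 0$. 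Setting $\kappa=(\beta+\gamma)/\delta$, that choice is guaranteed by $F(l)\le 4D(p-1)D_1\kappa\, l$. This has a solution $l>0$ if and only if $2D_1\kappa>p(D_1-D)$ and $\big[2D_1\kappa-p(D_1-D)\big]^2\ge p\big[p(D+D_1)^2-4(p-1)DD_1\big]$. So $l$ necessarily depends on $\beta,\gamma,\delta$ as well, and a restriction on the parameters (the paper's ``certain parametric choice'') cannot be avoided. Your claim that a suitable $l=l(D,D_1,p)$ exists without such a condition is not correct.
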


\begin{proof}
    We begin with the following inequality for,

    \begin{eqnarray}
&& \frac{d}{dt}\int_{\Omega} P^{p}\left[ f(W)\right]^{-l} \nonumber \\
&\leq& p\int_{\Omega}P^{p}\left[ f(W)\right]^{-l} \nabla \cdot (\nabla P - P \chi(W)\nabla W)dx + pr \int_{\Omega}P^{p}\left[ f(W)\right]^{-l}(1-P)dx \nonumber\\
&-& lD_{1} \int_{\Omega}P^{p}\left[ f(W)\right]^{-l}\chi(W)\Delta W dx \nonumber\\
&& - l\int_{\Omega}P^{p}\left[ f(W)\right]^{-l}\chi(W)(\frac{P}{1+\nu W} - \mu)Wdx. \nonumber\\
         \end{eqnarray}

Next we show that, under certain parametric choice, we have,

\begin{eqnarray}
&&I_{1} +I_{2} \nonumber \\
&& = 
    p\int_{\Omega}P^{p}\left[ f(W)\right]^{-l} \nabla \cdot (\nabla P - P \chi(W)\nabla W)dx 
- lD_{1} \int_{\Omega}P^{p}\left[ f(W)\right]^{-l}\chi(W)\Delta W dx \nonumber \\
&& \leq 0 \nonumber .\\
\end{eqnarray}
standard calculations following \cite{mizukami2016global} yield,

\begin{equation}
\label{eq:i12}
   I_{1} + I_{2} \leq \int_{\Omega}P^{p}\left[ f(W)\right]^{-l}(C_{1}(\chi(W))^{2}+D_{1}l\chi^{'}(W))|\nabla W|^{2}dx.
\end{equation}
    Note, we can choose l, s.t $C_{1}(\chi(W))^{2}+D_{1}l\chi^{'}(W) \leq 0$, thus showing $I_{1} + I_{2} < 0$, and thus the lemma is proved.
    \end{proof}

Now we use the above lemma to yield, 
    \begin{eqnarray}
&& \frac{d}{dt}\int_{\Omega} P^{p}\left[ f(W)\right]^{-l} \nonumber \\
&\leq&  pr \int_{\Omega}P^{p}\left[ f(W)\right]^{-l}(1-P)dx  - l\int_{\Omega}P^{p}\left[ f(W)\right]^{-l}\chi(W)(\frac{P}{1+\nu W} - \mu)Wdx \nonumber \\
&\leq& (pr+l\mu) \int_{\Omega}P^{p}\left[ f(W)\right]^{-l}-pr\int_{\Omega}P^{p+1}\left[ f(W)\right]^{-l}dx - l\int_{\Omega}P^{p+1}\left[ f(W)\right]^{-l}\frac{\chi(W)}{1+\nu W}dx\nonumber \\
&\leq& (pr+l\mu) \int_{\Omega}P^{p}\left[ f(W)\right]^{-l}-pr\int_{\Omega}P^{p+1}\left[ f(W)\right]^{-l}dx \nonumber \\
&\leq& (pr+l\mu) \int_{\Omega}P^{p}\left[ f(W)\right]^{-l} - C_{1} \left(\int_{\Omega}P^{p}\left[ f(W)\right]^{-l}\right)^{\frac{p+1}{p}} \nonumber .\\
         \end{eqnarray}

From this differential inequality it follows that

\begin{equation}
e^{-l||\chi{(W)}||_{L^{1(0,\infty)}}}
\int_{\Omega}P^{p} dx \leq \int_{\Omega}P^{p}\left[ f(W)\right]^{-l}\leq C.
\end{equation}

This enables the following lemma,

\begin{lemma}
\label{lem:Plp}
Consider \eqref{eq:main_model}. Then for a choice of parameters such that, we have,

\begin{equation}
||P||_{L^{p}(\Omega)} \leq Ce^{l||\chi{(W)}||_{L^{1(0,\infty)}}}.
\end{equation}

\end{lemma}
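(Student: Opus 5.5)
The plan is to convert the weighted differential inequality derived just above into a uniform-in-time bound on the weighted functional $Y(t):=\int_\Omega P^{p}[f(W)]^{-l}\,dx$. Then I would strip off the weight using two-sided bounds on $f(W)=\exp(\int_0^W\chi(s)\,ds)$.

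First, I fix $l$ as in Lemma~\ref{lem:pf1}, so that $C_1\chi(W)^2+D_1 l\chi'(W)\le 0$. This makes the diffusion and cross-diffusion contributions $I_1+I_2$ nonpositive. The chain of inequalities following that lemma then gives
\[
Y'(t)\le (pr+l\mu)\,Y(t)-C_1\,Y(t)^{\frac{p+1}{p}} .
\]
To justify the last step, from $-pr\int_\Omega P^{p+1}f^{-l}$ to $-C_1 Y^{(p+1)/p}$, I would apply H\"older's inequality in the form
\[
\int_\Omega P^{p}f^{-l}\le\Big(\int_\Omega P^{p+1}f^{-l}\Big)^{\frac{p}{p+1}}\Big(\int_\Omega f^{-l}\Big)^{\frac{1}{p+1}} ,
\]
together with $f\ge 1$, which holds because $\chi>0$ and $W\ge 0$. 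This gives $\int_\Omega f^{-l}\le|\Omega|$, so $C_1=pr|\Omega|^{-1/p}$ works.

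Second, I compare $Y$ with the Bernoulli-type ODE $y'=ay-by^{(p+1)/p}$. The comparison principle yields
\[
Y(t)\le \max\Big\{Y(0),\ (a/b)^{p}\Big\}\qquad\text{for all } t .
\]
Here $Y(0)\le\int_\Omega P_0^p$ since $f^{-l}\le 1$. This bound is uniform in time and depends only on the parameters and the initial data.

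Third, I remove the weight. Since $\chi(s)=\delta/((s+\beta)(s+\gamma))$ is integrable on $(0,\infty)$, we have
\[
\int_0^{W}\chi(s)\,ds\le\|\chi\|_{L^1(0,\infty)}=\tfrac{\delta}{\gamma-\beta}\ln(\gamma/\beta)
\]
when $\gamma\neq\beta$, or $\delta/\beta$ when $\gamma=\beta$. Hence $f(W)^{-l}\ge e^{-l\|\chi\|_{L^1(0,\infty)}}$ pointwise, regardless of any bound on $W$. It follows that
\[
\int_\Omega P^p\,dx\le e^{l\|\chi\|_{L^1}}\,Y(t)\le C e^{l\|\chi\|_{L^1}} ,
\]
and taking the $p$-th root gives the claim, after absorbing constants.

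The main obstacle is the first step, namely verifying $I_1+I_2\le0$ from Lemma~\ref{lem:pf1} with a concrete $l$. Expanding $I_1$ by integration by parts produces the good term $-p(p-1)\int P^{p-2}f^{-l}|\nabla P|^2$ and cross terms in $P^{p-1}\nabla P\cdot\nabla W$. These must be absorbed by Young's inequality into that good term plus $C_1\chi^2|\nabla W|^2$. The positive multiple of $-\chi'$ coming from the $l D_1\Delta W$ term must then dominate. Since $-\chi'/\chi^2=(2W+\beta+\gamma)/\delta\ge(\beta+\gamma)/\delta$, a choice of $l$ large relative to $C_1\delta/(D_1(\beta+\gamma))$ should suffice. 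I would carry out this bookkeeping explicitly, taking care that the boundary terms vanish by the Neumann conditions. The phrase ``for a choice of parameters'' in the statement would then be exactly this restriction on $l$.
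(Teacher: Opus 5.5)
Your route is the paper's route. You work with $Y=\int_\Omega P^{p}[f(W)]^{-l}\,dx$, use Lemma~\ref{lem:pf1} to discard the gradient terms, compare with $y'=ay-by^{(p+1)/p}$, and strip the weight using $1\le f(W)\le e^{\|\chi\|_{L^1(0,\infty)}}$. Your H\"older step, the bound $Y\le\max\{Y(0),(a/b)^p\}$ and the value of $\|\chi\|_{L^1(0,\infty)}$ are all correct, and they make explicit what the paper leaves implicit. One small correction: the linear rate should be $pr+l\mu\delta/\gamma$, since $\chi(W)W\le\delta/\gamma$.

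The gap is in the step you single out, which the paper also only asserts. Taking $l$ large does not give $I_1+I_2\le 0$, because the constant $C_1$ in \eqref{eq:i12} depends on $l$. With cell motility $D$, the cross terms come from both $I_1$ and $I_2$ (you omitted the one from $I_2$):
\begin{align*}
I_1+I_2={}&-p(p-1)D\int_\Omega P^{p-2}f^{-l}|\nabla P|^2+p\bigl(D(p-1+l)+lD_1\bigr)\int_\Omega P^{p-1}f^{-l}\chi\,\nabla P\cdot\nabla W\\
&-(pDl+l^2D_1)\int_\Omega P^{p}f^{-l}\chi^2|\nabla W|^2+lD_1\int_\Omega P^{p}f^{-l}\chi'|\nabla W|^2 .
\end{align*}
The cross coefficient is linear in $l$. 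Absorbing it into the good term by Young's inequality leaves the coefficient
\[
C_1(l)=\frac{p\bigl(D(p-1)+l(D+D_1)\bigr)^2}{4(p-1)D}-pDl-l^2D_1 .
\]
Its $l^2$-coefficient, $\frac{(p-1)(D-D_1)^2+(D+D_1)^2}{4(p-1)D}$, is strictly positive. Meanwhile $\chi'=-\chi^2(2W+\beta+\gamma)/\delta$ gains only $lD_1(\beta+\gamma)/\delta$ in the binding case $W=0$.

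So the sign condition $C_1(l)\chi^2+lD_1\chi'\le 0$ for all $W\ge0$ amounts to $C_1(l)\le lD_1(\beta+\gamma)/\delta$. Since $C_1(l)/l\to\infty$ both as $l\to0$ and as $l\to\infty$, the admissible $l$ form a bounded window. That window is nonempty only when $\delta$ is small relative to $\beta+\gamma$, in a way depending on $D$, $D_1$ and $p$. For example, with $D=D_1=1$ and $p=2$ you get exactly $C_1(l)=\frac12+l^2$. You then need $l^2-l(\beta+\gamma)/\delta+\frac12\le 0$, which is possible iff $\delta\le(\beta+\gamma)/\sqrt{2}$, and which fails for every large $l$.

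Hence the ``choice of parameters'' in the statement must be a genuine smallness restriction on the chemotactic strength $\delta$, with $l$ then chosen inside this window. It is not a condition on $l$ alone. Once you impose that restriction and verify $I_1+I_2\le0$ this way, the rest of your argument goes through as written.
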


We now make some further estimates for $||W||_{W^{1,\infty}(\Omega)}$, which will enable a trivial estimate for the nonlinear term,
\begin{equation}
    ||P \chi(W) \nabla W||_{L^{p}(\Omega)} \leq C |\chi(W(0))|||P||_{L^{p}(\Omega)}||W||_{W^{1,\infty}(\Omega)}.
\end{equation}

Note, standard estimates via Neumann semi-group yields,

\begin{equation}
    || \nabla W||_{L^{\infty}(\Omega)} \leq C_{1}||(-\Delta + C)^{\rho}W||_{L^{p}(\Omega)}.
\end{equation}

This enables the following Theorem,

\begin{theorem}
\label{thm:w1s1}
    Consider \eqref{eq:main_model}. For positive parameters, and any initial data $P_{0}(x) \in C^{0}(\overline{\Omega}), W_{0}(x) \in W^{1,q}(\Omega)$, then there exists a global classical solution to \eqref{eq:main_model}.
\end{theorem}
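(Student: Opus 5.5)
The plan is the standard three-stage argument for Keller--Segel systems with logistic damping: local existence, an $L^{q}$ bound on $P$ for some $q>n/2$ (in fact $q>n$), and a bootstrap that feeds this bound into an extensibility criterion. We work in the case $p=2$ of \eqref{eq:main_model1n}--\eqref{eq:main_model_bcdn1} on $\Omega\subset\mathbb{R}^n$, $n=1,2$.

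\emph{Local existence and blow-up criterion.} We first establish local existence of a classical solution on a maximal interval $[0,T_{max})$. For $P_0\in C^0(\overline\Omega)$ and $W_0\in W^{1,q}(\Omega)$ with $q>n$, we apply a Banach fixed point argument to the Duhamel formulation in $C^0(\overline\Omega)\times W^{1,q}(\Omega)$, as in \cite{mizukami2016global}. This gives the extensibility criterion: if $T_{max}<\infty$, then
\[
\|P(\cdot,t)\|_{L^\infty(\Omega)}+\|W(\cdot,t)\|_{W^{1,q}(\Omega)}\to\infty \quad\text{as } t\nearrow T_{max}.
\]
Positivity of $P$ and $W$ follows from the maximum principle. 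An upper bound on $W$ comes from comparison: since $W_t\le D_1\Delta W+(\frac{1}{\nu}-\mu W)W$ fails only when $P$ is large, we instead use $\frac{PW}{1+\nu W}\le \frac{P}{\nu}$. Lemma \ref{lem:2.1prime}(i) then bounds $\|W\|_{L^\infty}$ in terms of $\sup_s\|P(\cdot,s)\|_{L^{q'}}$ for any $q'>n/2$. Hence it suffices to control $P$ in some $L^{q'}$ with $q'>n/2$.

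\emph{Uniform $L^{p}$ bound on $P$.} Here $p$ is reused as a generic exponent larger than $n$. We take the weighted functional of Lemma \ref{lem:pf1}, $y(t)=\int_\Omega P^{p}f(W)^{-l}$, and obtain the inequality
\[
y'\le (pr+l\mu)\,y - C\,y^{\frac{p+1}{p}}.
\]
The damping comes from the logistic term, and the cross terms are absorbed by choosing $l$ so that $C_1\chi^2+D_1 l\chi'\le0$. Because $\chi(s)=\delta/((s+\beta)(s+\gamma))$ is integrable on $(0,\infty)$, the weight $f(W)$ is bounded above and below uniformly, independently of any bound on $W$. Lemma \ref{lem:Plp} then gives $\sup_t\|P\|_{L^{p}}\le C$ for some $p>n$.

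\emph{The main obstacle.} The hardest step is justifying the pointwise choice of $l$ in \eqref{eq:i12}. We need
\[
C_1\chi(W)^2+D_1 l\chi'(W)\le 0 \quad\text{for all } W\ge0,
\]
which amounts to $-\chi'/\chi^2$ being bounded below by a positive constant. This holds here because
\[
-\frac{\chi'}{\chi^2}=\frac{2W+\beta+\gamma}{\delta}\ge\frac{\beta+\gamma}{\delta}.
\]
We would check it explicitly. We would also check that the cross term $\int P^{p-1}f^{-l}\chi\nabla P\cdot\nabla W$ can be completed to a square using the diffusion $\int P^{p-2}f^{-l}|\nabla P|^2$; this is where the constant $C_1$ depends on $p$ and $D_1$.

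\emph{Bootstrap to classical regularity.} With $P\in L^\infty(L^{p})$ for $p>n$, Lemma \ref{lem:2.1prime}(ii) bounds $\|\nabla W\|_{L^\infty}$ in terms of the source, since $\frac{P}{1+\nu W}W\le P/\nu\in L^{p}$ and $\frac12+\frac{n}{2p}<1$. The chemotactic flux then satisfies $\|P\chi(W)\nabla W\|_{L^{p}}\le C$. Applying the variation-of-constants formula to the $P$ equation, together with Neumann heat semigroup smoothing estimates $\|e^{t\Delta}\nabla\cdot\|_{L^{p}\to L^\infty}\lesssim(1+t^{-\frac12-\frac{n}{2p}})e^{-\lambda t}$, yields $\sup_t\|P\|_{L^\infty}\le C$. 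The logistic term is handled by the bound $rP(1-P/K)\le rK/4$ for the positive part. Together with the $W^{1,\infty}$ bound on $W$, this contradicts the extensibility criterion, so $T_{max}=\infty$. Standard parabolic Schauder theory then upgrades the solution to a global classical one.
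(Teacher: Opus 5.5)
Your route is the paper's: the weighted functional $\int_\Omega P^{p}f(W)^{-l}$ of Lemma~\ref{lem:pf1}, the $L^{p}$ bound of Lemma~\ref{lem:Plp}, then a semigroup bound on $\nabla W$ fed into a blow-up criterion. There is a genuine gap at exactly the step you single out, and your justification of it fails: the constant $C_1$ in \eqref{eq:i12} depends on $l$. Differentiating $f^{-l}$ produces $l$-dependent cross terms, both in the integration by parts of the cell flux and in the $D_1\Delta W$ term. The gradient part of $\frac{d}{dt}\int_\Omega P^{p}f^{-l}$ is exactly
\[
\begin{aligned}
&-pD(p-1)\int_\Omega P^{p-2}f^{-l}|\nabla P|^2
+p\bigl[D(p-1)+l(D+D_1)\bigr]\int_\Omega P^{p-1}f^{-l}\chi\,\nabla P\cdot\nabla W\\
&\quad+\int_\Omega P^{p}f^{-l}\bigl[lD_1\chi'-l(pD+lD_1)\chi^2\bigr]|\nabla W|^2 .
\end{aligned}
\]
Completing the square therefore gives
\[
C_1(l)=\frac{p\bigl[D(p-1)+l(D+D_1)\bigr]^2}{4D(p-1)}-l(pD+lD_1).
\]
This is quadratic in $l$ with leading coefficient $\frac{p(D-D_1)^2+4DD_1}{4D(p-1)}>0$, while the favourable term $lD_1\chi'$ is only linear in $l$, so taking $l$ large does not help. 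With your identity $-\chi'/\chi^2=(2W+\beta+\gamma)/\delta$, the requirement for all $W\ge0$ is $C_1(l)\le lD_1(\beta+\gamma)/\delta$. Optimizing over $l>0$, this can be met if and only if
\[
\frac{\beta+\gamma}{\delta}\ \ge\ \frac{p(D_1-D)+\sqrt{p^2(D-D_1)^2+4pDD_1}}{2D_1}\qquad\bigl(=\sqrt{p}\ \text{ when } D=D_1\bigr).
\]
The square completion is sharp pointwise (take $\nabla P$ parallel to $\nabla W$), so this is a genuine smallness condition on $\delta$. Changing the weight does not rescue the pointwise argument either. For a general weight $\varphi(W)$, the same computation forces $\psi=\varphi'/\varphi$ to satisfy a Riccati inequality $\psi'\ge c(\psi^2+\chi^2)$ with $c=c(p,D,D_1)>0$. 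Its solutions cannot stay finite on $[0,1]$ once $\delta$ is large.

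Consequently your $L^{p}$ bound, and the whole bootstrap built on it, holds only under a restriction of this type, not for arbitrary positive parameters as the theorem states. The paper's sketch asserts the same choice of $l$ at \eqref{eq:i12} without checking it, so following it does not close the gap.

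For $n\le2$ you can avoid weights altogether, using only $\chi\le\delta/(\beta\gamma)$ and the logistic term.
\begin{enumerate}
\item Integrating the $P$-equation gives $\sup_t\int_\Omega P\le C$ and $\int_t^{t+1}\!\int_\Omega P^2\le C$.
\item Testing the $W$-equation with $-\Delta W$ and using $0\le\frac{PW}{1+\nu W}\le P/\nu$ gives $\sup_t\|\nabla W\|_{L^2}\le C$ and $\int_t^{t+1}\|\Delta W\|_{L^2}^2\le C$.
\item In $\frac{d}{dt}\int_\Omega P^2$, bound the taxis term by $\|P\|_{L^4}^2\|\nabla W\|_{L^4}^2$. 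The two-dimensional Gagliardo--Nirenberg inequality then gives $y'\le C(1+\|\Delta W\|_{L^2}^2)\,y$ for $y=\|P\|_{L^2}^2$, and the uniform Gronwall lemma bounds $\|P\|_{L^2}$.
\item Lemma~\ref{lem:2.1prime}(ii) gives $\nabla W\in L^\infty(L^q)$ for every $q<\infty$. A standard iteration then yields $\sup_t\|P\|_{L^p}\le C$ for some $p>2$, and your bootstrap goes through from there.
\end{enumerate}
One minor point: apply Lemma~\ref{lem:2.1prime}(ii) from a small positive time $t_0$. That lemma needs $\nabla W(\cdot,t_0)\in L^\infty$, which $W_0\in W^{1,q}$ alone does not provide.
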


\begin{remark}
We see that even for $D_{1}=0$, the estimates via \eqref{eq:i12} and lemma \ref{lem:Plp}, hold, thus the well posedness of our system without diffusion in the VEGF is also guaranteed. 
    \end{remark}

We now state certain propositions,

\begin{proposition}
    Consider \eqref{eq:main_model} with $p=2$, then for certain initial data and parameter choice, solutions can lead to shock formation. However, for $p>2$, solutions initiating from the same initial data and parameters do not form shocks.
\end{proposition}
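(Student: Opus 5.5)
We prove the proposition in the one-dimensional base setting of \cref{eq:base_model} on $[0,L]$, with $\theta=0$, the data of \cref{fig:base_p2}, and shock understood as the gradient $P_x$ becoming unbounded (or $P$ developing a jump) in finite time while $P$ stays bounded. There are two parts: an existence statement for $p=2$ and a regularity statement for $p>2$.

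\emph{The case $p=2$.} We follow Sleeman and Levine \cite{levine1997siam}. With $\Phi(W)=[(W+\beta)/(W+\gamma)]^{a}$ and $W_t=(P/(1+\nu W)-\mu)W$, the equation for $P$ is
\[
P_t = D\bigl(P_x - P(\ln\Phi(W))_x\bigr)_x .
\]
In the regime $\lambda=1/\nu$ large and $\gamma\gg W\gg\beta$, we have $\ln\Phi(W)\approx a\ln(W/\gamma)$ and $W_t\approx(P-\mu)W$, so this reduces to \cref{levine_model}.

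1. Set $u=\ln W$. Then $u_t=P-\mu$, so $P=u_t+\mu$.
2. Substituting into the $P$ equation gives a single equation of the form $u_{tt}=D(u_{tx}-a(u_t+\mu)u_x)_x$. For $a>1$ its principal part contains a backward (hyperbolic-type) transport term $-aD\,u_t u_{xx}$.
3. Use the explicit separable and traveling-wave families from \cite{levine1997siam,levine2001theory} to exhibit data for which $P_x$ blows up at an interior point while $P$ stays bounded. This is the shock-form aggregation reproduced in \cref{fig:base_p2}.
4. Pass from the reduced model to \cref{eq:base_model} by a perturbation argument in the small parameters $\nu$, $\beta/W$ and $W/\gamma$, keeping the same initial data.

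\emph{The case $p>2$.} Here we show $P_x$ stays bounded for the same data, so no shock forms.

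1. Write $v=P_x$ and differentiate the $P$ equation in $x$. The diffusion part becomes $D\,(\,(p-1)|v|^{p-2}v_x)_{xx}$.
2. Apply a Bernstein-type maximum-principle argument to $\tfrac12 v^2$, or test against $|v|^{q-2}v$ and run an Alikakos iteration as in \cref{lem:ge11n}.
3. When $|v|$ is large, the degenerate-slow diffusion coefficient $(p-1)|v|^{p-2}$ grows like $|v|^{p-2}$. It therefore dominates the chemotactic contribution, which is at most linear in $v$ because $\chi(W)W_x$ and $\chi(W)W_{xx}$ are controlled through the $W$ ODE.
4. This yields a uniform bound on $\|P_x\|_\infty$. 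Equivalently, use the DiBenedetto $C^{1,\alpha}$ regularity for degenerate parabolic equations of $p$-Laplacian type with bounded lower-order terms, which applies once $\|P\|_\infty$ is bounded as in \cref{lem:ge11n}.
5. The bound on $P_x$ gives a bound on $W_x$ through $W_{xt}=(\dots)W_x+\frac{WP_x}{(1+\nu W)^2}$, and Gronwall closes the loop.

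\emph{Main obstacle.} The key difficulty is the feedback between $W_x$ and $P_x$. The ODE for $W$ produces no smoothing, so $W_x$ can grow exponentially in $\|P_x\|_{L^1_t}$, while $W_{xx}$ appears in the chemotaxis term of the differentiated equation. To handle this, we first work with the flux $J=|P_x|^{p-2}P_x-P\chi(W)W_x$ rather than with $P_x$, since $J$ satisfies a uniformly structured equation. We then need an $L^\infty$ bound on $W_x$ on $[0,T]$ obtained from $L^p$ control of $P_x$, which follows from \cref{eq:14l}.

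If that fails, we fall back on a comparison construction: the self-similar shock profile for $p=2$ is a strict supersolution-violating configuration once $p>2$, because the added degenerate diffusion term is positive where $|P_x|>1$. We would show this directly on the explicit family from \cite{levine1997siam} used in the $p=2$ part.
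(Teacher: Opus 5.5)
The paper does not prove this proposition. It is motivated only by the simulations in Figures~\ref{fig:base_p2}, \ref{fig:base_p3} and \ref{fig:base_p_scan}, and the Discussion explicitly defers a rigorous proof to future work.

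Judged on its own, your plan fails at the level of formulation. The statement concerns \eqref{eq:main_model}, which has $D_1\Delta W$ and logistic damping. In one dimension with $p=2$ and $\chi(W)\le\delta/(\beta\gamma)$, the standard $L^m$--Moser--Schauder argument gives global bounded classical solutions for all smooth nonnegative data. This is exactly \cref{thm:w1s1}, and the remark after it asserts the same for $D_1=0$. So a shock in your sense, finite-time blow-up of $P_x$, cannot occur for the model in the statement. You avoid this only by switching to \eqref{eq:base_model}, where $u=\ln W$ reduces the system to one equation; with $D_1>0$ one gets $u_t=D_1(u_{xx}+u_x^2)+\dots$ and no such reduction.

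Even for \eqref{eq:base_model}, Step~4 does not work:
\begin{itemize}
\item The quantities $\nu W$, $\beta/W$, $W/\gamma$ are properties of the solution, not parameters. For the data you fixed, $W_0=e^5\approx148>\gamma=100$, so $\ln\Phi\approx a\ln(W/\gamma)$ is false already at $t=0$, and it only worsens where cells aggregate and $W$ grows.
\item The difference is structural. In \eqref{levine_model}, $\ln\Phi=a\ln W$ is unbounded, whereas here $a\ln(\beta/\gamma)\le\ln\Phi\le0$. Writing $P=\Phi(W)q$ turns the $P$-equation ($p=2$, $\theta=0$) into $(\Phi q)_t=D(\Phi q_x)_x$. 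Since $(\ln\Phi)_t\ge-\mu\chi(W)W\ge-\mu\delta/\gamma$, the maximum principle gives $\|P(t)\|_\infty\le e^{\mu\delta t/\gamma}\|P_0/\Phi(W_0)\|_\infty$. So $P$ cannot concentrate in finite time, and nothing in your argument produces a gradient singularity instead.
\item A perturbation argument cannot carry a singularity without a stability theory for the singular profile, because continuous dependence holds only on $[0,T^*-\eta]$, in norms that blow up at $T^*$.
\item The explicit families come with their own data, not $1-0.3\cos(2\pi x)$, so the ``same data'' clause is lost.
\end{itemize}

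Part~2 does not close, and where it would close it proves too much:
\begin{itemize}
\item Differentiating gives $v_t=D\bigl((p-1)|v|^{p-2}v_x\bigr)_x-D\bigl(P\chi(W)W_x\bigr)_{xx}$, one derivative fewer than you wrote. Testing against $|v|^{q-2}v$ therefore needs $W_{xx}$, and a Bernstein argument needs $W_{xxx}$. With $D_1=0$ these come from the ODE only through time integrals of $P_{xx}$ and $P_{xxx}$, so your claim that the chemotactic part is ``at most linear in $v$'' is circular.
\item The flux does not help: $J_t=D(p-1)|P_x|^{p-2}J_{xx}-(P\chi(W)W_x)_t$ degenerates wherever $P_x=0$, in particular at the crest of an aggregate.
\item The ODE only gives $|W_x(x,t)|\le Ce^{Ct}\bigl(|W_{0,x}(x)|+\int_0^t|P_x(x,s)|\,ds\bigr)$, which requires $P_x\in L^\infty_xL^1_t$. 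The space--time bound \eqref{eq:14l} yields only $W_x\in L^p$.
\item \cref{lem:ge11n} is proved for the regularization of \eqref{eq:main_model} with $D_1>0$; the bound \eqref{eq:12l} comes from heat-semigroup smoothing. It also requires the smallness condition of \cref{lem:lpb}, $\|P_0\|_\infty\le(\beta\gamma D/(4\delta))^2\approx8\times10^{-9}$ for the parameters of \cref{fig:base_p2}, whereas your data have $\|P_0\|_\infty=1.3$.
\item DiBenedetto's $C^{1,\alpha}$ theory needs the drift $P\chi(W)W_x$ to be H\"older continuous, not merely bounded; bounded drift gives only $C^{\alpha}$. With $D_1=0$, H\"older control of $W_x$ requires H\"older control of $P_x$.
\item Most importantly, wherever this chain does close (e.g.\ for \eqref{eq:main_model} with $D_1>0$ once $P$ is bounded), it closes equally for $p=2$, with Schauder in place of DiBenedetto. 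It thus rules out the very shock of Part~1. The condition $p>2$ enters only through the heuristic of Step~3, never through an estimate.
\item The fallback also fails. The coupled system is not order preserving, so there is no comparison principle, and showing that a $p=2$ profile does not solve the $p>2$ problem says nothing about the $p>2$ solution.
\end{itemize}

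The missing idea is a notion of shock that classical regularity at $p=2$ does not exclude. One option is a lower bound on $\sup_{t\le T}\|P_x(t)\|_\infty$ that becomes unbounded for $p=2$ along a singular family of parameters approaching \eqref{levine_model} ($\beta\to0$, $\gamma\to\infty$, $\nu\to0$). This would need to be paired with an upper bound for $p>2$ that is uniform along the same family and genuinely uses the growth of $|P_x|^{p-2}$.
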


\begin{proposition}
    Consider \eqref{eq:main_model} with $p=2$, then for certain initial data and parameter choice, solutions can lead to one spike dynamics. However, for $1<p<2$, solutions initiating from the same initial data and parameters can lead to multiple spike dynamics.
\end{proposition}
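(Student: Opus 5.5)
The plan is to argue constructively in one space dimension, $\Omega=(0,L)$, with $D_1=0$ and $r=0$, i.e.\ the base model \eqref{eq:base_model}. The proposition is existential, so it suffices to exhibit one family of initial data and parameters for which the $p=2$ solution concentrates into a single spike while the $1<p<2$ solution develops at least two separated local maxima. For $p=2$ I will invoke the Sleeman--Levine exact solutions \cite{levine1997siam}. Writing $P=W\,\partial_t(\ln W)$-type relations through the ODE $W_t=(P/(1+\nu W)-\mu)W$, in the regime $\nu\to0$ the system reduces, for the logarithmic sensitivity $\Phi$, to a linear equation for $\ln W$ after the standard transformation. This yields explicit separable solutions whose $P$-profile concentrates at a single interior point when the initial perturbation is the lowest Neumann mode $1-\varepsilon\cos(2\pi x/L)$; that gives the one-spike statement. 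Theorem \ref{thm:w1s1} guarantees that this is the unique classical solution for the chosen data.

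For $1<p<2$ the step is local, and it uses the same data. Near $x=L/2$ the $p=2$ spike is produced by the flux balance $P_x \approx P\,(\ln\Phi(W))_x$, i.e.\ by $P\approx C\,\Phi(W)$ on the aggregating set. For $p<2$ the quasi-equilibrium instead satisfies
\[
|P_x|^{p-2}P_x = P\,(\ln\Phi(W))_x .
\]
Where $P_x$ is small (at the top of the hump, where $W_x\to0$), this forces $|P_x| = (P|(\ln\Phi)_x|)^{1/(p-1)}$, and $1/(p-1)>1$. Hence the gradient supported by chemotaxis is much smaller than the chemotactic drive near the center and larger on the flanks. I will make this precise through a lower bound on the mass transported to the two inflection points, using the weak formulation with test functions localized near $x_\pm = L/2\pm \ell$, together with the uniform bounds of Lemma \ref{lem:ge11n}. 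The aim is to show that $\int_{x_\pm-\eta}^{x_\pm+\eta}P$ grows faster than $\int_{L/2-\eta}^{L/2+\eta}P$ on a time interval $[0,t_0]$.

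Comparing the two localized masses then gives two strict local maxima, i.e.\ multiple spike dynamics. The finite time extinction mechanism behind Theorem \ref{thm:FFTEdd} can be reused locally near the center to show depletion of the middle region. That is the trough observed in \cref{fig:base_p15}.

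The main obstacle is the second step. There is no exact solution for $p\neq2$, and the weak solutions of Theorem \ref{thm:w1s} carry no pointwise information, so I do not yet have a rigorous way to turn the flux comparison into genuine separated maxima. My first attempt will be an explicit sub/supersolution pair for the $P$-equation with $W$ frozen over a short time, built from the profile $P\sim(\ln\Phi(W))_x^{1/(p-1)}$. If that fails, I would settle for a rigorous statement on the formal quasi-steady reduction, supported by the numerics in \cref{fig:base_p_scan}.
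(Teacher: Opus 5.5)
\textbf{The central step is missing.} The paper does not prove this proposition. It is stated without argument after \cref{thm:w1s1}, supported only by the simulations in \cref{fig:base_p2}, \cref{fig:base_p15} and \cref{fig:base_p_scan}, and \cref{discussions} explicitly defers a rigorous proof to future work. Your proposal, as you acknowledge, does not close that step either. The part you treat as routine also does not hold up. The exact solutions of \cite{levine1997siam} belong to \eqref{levine_model}, which has sensitivity $a/W$ and kinetics $\lambda PW-\mu W$ and no $W$-diffusion or logistic term. Here $\chi(W)=\delta/((W+\beta)(W+\gamma))$ is bounded, the $W$-kinetics saturate, and \eqref{eq:main_model} carries $D_1\Delta W$ and $rP(1-P/K)$. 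Reaching the Levine--Sleeman setting requires $\nu\to0$, $\beta\to0$, $\gamma\to\infty$ on top of $D_1=r=0$. These are singular limits outside the positive parameter range, and you would need a stability result in those limits that you do not have. Moreover, those exact families aggregate through finite-time singularities (blow-up or collapse), which \cref{thm:w1s1} rules out for \eqref{eq:main_model}. That theorem also gives existence only, not the uniqueness you invoke.

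\textbf{For $1<p<2$, your starting heuristic cannot produce two maxima.} Zero flux, $(|P_x|^{p-2}+\theta)P_x=P\chi(W)W_x$, forces $\mathrm{sign}\,P_x=\mathrm{sign}\,W_x$ wherever $P>0$. So as long as $W$ is unimodal, $P$ is monotone on each side of the maximum of $W$. The exponent $1/(p-1)$ only flattens the top and steepens the shoulders, which gives a plateau; that is exactly what \cref{fig:base_p_scan} shows for $1.55\le p\le 2$. Two spikes require $W$ itself to become bimodal through the coupled transient (in \cref{fig:base_p15}, $W$ mirrors $P$), and nothing in your plan controls the shape of $W$. For the same reason, a frozen-$W$ sub/supersolution pair can at best capture a transient.

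\textbf{The tools you cite do not apply in your setting.} \cref{lem:ge11n} and \cref{thm:w1s} use $D_1>0$ (through the semigroup bound \eqref{eq:wwp}) and $r>0$ (the absorption that closes \cref{lem:lpb}). They also need small data, with threshold $(\beta\gamma D/(4\delta))^2\approx 8\times10^{-9}$ for the parameters of \cref{fig:base_p15}. That is far below the data $P_0\approx 1$, $W_0=e^5$ where spikes appear, and in any case weak solutions carry no pointwise information. \cref{thm:FFTEdd} is a scalar differential inequality for $\|P\|_{L^2(\Omega)}$ over the whole domain, so it has no localized version: cut-offs introduce flux terms that destroy its sublinear structure. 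Global extinction would also remove the spikes at $x_\pm$. The existence of one-spike versus multi-spike dynamics therefore remains unproved, both in the paper and in your proposal.
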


\section{Pattern-forming Instability}
\label{pattern}
Turing patterns in chemotaxis models have attracted considerable interest, \cite{painter2002volume,ma2016pattern,diele2025pattern}. We study Turing-type instability and pattern formation for the $p$-Laplacian chemotaxis--VEGF model with a logistic  term
for the cell density $P$:
\begin{equation}\label{eq:pat_base}
\begin{cases}
P_t = D_P\,\nabla\cdot\Big[(\theta_1|\nabla P|^{p-2}+\theta_2)\nabla P - P\,\dfrac{\delta}{(W+\beta)(W+\gamma)}\,\nabla W\Big] + r_1P\Big(1-\dfrac{P}{K}\Big),\\[6pt]
W_t = D_W\Delta W + r_2\Big(\dfrac{P}{1+\nu W}-\mu\Big)W,
\end{cases}
\end{equation}
on $\Omega$ (We take a bounded interval in 1D or a rectangle in 2D for convenience.)
with zero-flux boundary conditions \cref{eq:main_model_bcd}.

\subsection{Stability Analysis ($p>2$)}
\label{sec:experiments:stability}

Setting all spatial derivatives to zero in \cref{eq:pat_base} gives the coexistence homogeneous steady state
\begin{equation}\label{eq:pat_ss}
  P^*=K,\qquad W^*=\frac{K-\mu}{\mu\nu}\qquad(\text{requires } K>\mu),
\end{equation}

Consider the one-dimension case in space. Write $P=P^*+\widetilde P$, $W=W^*+\widetilde W$ with
$\widetilde P,\widetilde W\ll 1$. Since $P_x^*=0$, the degenerate term
$\theta_1|P_x|^{p-2}P_x$ vanishes at linear order whenever $p>2$: writing
$\phi(s):=|s|^{p-2}s$, one has
$\phi'(0)=(p-1)\lim_{s\to0}|s|^{p-2}=0$, so only the constant $\theta_2$
contributes an effective diffusion coefficient $D_P\theta_2$. The
chemotactic drift linearises to $D_PP^*\chi^*\widetilde W_x$ by Taylor's expansion, with
\begin{equation}
  \chi^*:=\chi(W^*)=\frac{\delta}{(W^*+\beta)(W^*+\gamma)}.
\end{equation}
Differentiating the kinetic terms
$f(P,W)=r_1P(K-P)$ and $g(P,W)=r_2\bigl(P/(1+\nu W)-\mu\bigr)W$
at $(P^*,W^*)$ gives
\begin{equation}\label{eq:pat_jacobian}
  f_P^*=-r_1K,\qquad f_W^*=0,\qquad
  g_P^*=\frac{r_2\mu W^*}{K},\qquad
  g_W^*=-\frac{r_2\mu(K-\mu)}{K}.
\end{equation}

Zero-flux boundary conditions admit normal modes
$\widetilde P=Ae^{\lambda t}\cos(kx)$, $\widetilde W=Be^{\lambda t}\cos(kx)$
with $k=n\pi/L$, $n\in\mathbb Z$. Substituting into the linearised system
gives the eigenvalue problem $M(k)(A,B)^{\mathsf T}=\lambda(A,B)^{\mathsf T}$,
\begin{equation}\label{eq:pat_M}
  M(k)=
  \begin{pmatrix}
    -D_P\theta_2k^2+f_P^* & D_PP^*\chi^*k^2+f_W^*\\[4pt]
    g_P^* & -D_Wk^2+g_W^*
  \end{pmatrix}.
\end{equation}
The characteristic polynomial (\textbf{dispersion relation}) $\lambda^2-\mathrm{tr}(k^2)\lambda+\det(k^2)=0$
has
\begin{align}
  \mathrm{tr}(k^2) &= -(D_P\theta_2+D_W)k^2+(f_P^*+g_W^*),\label{eq:pat_tr}\\
  \det(k^2) &= D_P\theta_2D_Wk^4
    -\bigl[D_P\theta_2g_W^*+D_Wf_P^*+g_P^*D_PP^*\chi^*\bigr]k^2
    +f_P^*g_W^*.\label{eq:pat_det}
\end{align}

\subsubsection{Instability condition}
Since $f_P^*g_W^*-f_W^*g_P^*=r_1r_2\,\frac{K\mu(K-\mu)}{K}>0$ and $f_P^*+g_W^*<0$ for all $r_1,r_2>0$,
the spatially homogeneous kinetics alone are always stable.
And the homogeneous steady state can only lose stability
through $\det(k^2)$ turning negative at some admissible $k^2>0$.

Write $\det(x)=Ax^2+Bx+C$ with $x:=k^2$, where
\begin{equation}
  A=D_P\theta_2D_W>0,\qquad
  C=f_P^*g_W^*>0,\qquad
  B=-\bigl[D_P\theta_2g_W^*+D_Wf_P^*+g_P^*D_PP^*\chi^*\bigr].
\end{equation}
Because $A,C>0$, the parabola $\det(x)$ is positive at $x=0$ and opens
upward. Consequently, if the parameters admit real positive roots
$0<x_1<x_2$ of $\det(x)=0$, then
\begin{equation}
  \det(k^2)<0 \iff x_1<k^2<x_2,
\end{equation}
i.e.\ the instability band is a finite window
$k\in(k_{\mathrm{lo}},k_{\mathrm{hi}})=(\sqrt{x_1},\sqrt{x_2})$ strictly
between two neutral (marginal) wavenumbers; the system is linearly
stable for $k<k_{\mathrm{lo}}$ and for $k>k_{\mathrm{hi}}$.

Two conditions are required for such roots to exist: $B<0$ and $B^2>4AC$. The true onset of instability is where the
parabola's minimum value first touches zero, i.e.\ $B^2=4AC$. Expanding these two instability conditions gives
\begin{equation}\label{eq:conds_explicit}
\begin{cases}
D_WK\,r_1 \;<\; \frac{D_P\mu}{K}\Big[KW^*\chi^*-\theta_2(K-\mu)\Big]\,r_2,\\[5pt]
\Big(D_WK\,r_1-Q\,r_2\Big)^2
  \;>\;
  4\,D_P\theta_2D_W\,\mu(K-\mu)\,r_1r_2,
\end{cases}
\end{equation}
where $Q=\frac{D_P\mu}{K}\Big[KW^*\chi^*-\theta_2(K-\mu)\Big]$. We perform tests to show the area of instability in \cref{sec4.2.1}. 

\begin{remark}[Two spatial dimensions]\label{remark_Turing_2D}
On a rectangular domain $[0,L_x]\times[0,L_y]$ with Neumann boundary
conditions, admissible normal modes are
$\cos(k_xx)\cos(k_yy)$ with $k_x=n\pi/L_x$, $k_y=m\pi/L_y$, i.e.\
eigenfunctions of the 2D Laplacian operator with eigenvalue
$-|\mathbf k|^2=-(k_x^2+k_y^2)$. Because every spatial operator in
\eqref{eq:pat_base} (the diffusion term, the chemotactic drift, and
$D_W\nabla^2W$) is isotropic, the linearised system in 2D is identical to
\eqref{eq:pat_M} with $k^2$ reinterpreted as $|\mathbf k|^2$.(Note that high-oder infinitesimal $|\nabla \widetilde P|^{p-2}\nabla \widetilde P\sim O(\varepsilon^{p-1})$ is omitted during linearisation, where $\epsilon$ represents the amplitude of $\widetilde P$.) The two instability conditions \eqref{eq:conds_explicit} are also unchanged between one and two spatial dimensions. Here the unstable set in $(k_x,k_y)$-space is an quarter-annulus
$\{(k_x,k_y):k_{\mathrm{lo}}^2<k_x^2+k_y^2<k_{\mathrm{hi}}^2; k_x,k_y\geq 0 \}$. Every
direction on the annulus grows at the same rate for a given
$|\mathbf k|$. In \cref{fig:2d_3pts}, it is seen that patterns with slow diffusion are more connected than ones with only normal diffusion. It is worth-noting that to get patterns which are biologically relevant (i.e., angiogenesis), we may require anisotropic diffusion in both the normal (Fickian) diffusion term and the 
p-Laplacian term.
\end{remark}

\subsection{Numerical Experiments}
\label{sec:experiments:numerical}
We now illustrate \S\ref{sec:experiments:stability} numerically.

\subsubsection{Instability Region of $(r_1,r_2)$ and Patterns}\label{sec4.2.1}
We firstly fix
all parameters except $(r_1,r_2)$ to show the instability region with respect to $(r_1,r_2)$. The chosen three representative points $(r_1,r_2)$ used
and other parameters are given in \cref{tab:pat_params}. For each of the three points in \cref{tab:pat_params}, the full
nonlinear system \eqref{eq:pat_base} was integrated from the
homogeneous steady state \eqref{eq:pat_ss} perturbed by band-limited
random noise, for both $\theta_1=0$ and
$\theta_1=0.1$.  \cref{fig:1d_instability_region} shows the instability region of $(r_1,r_2)$. And \cref{fig:1d_spacetime_all} and \cref{fig:logistic_1d_amplitude_profile} show patterns and amplitude evolutions at three points $(r_1,r_2)$. The wave number of the saturating pattern in \cref{fig:logistic_1d_amplitude_profile}(f) at the deep-inside point is consistent with the fastest-growing wavenumber in \cref{fig:1d_instability_region}(b).

\begin{table}[htbp]
\footnotesize
\caption{Parameter values and representative $(r_1,r_2)$ points used in
the numerical experiments of \S\ref{sec4.2.1}. The label represents the location of points with respect to instability region. ("Stable" is outside of the instability region; "Near boundary" and "Deep inside" are inside.)}
\label{tab:pat_params}
\begin{center}
\begin{tabular}{|c|c|c|c|c|c|c|c|c|}
\hline
$K$ & $\mu$ & $\nu$ & $\beta=\gamma$ & $\delta$ & $D_P$ & $\theta_2$ & $D_W$ & $p$ \\ \hline
$1$ & $0.5$ & $1$ & $0.1$ & $0.3$ & $0.25$ & $0.05$ & $0.05$ & $4$ \\ \hline
\end{tabular}
\qquad
\begin{tabular}{|l|c|c|}
\hline
Label & $r_1$ ($r_2=1$) & $\max_k\mathrm{Re}(\lambda)$ \\ \hline
Stable        & $1.00$ & $-0.250$  \\
Near boundary & $0.28$ & $0.0057$ \\
Deep inside   & $0.05$ & $0.166$  \\ \hline
\end{tabular}
\end{center}
\end{table}

\begin{figure}[htbp]
\centering
\includegraphics[width=0.75\textwidth]{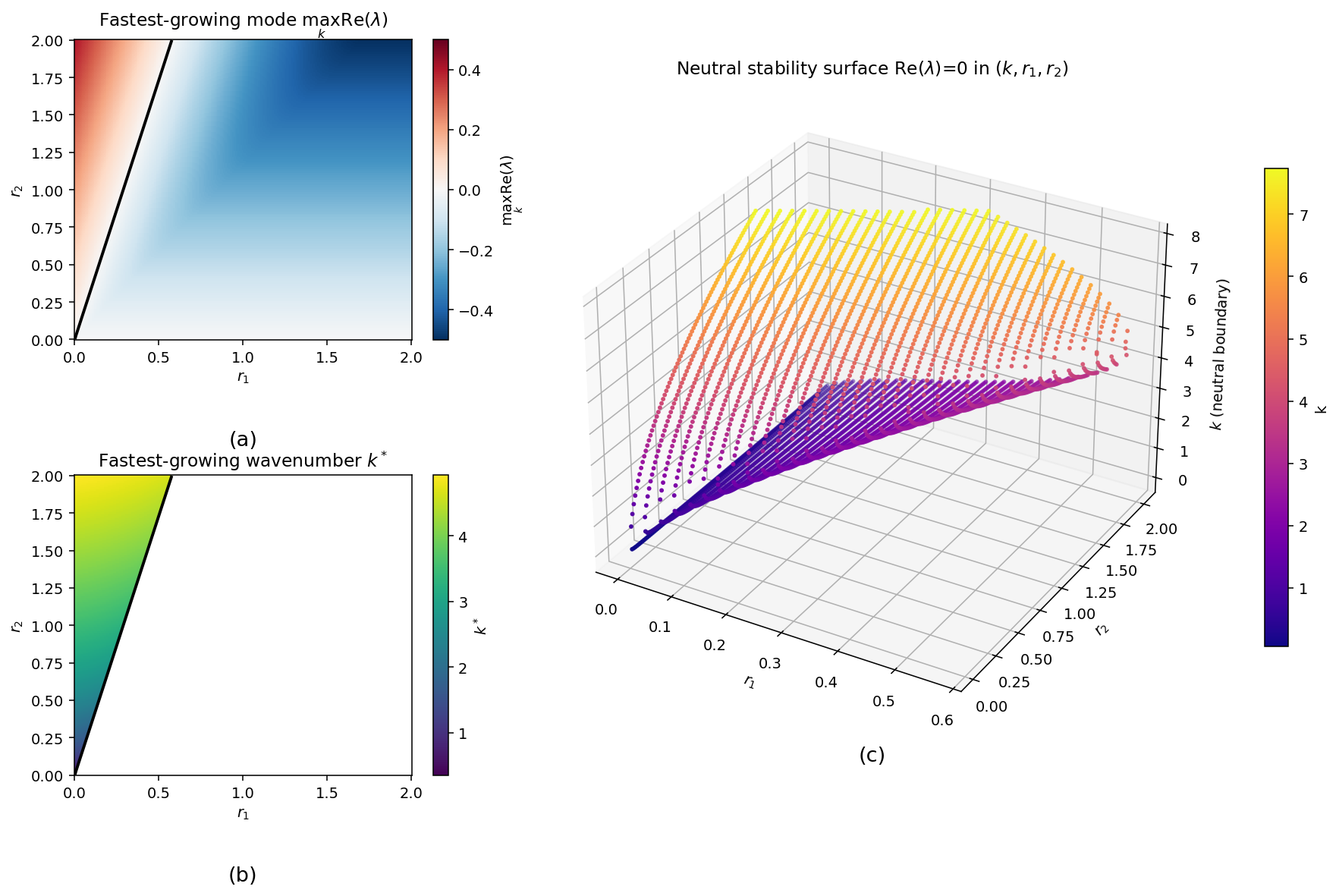}
\caption{(a)-(b) Instability region in $(r_1,r_2)$: the black contour is the neutral boundary of the instability region. Solving instability conditions \eqref{eq:conds_explicit} gives $r_1<\rho r_2$. The red area in (a) represents the instability region; the green area in (b) represents the fastest-growing wavenumber. (c) Neutral-stability surface $\mathrm{Re}(\lambda(k;r_1,r_2))=0$
in $(k,r_1,r_2)$-space; the two branches for each unstable $(r_1,r_2)$
are $k_{\mathrm{lo}}$ and $k_{\mathrm{hi}}$. Only points $(k,r_1,r_2)$ between these two branches have positive growth rates.}
\label{fig:1d_instability_region}
\end{figure}

\begin{figure}[htbp]
\centering
\includegraphics[width=0.95\textwidth]{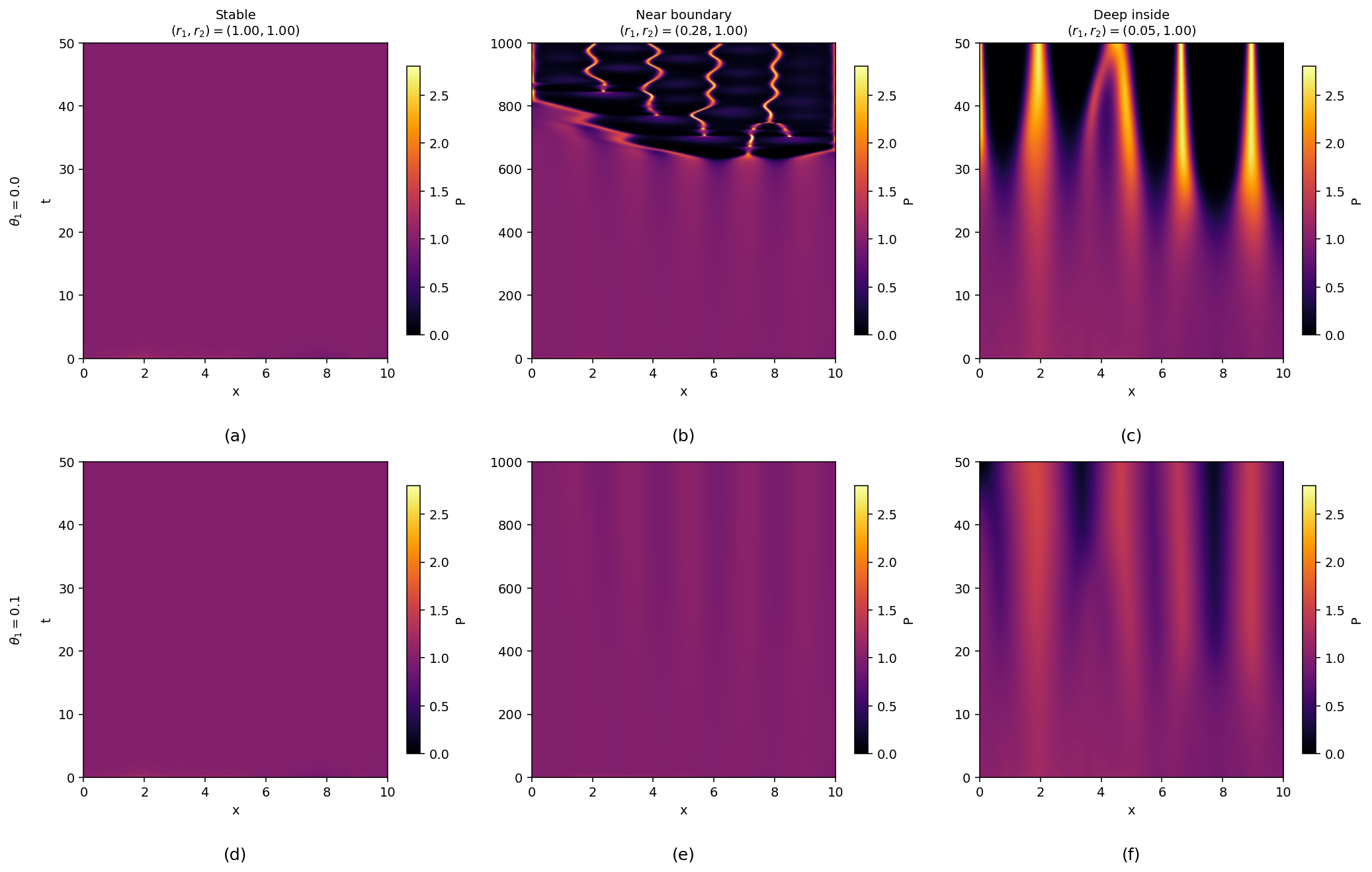}
\caption{Space--time evolution of $P(x,t)$: columns are the three
$(r_1,r_2)$ points of \cref{tab:pat_params}, rows are $\theta_1=0$ vs.\
$\theta_1=0.1$. The stable point decays to homogeneous in both rows,
confirming that $\theta_1$(p-Laplacian diffusion) does not alter the stability boundary; inside the unstable region, $\theta_1=0$
produces sharp and moving aggregation fronts, while
$\theta_1=0.1$ yields markedly smoother, bounded and stable patterns.}
\label{fig:1d_spacetime_all}
\end{figure}

\begin{figure}[htbp]
\centering
\includegraphics[width=0.80\textwidth]{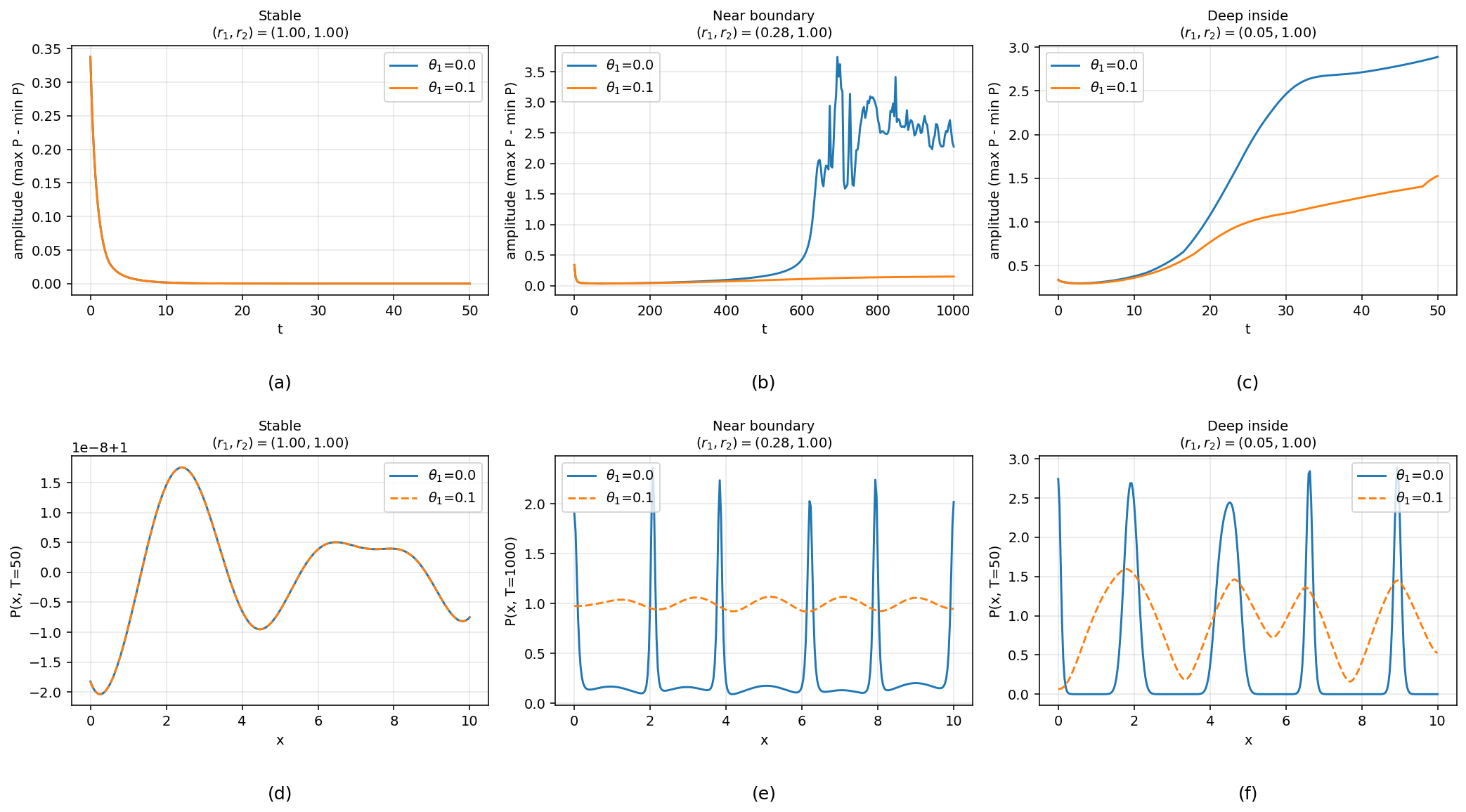}
\caption{(a-c) Amplitude ($\max P-\min P$) vs.\ time at the three
$(r_1,r_2)$ points. The near-boundary case
shows an initial transient decay (some fast-decaying modes excited by the
noise) followed by slow exponential growth and then oscillation during a long running time of 1000. And the deep-inside point shows a quick saturation of patterns at time 50. (d-f) Final spatial profiles $P(x,T)$: $\theta_1=0$ (solid) vs.\
$\theta_1=0.1$ (dashed).}
\label{fig:logistic_1d_amplitude_profile}
\end{figure}

In two spatial dimensions, the instability region is identical to 1D. Thus we repeat experiments in 2D for three representative points of \cref{tab:pat_params} as well. And \Cref{fig:2d_3pts} illustrates the quarter-annulus structure of the unstable set in $(k_x,k_y)$-space discussed in Remark \ref{remark_Turing_2D}. At points that are deep inside, simulations with $\theta_1=0.1$ gives rise to a connected pattern reminiscent of angiogenesis in \cref{fig:2d_3pts}(f), compared to $\theta_1=0$ which displays lots of spots. In our simulations, slow-diffusion could lead to smoother and more continuous patterns and prevent over-aggregation, and hence is more biologically meaningful.
\begin{figure}[htbp]
\centering
\includegraphics[width=0.80\textwidth]{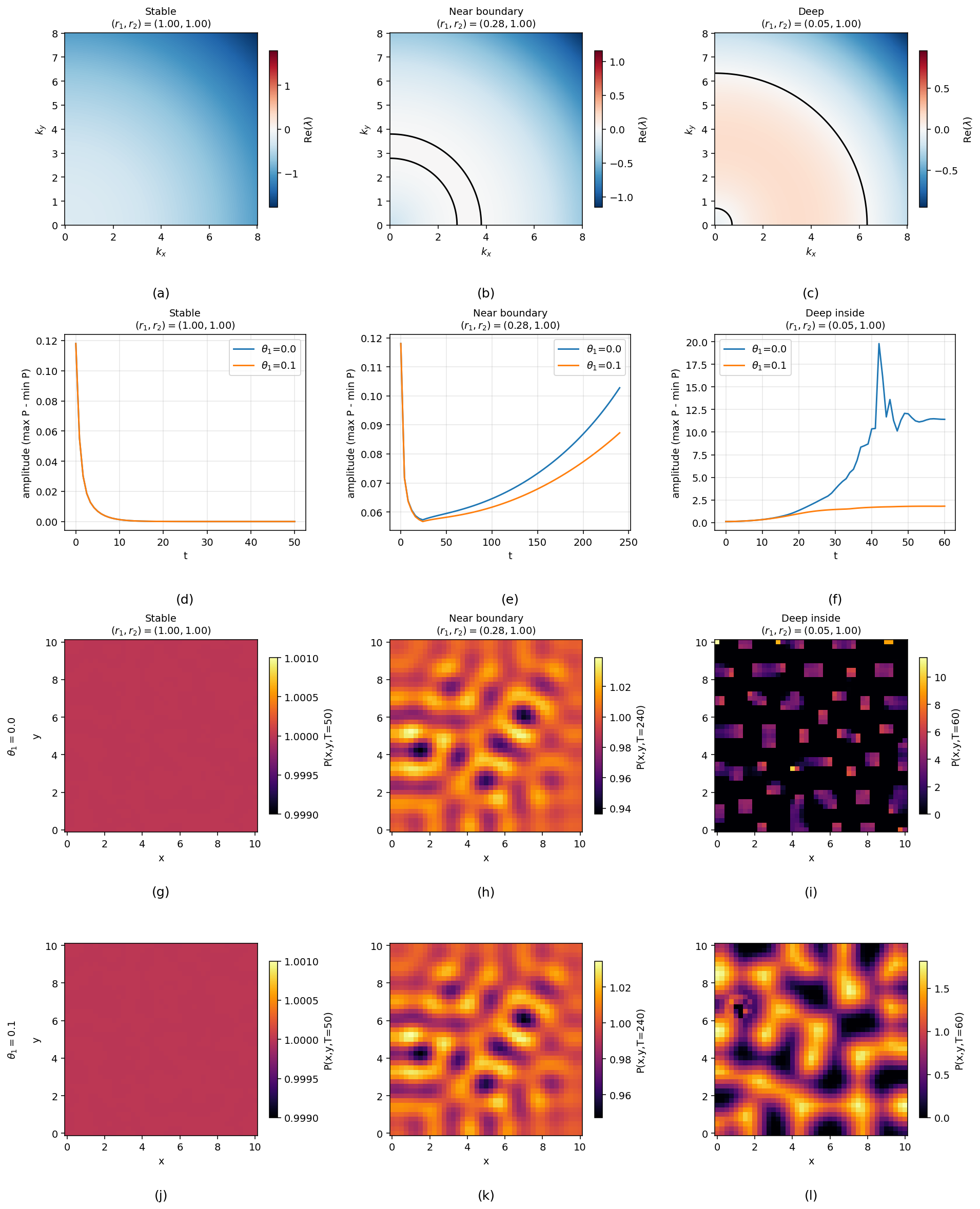}
\caption{(a-c) Unstable quarter-annulus in $(k_x,k_y)$-space for the three representative points in \cref{tab:pat_params}. The red area represents the unstable set. (d-f) 2D amplitude ($\max P-\min P$) vs.\ time. Evolution of 2D amplitudes resemble much as 1D in \cref{fig:logistic_1d_amplitude_profile}. (g-l) 2D simulations: final-time snapshots of $P(x,y,T)$
for the three points (columns) and $\theta_1=0$ vs.\ $\theta_1=0.1$
(rows). We run simulations until emergence of stable patterns.}
\label{fig:2d_3pts}
\end{figure}

\subsubsection{Effect of \texorpdfstring{$D_P,D_W$}{D\_P,D\_W} and Cell Kinetics}
Varying $D_P,D_W$ and fixing other parameters can change the fastest-growing wavenumber more directly than varying $(r_1,r_2)$, since these
coefficients also enter the $k^4$ terms of $\det(k^2)$ in
\eqref{eq:pat_det}.
\Cref{fig:pat_dpdw} shows four
$(D_P,D_W)$ combinations with slow diffusion (i.e. $\theta_1=0.1$). All four combinations evolve to bounded and well-connected patterns. The large diffusion on $W$ can lead to large wavelength and sparse network-like pattern (even coarse pattern). That is, strong random motility of VEGF does not attenuate chemotactic aggregation of $P$; rather, it renders the dynamics along the chemotactic direction more uniform, implying pattern of several dominant peaks (i.e. \Cref{fig:pat_dpdw}(b)) instead of multiple peaks (i.e. \Cref{fig:2d_3pts}(l)).

\begin{figure}[htbp]
\centering
\includegraphics[width=0.80\textwidth]{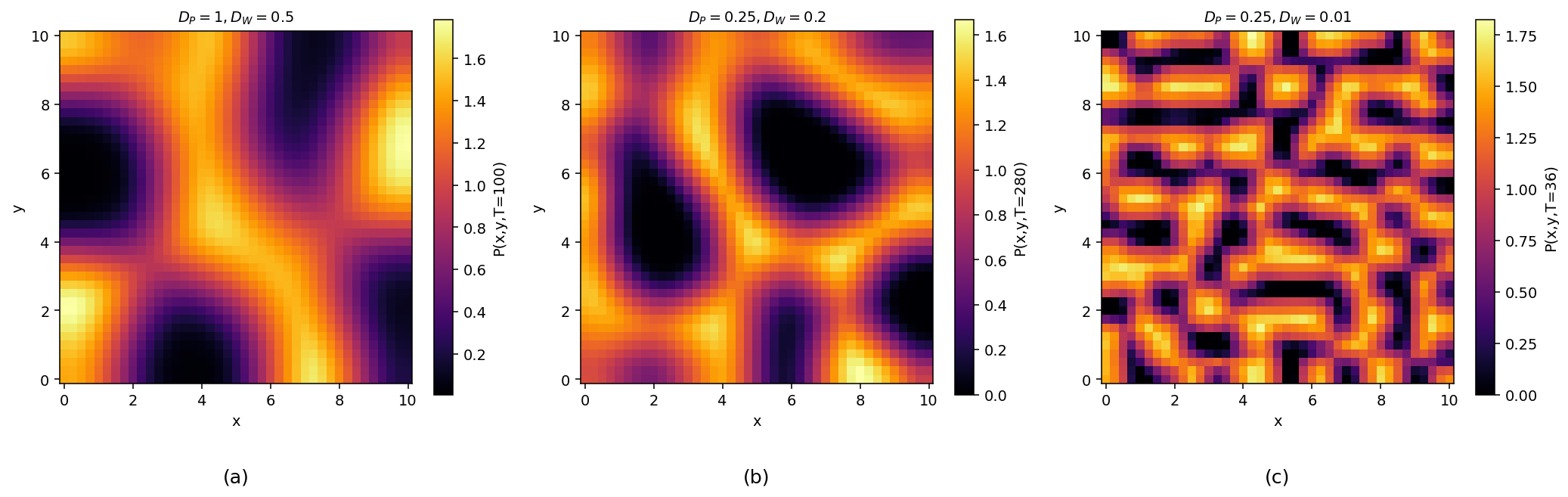}
\caption{2D Turing patterns: three combinations of $(D_P,D_W)$ with slow diffusion ($\theta_1=0.1$), and run simulations until emergence of stable patterns. Take $(r_1,r_2)$ as $(0.05,1.0)$, and other parameters align with \cref{tab:pat_params}. (a) $(D_P,D_W)=(1.0,0.5)$: Coarse pattern(one long tunnel connected with two short ones). (b) $(D_P,D_W)=(0.25,0.2)$: Multiple connected tunnels. (c) $(D_P,D_W)=(0.25,0.01)$: Labyrinth-like pattern.}
\label{fig:pat_dpdw}
\end{figure}

It is worth-noting that other cell kinetics, such as
$r_1WP(K-P)$($W$ dependent) and $r_1P(P-a)(K-P)$ (Allee-type, with $0<a<K$) are also reasonable. $r_1WP(K-P)$ indicates cell proliferation is also controlled by VEGF, which aligns with the fact that VEGF can stimulate growth of endothelial cells. And the Allee-type $r_1P(P-a)(K-P)$ incorporates the extreme case that local aggregation can deplete all the neighbor cells, due to its bistable
kinetics ($P=0$ stable, $P=a$ unstable, $P=K$ stable). However, numerical experiments (not shown here) indicate that these three cell kinetics do not differ a lot in saturating patterns. This implies that the chemotactic effect plays a dominant role in forming patterns, rather than cell kinetics.

\subsubsection{Patterns with $p<2$}
\label{sec:experiments:noiseamp}

The stability analysis of \S\ref{sec:experiments:stability} concerns
$p>2$, for which $\theta_1|P_x|^{p-2}$ vanishes at linear order about
the homogeneous state, so the instability conditions
are independent of any $\theta_1$. For $p<2$ this
is no longer true: $|P_x|^{p-2}$ \emph{diverges} as $P_x\to0$, so near the homogeneous background the $\theta_1$-term
behaves as an anomalously large diffusion coefficient, which greatly dampens the density $P$.

The same experiment, repeated on the 1D domain with the same
interior point in \cref{tab:pat_params}, $p=1.5$, $\theta_1\neq0$, and initial noise
amplitudes $\mathrm{amp}_0\in\{0.2,0.3,0.4,0.5,0.6\}$, shows that pattern emerges only when the noise amplitude is large enough in \cref{fig:fast_diffusion_1d_noiseamp_combined}. It is seen that the final profiles of $P$ share very similar bimodal aggregation behaviors as seen earlier with the one peak in the middle and another peak near the boundary. This indicates the form of noise controls saturating patterns, as long as the norm of initial $|P_x|$ exceed some critical value. The relationship between
initial noise level and shape of the forming pattern is therefore a threshold-like competition
between the singular small-gradient diffusion of $|P_x|^{p-2}$ and the underlying chemotactic instability.

\cref{fig:fast_diffusion_2d_noise_amplitude_combined} illustrates the 2D effect at the near-boundary point of \cref{tab:pat_params} with $p=1.5$ and $\theta_1=0.1$: band-limited noise of amplitude $\mathrm{amp}_0\in\{0.02,0.05,0.1,$ $0.15,0.2\}$ is
added to the homogeneous steady state and evolved to a fixed time $50$. And 2D simulations just align with the 1D case. The counterpart of a 2D annulus in \cref{fig:fast_diffusion_2d_noise_amplitude_combined}(b) is just the bimodal aggregation in 1D (See \ref{fig:fast_diffusion_1d_noiseamp_combined}(g)).

\begin{figure}[htbp]
\centering
\includegraphics[width=0.95\textwidth]{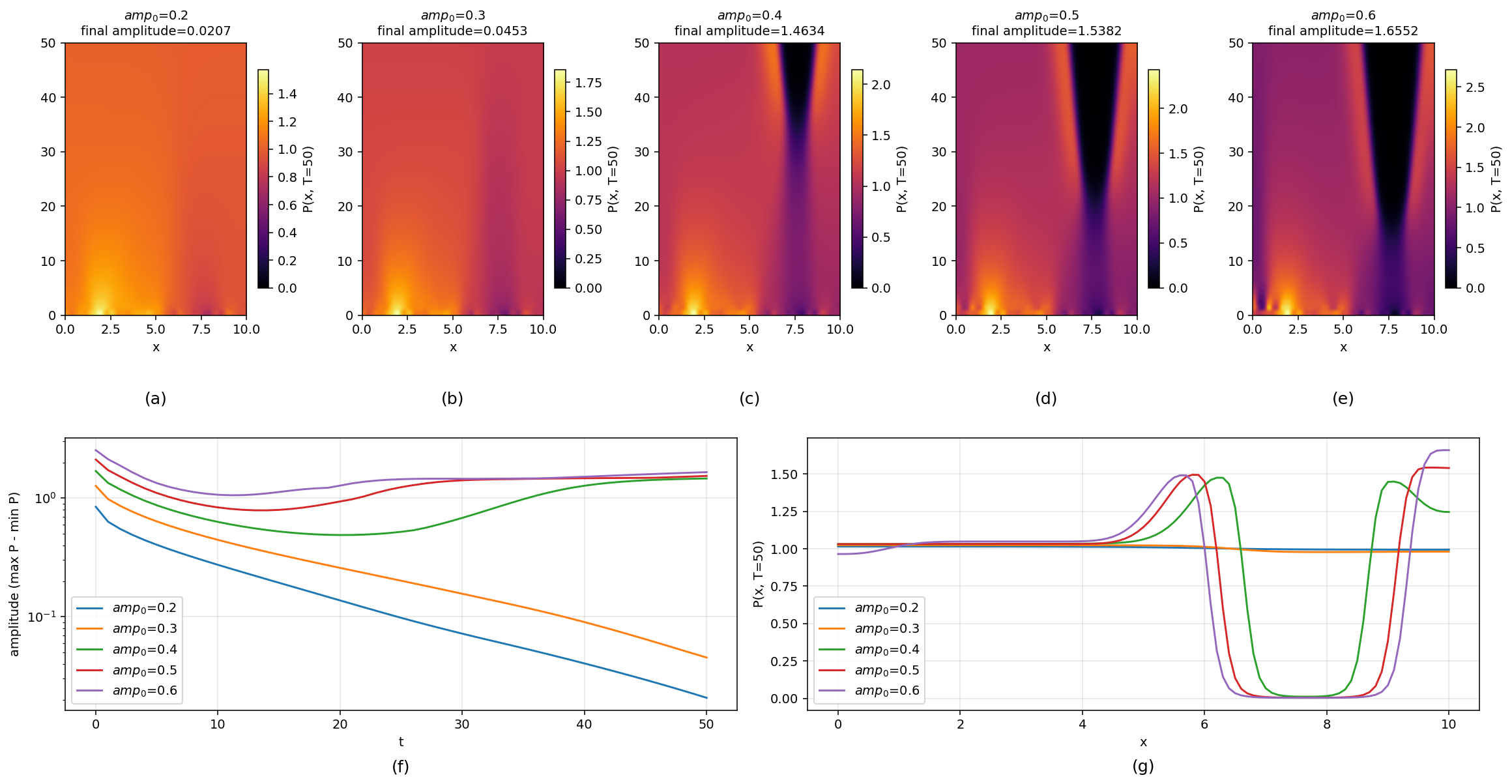}
\caption{(a-e) Space--time
evolution of $P(x,t)$ for the five initial noise amplitudes $amp_o$,
deep-inside point in \cref{tab:pat_params}, $p=1.5$, $\theta_1=0.1$. (f) Amplitude
($\max P-\min P$, log scale) vs.\ time for the five initial noise
levels. (g) Final profile of
$P$ for the five cases.}
\label{fig:fast_diffusion_1d_noiseamp_combined}
\end{figure}

\begin{figure}[htbp]
\centering
\includegraphics[width=0.80\textwidth]{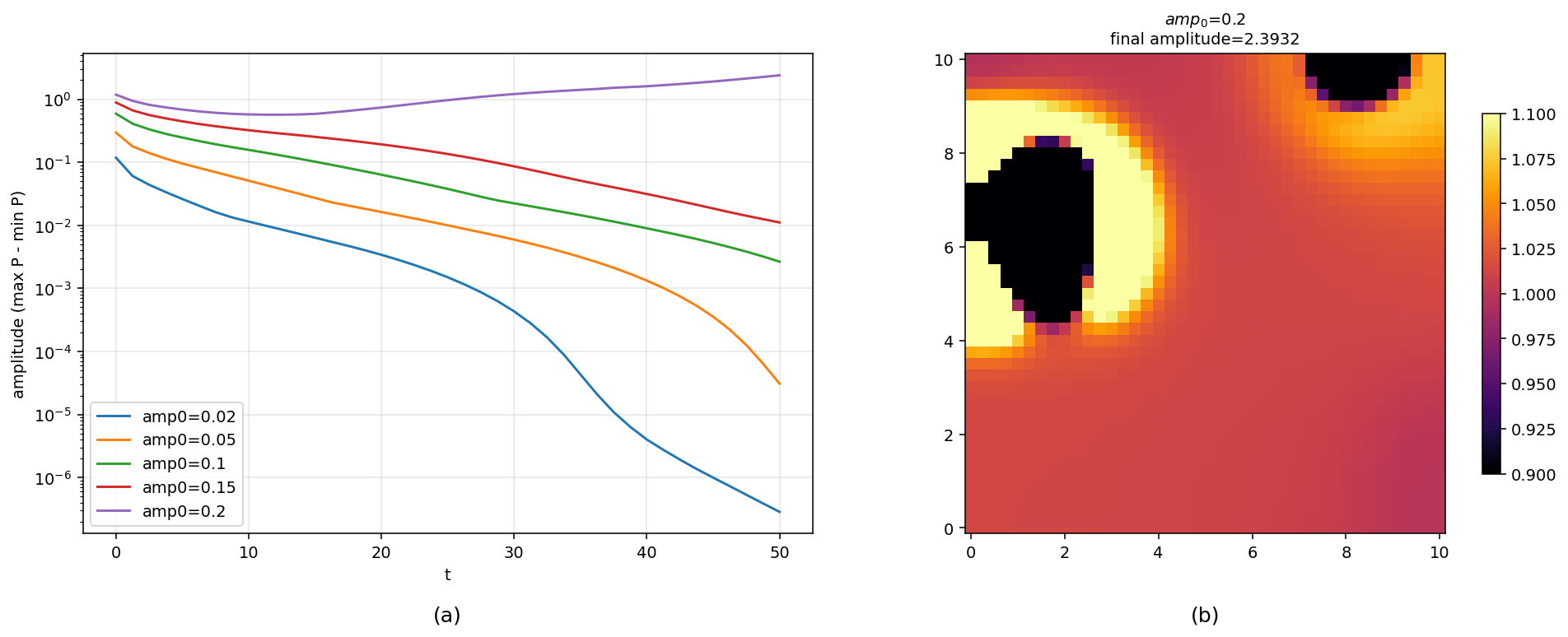}
\caption{(a) Amplitude ($\max P-\min P$, log scale) vs.\ time on 2D for five
initial noise amplitudes, deep-inside point in \cref{tab:pat_params}, $p=1.5$, $\theta_1=0.1$, and $T=50$.
The largest amplitude decays before reversing into growth; smaller
amplitudes decay for the entire simulation. (b) Final-time snapshot ($T=50$) for the initial noise
amplitude $\mathrm{amp}_0=0.2$.}
\label{fig:fast_diffusion_2d_noise_amplitude_combined}
\end{figure}

\section{Discussions}
\label{discussions}
Angiogenesis is a biological process that is key to tissue growth. Enhancing angiogenesis can assist biomedical applications to human disease, such as cancer and ischemic heart disease. Beyond the tumor setting, the mechanism analyzed here also underlies therapeutic angiogenesis in ischemic heart disease, where the goal is to promote rather than suppress collateral vessel formation; the present framework may help clarify which medical stimulation regimes
yield sustained, well-organized vascular networks (i.e. pattern forming and stabilizing). In the current manuscript we consider a system of PDE that models angiogenesis of cells, in response to VEGF.
The p-Laplacian motion we consider models both ``fast" and ``slow" movement, and can represent quick or slow growth, regression, regrowth and maturation of capillaries.
Our results on model \cref{eq:base_model} and its extension with logistic control \cref{eq:logistic} show novel rich dynamics in 1D and 2D spatial domains, that raise several questions for future research. The dynamics that we illustrate are distinct from other chemotaxis models to the best of our knowledge. For \cref{eq:logistic}, we show global existence of weak solution, for sufficiently small initial data. The linear analysis for instability conditions is explored as well. We find slow diffusion enhances the connection of tunnel-like structures in emerging patterns. Also, the bimodal behaviors due to fast diffusion perpetuates when gradients of initial distribution exceed some critical level (otherwise, the perturbations are dampened quickly). 

Based on our regularity results, there are two propositions for the dynamics that are observed:(i) The first posits the inhibition effect of slow diffusion to aggregation and shock formation - thus slow diffusion can prevent the initial distribution from forming shocks. 
(ii) The second posits bimodal aggregation from single-peak initial density distribution is possible due to fast diffusion - that is fast diffusion can increase cellular proliferation by developing bi-modal (as well as multi modal) spike dynamics. A rigorous proof of these dynamics is the object of future work. As concerns pattern formation, we observe that introduction of p-Laplacian has the potential to change the instability region, as compared to Fickian diffusion. This direction also requires further investigation.

The literature has not considered fast and slow diffusion, and the coupled effects of these with chemotaxis on the angiogenesis. We next mention several novel theoretical directions to be investigated with p-Laplacian diffusion, that is of signal depletion type \cite{lankeit2023review, arumugam2021keller}, in our future work - based on some of our simulations, that show blow-up prevention, via the p-Laplacian. In general we consider,

\begin{equation}
\label{eq:1m}
P_{t} = d_{1}\nabla \cdot (|\nabla P|^{p-2} \nabla P) -  \chi_{1}\nabla \cdot \left(P f_{1}(P)\nabla c\right) +  h_{1}(P), \ 
 W_{t}= d_{1}\Delta W - \lambda_{1}f(W)P + S(W)
\end{equation}

Here $1<p<\infty$. Also, this is equipped with appropriate initial and Neumann boundary conditions.
For logistic growth, ($h_{1}(P)=P(1-P/K)$), systems of type \eqref{eq:1m} possess classical solutions for all initial conditions in $n=2$, and weak solutions in $n \geq 3$ \cite{tello2007chemotaxis}. If one weakens control on $h_{1}$ (say $h_{1}(P)=P$), then finite time blow-up is possible, ($\lim_{t \rightarrow T^{*} < \infty} ||P||_{L^{\infty}(\Omega)} \rightarrow + \infty$), even in $n=2$ \cite{horstmann2005boundedness}. Preventing explosion of solutions or blow-up in such models has had quite a bit of recent activity \cite{surulescu2021does}. Damping mechanisms, have considered depletion terms typically of the form $f(W) = W^{q}, q\geq 1$. The case of ``fast" mechanisms ($0<q<1$, and $1<p<2$), in inhibiting blow up remains under explored. Akin to our recent work \cite{upadhyay2026eco, parshad2026effect}. Hence we conjecture,

\begin{conjecture}
    \label{con:cc}
    Consider \eqref{eq:1m} with $f(W)=W^{q}$, $h_{1}(P)=P$. Then for certain initial conditions $(P_{0},W_{0}) \in L^{\infty}(\Omega)$, and suitably chosen $0<p<1$, solutions remain bounded for all time. 
\end{conjecture}

In the classical setting, $f_{1}(P)=P^{\beta -1}$, and if $0 < \beta < \frac{2}{n}$, then solutions are global and bounded, and if $\beta \geq \frac{2}{n}, n \geq 2$, finite time blow-up can occur \cite{horstmann2005boundedness, horstmann20041970, levine1997siam}. We propose to investigate via the earlier conjecture, several novel methods to prevent blow-up.

The proposed mathematical model will establish the foundation for digital twin modeling in cardiac vascular regeneration.
Note, a ``digital twin" is a model of a physical system that uses real-time data to accurately reflect its real-world counterpart’s behavior. This mathematical model will predict the vasculature architecture with respect to the corresponding VEGF concentration field. This can be assimilated into a digital twin platform that will also combine  time-series imaging and genetic data into a dynamic, continuously updating model of angiogenesis. Unlike static or sequential models, the digital twin framework will couple multiscale simulations with time-series data, enabling predictive modeling of vascular trajectory and reverse engineering the precise genetic interventions required to achieve the targeted vasculature morphology. All in all our goal is to use the currently introduced class of models as a tool within the digital twin framework, to improve human cardiac health.


\section*{Acknowledgments}
WZ, HA, AP, and RP acknowledge valuable partial support from the National Science
Foundation (Grant No. DMS-2533961).

\bibliographystyle{siamplain}
\bibliography{references}
\end{document}